\documentclass[]{siamart171218}


\usepackage{amsfonts,amsmath,amssymb,bm}
\usepackage{color,cases,float}
\usepackage{graphicx}
\usepackage{hyperref}
\usepackage{psfrag}
\usepackage{eucal}
\usepackage{algorithm}
\usepackage{overpic}
\usepackage{subfigure}
\usepackage{url}
\usepackage[all,cmtip]{xy}
\definecolor{darkblue}{rgb}{0.0,0.0,0.6}
\hypersetup{colorlinks,breaklinks,
	linkcolor=darkblue,urlcolor=darkblue,
	anchorcolor=darkblue,citecolor=darkblue}

 \newtheorem{cor}{Corollary}
 \newtheorem{prop}{Proposition}
 \newtheorem{assum}{Assumption}
 \newtheorem{thrm}{Theorem}
 \newtheorem{lem}{Lemma}

\def\0{{\bf 0}}
\def\1{{\bf 1}}

\def\bes{\begin{equation*}}
	\def\ees{\end{equation*}}
\def\be{\begin{equation}}
	\def\ee{\end{equation}}
\def\bas{\begin{eqnarray*}}
	\def\eas{\end{eqnarray*}}
\def\ba{\begin{eqnarray}}
	\def\ea{\end{eqnarray}}
\def\bma{\begin{bmatrix}}
	\def\ema{\end{bmatrix}}
\def\bmx{\begin{matrix}}
	\def\emx{\end{matrix}}
\def\ben{\begin{enumerate}}
	\def\een{\end{enumerate}}
\def\bit{\begin{itemize}}
	\def\eit{\end{itemize}}
\def\bet{\begin{tabular}}
	\def\eet{\end{tabular}}

\def\e{\epsilon}
\def\de{\delta}

\newcommand{\tat}[1]{{\leavevmode\color{black}#1}}

\def\R{\mathbb{R}}
\def\d{\delta}
\def\la{\langle}
\def\ra{\rangle}
\def\b{{\beta}}
\def\a{\alpha}
\def\g{\gamma}

\def\dist{{\rm dist}}


\begin{document}


\ifpdf
  \DeclareGraphicsExtensions{.eps,.pdf,.png,.jpg}
\else
  \DeclareGraphicsExtensions{.eps}
\fi


\newsiamremark{remark}{Remark}
\newsiamremark{hypothesis}{Hypothesis}
\crefname{hypothesis}{Hypothesis}{Hypotheses}
\newsiamthm{claim}{Claim}

\headers{A Smooth Inexact Penalty Reformulation of Convex Problems}{T. Tatarenko and A. Nedi\'c}

\title{A Smooth Inexact Penalty Reformulation \\ of Convex Problems 
with Linear Constraints\thanks{Submitted to the editors on 22.08.2018}}

\author{Tatiana Tatarenko\thanks{Department of Control Theory and Robotics, TU Darmstadt, Germany
  (\email{tatarenk@rmr.tu-darmstadt.de}).}
\and Angelia Nedi\'c\thanks{The School of Electrical, Computer and Energy Engineering at Arizona State University, USA
  (\email{Angelia.Nedich@asu.edu}).}
}



\ifpdf
\hypersetup{
  pdftitle={A Smooth Inexact Penalty Reformulation\\ of Convex Problems
with Linear Constraints},
  pdfauthor={Tatarenko, Nedi\'c}
}
\fi




\maketitle
\begin{abstract}
 In this work, we consider a constrained convex problem with linear inequalities and provide an inexact
penalty re-formulation of the problem.
The novelty is in the choice of the penalty functions, which are
smooth and can induce a non-zero penalty over some points in feasible region of the original constrained problem. The resulting unconstrained penalized problem is parametrized by two penalty parameters
which control the slope and the curvature of the penalty function.
With a suitable selection of these penalty parameters,
we show that the solutions of the resulting penalized unconstrained
problem are  \emph{feasible} for the original constrained problem, under some assumptions.
Also, we establish that, with suitable choices of penalty parameters,
the solutions of the penalized unconstrained problem can achieve a suboptimal value
which is arbitrarily close to the optimal value of the original constrained problem.
For the problems with a large number of linear inequality constraints,
a particular advantage of such a smooth penalty-based reformulation is that
it renders a penalized problem suitable for the implementation
of fast incremental gradient methods, which
require only one sample from the inequality constraints at each
iteration. We consider applying SAGA proposed in~\cite{saga}
to solve the resulting penalized unconstrained problem.
\tat{Moreover, we propose an alternative approach to set up the penalized problem. This approach is based on the time-varying penalty parameters and, thus, does not require knowledge about some problem-specific properties, that might be difficult to estimate. We prove that the single-loop full gradient-based algorithm applied to the corresponding time-varying penalized problem converges to the solution of the original constrained problem in the case of the strongly convex objective function.}
\end{abstract}

\begin{keywords}
  Convex minimization, linear constraints, inexact penalty, incremental methods
\end{keywords}

\begin{AMS}
  90C25, 90C06, 65K05
\end{AMS}

\section{Introduction}
In  this  paper,  we  study  the  problem  of minimizing  a
{\it convex} function $f:\R^n\to\R$ over a convex and closed set $X$
that is the intersection of finitely many {\it convex and closed sets} $X_i$, $i=1,\ldots,m$ ($m\ge 2$ is large),
i.e.,
\ba\label{eq:gprob}
&&\hbox{minimize \ \,} \quad  f(x) \cr
&&\hbox{subject to } \quad  x\in X = \cap_{i=1}^m X_i.
\ea
Throughout the paper, the function $f$ is assumed to be {\it convex} over $\R^n$.
Optimization problems of the form~\eqref{eq:gprob} arise in many areas of research,
such as digital filter settings in communication systems \cite{filter}, energy consumption in Smart Grids \cite{SmartG}, convex  relaxations of  various  combinatorial  optimization  problems in machine learning applications \cite{clustering, matching}.

Our interest is in case when $m$ is large, which prohibits us from using
projected gradient and
augmented Lagrangian methods~\cite{BertsekasConstrOpt},
which require either computation of the (Euclidean) projection or
an estimation of the gradient for the sum of many functions, at each iteration.
To reduce the complexity, one may consider a method that operates on a  single set $X_i$
from the constraint set collection $\{X_1,\ldots,X_m\}$ at each iteration.
Algorithms using random constraint sampling for general convex optimization problems~\eqref{eq:gprob}  have been first considered  in~\cite{Nedich2011} and were extended in~\cite{WangBerts} to a broader class of randomization over the sets of constraints. Moreover, the convergence rate analysis is performed
in~\cite{WangBerts} to demonstrate that the  feasibility  error  diminishes  to  zero  at  a  rate
$O(\log k/ k)$, whereas the optimality error diminishes to zero with the rate of $O(1/\sqrt k)$. For the general convex problems of type~\eqref{eq:gprob},
the latter rate is optimal over the class of optimization methods based on noisy first-order information.

A special case of the problem~\eqref{eq:gprob} with $f\equiv 0$ is a \emph{feasibility} problem,
for which random sampling methods have been considered in~\cite{Polyak2001} for the case of the sets given by convex inequalities,
and in~\cite{CalafiorePolyak2001} for a more specialized case of linear matrix inequalities.
In~\cite{Nedic-cdc-2010}, a connection between the convergence properties of stochastic gradient methods and the existence of solutions for problem~\eqref{eq:gprob} has been studied, and a linear convergence rate has been established for some special cases of the constraint sets $X_i$ (such as those admitting easily computable Euclidean projections). Algorithms with the linear convergence to a solution of feasibility problems defined by a system of linear equations and inequalities have been considered in~\cite{Leventhal, Strohmer}.
An iterated randomized projection scheme for systems of linear equations is proposed in~\cite{Strohmer}, which is a randomized variant of Kaczmarz's  method. This variant employs a single projection per each iteration and is shown to converge with the linear rate that does not depend on the number of equations, but instead, depends on the condition number associated with the linear system of equations.

A possible reformulation of problem~\eqref{eq:gprob} is through the use of the indicator functions of
the constraint sets, resulting in the following unconstrained problem
\be\label{eq:reform}
\min_{x\in\R^n}\sum_{i=1}^m \left\{\frac{1}{m}f(x) + \chi_i(x)\right\},
\ee
where $\chi_i(\cdot):\R^n\to\R\cup\{+\infty\}$ is the indicator function of the set $X_i$
(taking value $0$ at the points
$x\in X_i$ and, otherwise, taking value $+\infty$).
The advantage of this reformulation is that the objective function is the sum of convex functions and incremental methods can be employed that compute only a (sub)-gradient of one of the component functions at each iteration.
The traditional incremental methods do not have memory, and their origin can be traced back to work of Kibardin~\cite{Kibardin}. They have been studied for
smooth least-square problems~\cite{Ber96,Ber97,Luo91}, for training the neural networks~\cite{Gri94,Gri00,LuT94}, for smooth convex problems~\cite{Sol98,Tse98} and
for non-smooth convex problems~\cite{GGM06,HeD09,JRJ09,Kiw04,NeB00,NeB01,NeBBor01,Wright08}
(see~\cite{BertsekasPenalty} for a more comprehensive survey of these methods).
These traditional memoryless incremental methods (randomized and deterministic),
while simple to implement to solve problem~\eqref{eq:reform},
cannot achieve the optimal convergence rate even when $f$ is smooth and strongly convex.
This is due to the non-smoothness of the indicator functions and
the errors that are accumulated during the incremental processing of the functions in the sum~\cite{BertsekasPenalty}.

Reformulation~\eqref{eq:reform} has been considered in~\cite{Kundu2017} as a departure point
toward an exact penalty reformulation using the set-distance functions, thus yielding a penalized problem
of the following form:
\be\label{eq:reformPen}
\min_{x\in\R^n} \left\{f(x) + \lambda h_P(x)\right\},
\ee
where
\[h_P(x)=P(\dist(x,X_1),\ldots,\dist(x,X_m)),\]
with $P$ being some norm in $\R^m$ and $\dist(\cdot,Y)$ being the distance function to a set $Y$.
This exact penalty formulation has been motivated by a simple exact penalty model proposed in~\cite{Bertsekas2011} (using only the set-distance functions)
and a more general penalty model considered in~\cite{BertsekasPenalty}.
In~\cite{Kundu2017}, a lower bound on the penalty level  $\lambda$
has been identified guaranteeing that the optimal solutions of the penalized problem are also optimal solutions of the original problem~\eqref{eq:reform}. However, the proposed approaches in~\cite{Kundu2017} do not utilize
incremental processing, but rather approaches where a full (sub)-gradient of the function
objective in~\eqref{eq:reformPen} is used.

Unlike~\cite{Kundu2017}, our objective in this paper is to consider a penalty-based reformulation of problem~\eqref{eq:gprob} (with linear constraints)
that will allow us to take advantage of the penalized problem structure for the use of incremental methods.
In order to achieve the optimal convergence rates,
we would like to depart from the traditional incremental methods. In particular, we would like to have
a penalty reformulation of problem~\eqref{eq:gprob} that will enable us to employ one of
recently developed {\it fast incremental} algorithms. These algorithms are designed
to solve optimization problems involving a large sum of functions~\cite{saga,svrg,SAG}
which arise in machine learning applications. Unlike the traditional incremental methods that are memoryless,
these fast incremental algorithms require storage of the past (sub)-gradients. Typically, they require storing
the same number $m$ of the (sub)-gradients as the number $m$ of the component functions in the objective\footnote{An approach for memory reduction is proposed, for example in~\cite{svrg}.}.
The stored information is effectively used to control the error due to the incremental processing of the functions,
which in turn allows these algorithms to achieve optimal convergence rates.
A drawback of the fast incremental algorithms, such as SAGA and its various modifications~\cite{Katusha,finito,svrg,scda,accSAGA}, is that
they are not designed to efficiently handle a possibly large number of constraints.
At most, these algorithms allow us to deal with so called
{\it composite optimization} problems, where the composite term corresponds to a regularization function
promoting some special properties of model parameters and has
a simple structure for determining the proximal point~\cite{saga}.

{\it Our focus is on problem~\eqref{eq:gprob} with linear constraints},
\[X_i=\{x\in\R^n \mid \la a_i, x\ra - b_i\le 0\},\]
where $a_i \in \R^n$ and $b_i\in \R$ for all $i=1,\ldots,m$.
Our objective is to develop a penalty model for this problem that will allow us to implement {\it fast incremental} methods~\cite{saga, svrg,SAG} to solve the resulting unconstrained penalized problem.
In order to do so, we will develop a smooth penalty framework motivated by the approach in~\cite{BertsekasPenalty}, and
provide the relations for the solutions of the optimization problem with linear constraints and the solutions of
the corresponding penalized problem.
We consider a penalized reformulation of problem~\eqref{eq:problem} in the following form:
\ba\label{eq:pen-problem0}
&&\hbox{minimize \ \,} \quad  f(x) + \frac{\gamma}{m} \sum_{i=1}^{m} h_\d\left(x; a_i, b_i\right) \cr
&&\hbox{subject to } \quad \ x\in\R^n,
\ea
where
the function $h_\d\left(x; a, b\right)$ is a {\it smooth} penalty function associated
with a linear inequality constraint $\la a,x \ra - b\le 0$,
while $\d\ge0$ and $\gamma>0$ are the penalty parameters. The penalty parameters will control
the slope and the curvature of the penalty function
$\frac{\gamma}{m} \sum_{i=1}^{m} h_\d\left(x; a_i, b_i\right).$
The novelty is in the use of inexact smooth
penalty function $h_\d\left(x; a, b\right)$ that has Lipschitz continuous gradients, which are
not related to the squared set-distance function, which is
in contrast to the inexact distance-based smooth penalties considered in~\cite{Siedlecki}.
Also, this is in contrast with the use of non-smooth exact penalty functions in~\cite{BertsekasPenalty}.
A key property of our penalty framework is its accuracy guarantee, as follows:
For a given accuracy $\delta^0>0$, we show that there exists a range of values
for parameters $\d$ and $\g$ such that
any optimal solution of the penalized problem~\eqref{eq:pen-problem0}
is \emph{feasible} for the original linearly constrained problem.
Moreover, we provide estimates that characterize sub-optimality of the solutions of the penalized problem, i.e.,
we show that the solutions are located within the $\delta^0$-neighborhood of the solutions of the original
constrained problem.

\tat{These properties of the penalized problem allow us to apply any
fast incremental method~\cite{saga, svrg,SAG}. We will employ SAGA to solve the smooth penalized problem
to obtain a suboptimal point
with the sublinear rate $O(1/k)$ in the case of smooth convex function $f$ and
the linear rate $O(q^k)$, with $q<1$, in the case of smooth strongly convex $f$.
However, to guarantee these convergence properties, we need to know some problem specific parameters. These parameters might be difficult to estimate in practice. That is why we also consider an alternative way to define the penalized problem and propose a way to choose time-dependent penalty parameters.

The paper is organized as follows. In Section~\ref{problem-properties}, we formulate the penalized problem,
establish some properties of the chosen penalty function and provide some elementary relation between the penalized problem and the original constrained problem.
In Section~\ref{solution-relation}, we investigate the relation of the solutions of the original problem and its penalized variant. In Section~\ref{sec:incr} we consider applying an existing fast incremental method, namely SAGA, for solving the penalized problem. In Section~\ref{sec:varpar}, we provide an alternative approach to set up time-varying parameters such that the single-loop full gradient-based method applied to the penalized problem converges to the original solution as time runs.
We conclude the paper in Section~\ref{sec:concl}.}

\section{Penalized Problem and its Properties}\label{problem-properties}
We consider the following optimization problem:
\ba\label{eq:problem}
&&\hbox{minimize \ \,} \quad  f(x) \cr
&&\hbox{subject to } \quad  \la a_i,x\ra - b_i\le 0, \  i=1,\ldots,m,
\ea
where the vectors $a_i$, $i=1,\ldots, m$, are nonzero.
We will assume that the problem is {\it feasible}.
Associated with problem~\eqref{eq:problem}, we
consider a penalized problem
\ba\label{eq:pen-problem}
&&\hbox{minimize \ \,} \quad  F_{\g\d}(x) \cr
&&\hbox{subject to }  \quad x\in\R^n,
\ea
where
\begin{equation}\label{eq:penfun}
F_{\g\d}(x) = f(x) + \frac{\gamma}{m} \sum_{i=1}^{m} h_\d\left(x; a_i, b_i\right).\end{equation}
Here, $\gamma>0$ and $\d\ge0$ are penalty parameters.
The vectors $a_i$ and scalars $b_i$ are the same as those characterizing the constraints in problem~\eqref{eq:problem}.
For a given nonzero vector $a\in\R^n$ and $b\in\R$, the penalty function
$h_\d(\cdot;a,b)$ is given by (see also Figure~\ref{fig:penalty})\footnote{A version of the one-sided Huber losses~\cite{Huber}.}
\begin{align}\label{eq:hfun}
h_\delta(x; a,b) = \begin{cases}
\frac{\la a,x\ra -b}{\|a\|}, &\text{ if } \quad \la a,x\ra - b>\de,\\
\frac{(\la a,x\ra -b + \de)^2}{4\de\|a\|}, &\text{ if } \quad -\de\le\la a,x\ra - b\le\de,\\
 0, &\text{ if } \quad \la a,x\ra - b<-\de.
\end{cases}
\end{align}
For any $\d\ge0$, the function $h_\d(x;a,b)$ satisfies the following relations:
\begin{align}\label{eq:hfunineq}
h_\delta(x; a,b) \ge 0\qquad\hbox{for all }x\in\R^n,
\end{align}
\begin{align}\label{eq:hfunineq1}
h_\delta(x; a,b) \le \frac{\delta}{4\|a\|},\qquad\hbox{when }\la a,x\ra\le b,
\end{align}
\begin{align}\label{eq:hfunineq2}
h_\delta(x; a,b) > \frac{\delta}{4\|a\|},\qquad\hbox{when }\la a,x\ra> b.
\end{align}
\begin{figure}[!t]
\centering
\psfrag{-0.5}[c][l]{\scriptsize{$-0.5$}}
\psfrag{0}[c][l]{\scriptsize{$0$}}
\psfrag{0.5}[c][l]{\scriptsize{$0.5$}}
\psfrag{1}[c][l]{\scriptsize{$1$}}
\psfrag{1.5}[c][l]{\scriptsize{$1.5$}}
\psfrag{2}[c][l]{\scriptsize{$2$}}
\psfrag{h}[c][l]{{\Large{$h_{\delta}$}}}
\psfrag{x}[c][b]{\Large{$x$}}
\begin{overpic}[width=1\linewidth]{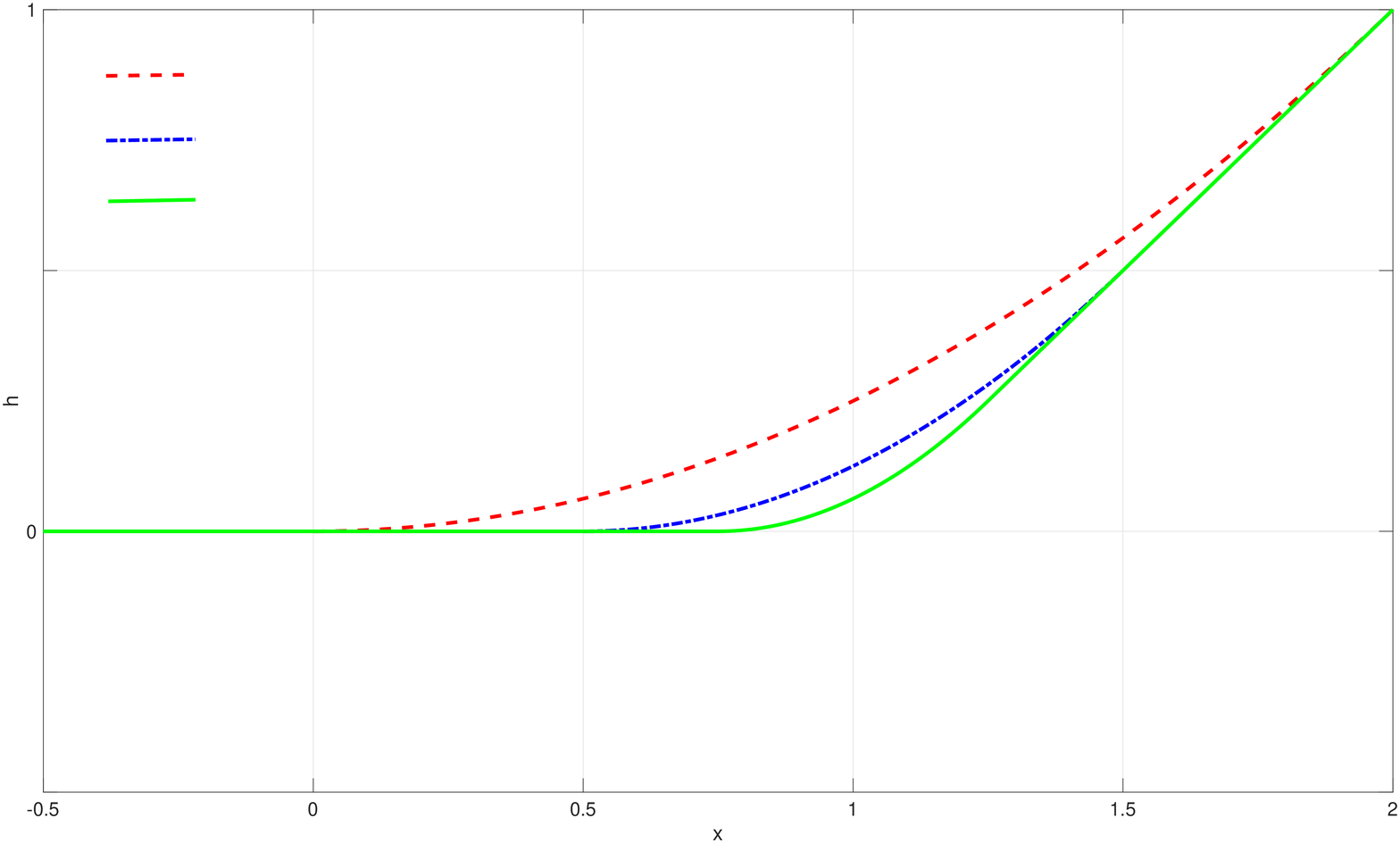}
\put(23,46.5){\scriptsize{$\delta=1$}}
\put(23,43){\scriptsize{$\delta=\frac12$}}
\put(23,39.5){\scriptsize{$\delta=\frac14$}}
\end{overpic}
\caption{Penalty functions $h_\delta(x;1,1)$ for the constraint $x-1\le 0$, $x\in\R$,
with $\d\in\left\{\frac{1}{4},\frac{1}{2},1\right\}$.}
\label{fig:penalty}
\end{figure}

Observe that $h_\delta(x; a,b)$ can be viewed as a composition of
a scalar function
\begin{align}\label{eq:sfun}
p_\d(s)= \begin{cases}
s, &\text{ if } \quad s>\d,\\
\frac{(s + \de)^2}{4\d}, &\text{ if } \quad -\de\le s \le\de,\\
 0, &\text{ if } \quad s<-\de,
\end{cases}
\end{align}
with a linear function  $x\mapsto \la a,x\ra-b$, which is scaled by $\frac{1}{\|a\|}$.
In particular, we have
\be\label{eq:handp}
h_\delta(x; a,b) =\frac{1}{\|a\|}p_\d(\la a,x\ra-b).\ee
The function $p_\d(s)$ is convex on $\R$ for any $\delta\ge0$.
Thus, the function $h_\d(x;a,b)$ is convex on $\R^n$, implying that
the objective function~\eqref{eq:penfun} of the penalized problem~\eqref{eq:pen-problem}
is convex over $\R^n$ for any $\delta\ge0$ and $\g>0$.

Furthermore, observe that the function
$p_\d(\cdot)$ is twice differentiable for any $\d>0$,
with the first and second derivative given by
\begin{align}\label{eq:pderiv}
p'_\d(s)= \begin{cases}
1, &\text{ if } \quad s>\d,\\
\frac{(s + \de)}{2\d}, &\text{ if } \quad -\de\le s \le\de,\\
 0, &\text{ if } \quad s<-\de,
\end{cases}
\end{align}
and
\begin{align*}
p''_\d(s) = \begin{cases}
\frac{1}{2\d}, &\text{ if } \quad -\d\le s\le\d,\\
 0, &\text{ if } \quad s<-\de \quad \text{or}\quad s>\d.
\end{cases}
\end{align*}
Thus, the function $p(s)$ has Lipschitz continuous derivatives with constant $\frac{1}{2\d}$.
Then, the function $h_\delta(\cdot; a,b)$ is differentiable for any $\d>0$ and
its gradient is given by
\be\label{eq:gradh}
\nabla h_\delta(x; a,b) =\frac{1}{\|a\|}\,p'_\d(\la a,x\ra-b) a,\ee
which is Lipschitz continuous with a constant $\frac{\|a\|}{2\d}$,
\be\label{eq:Lipc-gradh}
\|\nabla h_\delta(x; a,b) -\nabla h_\delta(y; a,b)\| \le \frac{\|a\|}{2\d}\,\|x-y\|\qquad\hbox{for all }x,y\in\R^n.\ee
In view of the definition of the penalty function $F_{\g\d}$ in~\eqref{eq:penfun}
and relation~\eqref{eq:gradh},
we can see that the magnitude of the ``slope" of the penalty function is controlled by the parameter $\g>0$,
while the ratio of the parameters $\g$ and $\d$ is controlling the ``curvature" of the penalty function.

\tat{Note that the proposed function $h_\delta(x; a,b)$ penalizes some points in the feasible region. This property allows us to deal with a smooth penalized problem. Moreover, since the penalty function is smaller in the feasible region than outside of it, by adjusting the smoothing parameter $\d$ and the penalty parameter $\gamma$ one can pursue the goal to obtain a feasible point while solving the corresponding penalized problem.
Thus, our choice of the penalty function is motivated by a desire to have the smooth penalized problem~\eqref{eq:pen-problem} with the minimizers being feasible
for the original problem~\eqref{eq:problem}.}
It is worth noting that the penalty function proposed above is a version of the one-sided Huber losses.
Originally, the Huber loss functions were introduced in applications of robust regression models
to make them less sensitive to outliers in data in comparison with the squared error loss~\cite{Huber}.
In contrast,
we use this type of penalty function to smoothen the exact penalties based on the distance to the sets $X_i$
proposed in~\cite{BertsekasPenalty}.
Furthermore, an appropriate choice of the parameter $\delta\ge0$ allows us to overcome the limitation
of the smooth penalties based on the squared distances to the sets $X_i$, which typically
provide an infeasible solution (for the original problem), due to a small penalized value around an optimum lying close to the feasibility set boundary~\cite{Siedlecki}.

In what follows, we let $\Pi_Y [x]$ denote the (Euclidean) projection of a point $x$ on a convex closed set $Y$,
so we have
\[\dist(x, Y) = \|x - \Pi_{Y} [x]\|.\]

The following lemma provides some additional
properties of the penalty function $h_\d(x;a,b)$ that we will use later on.
In fact, the lemma shows stronger results than what we will use, but the results may be of their own interest.

\begin{lem}\label{lem:penalty}
Given a nonzero vector $a\in\R^n$ and a scalar $b\in\R$, consider the penalty function $h_\d(x;a,b)$ defined in~\eqref{eq:hfun} with $\d\ge0$.
Let $Y=\{x\mid \la a,x\ra-b\le0\}.$
Then, we have for $\d=0$,
\[h_0(x;a,b)=\dist(x,Y) \qquad\hbox{for all $x\in\R^n$},\]
and for any $0<\d \le \d'$,
\[h_\d(x;a,b) \le h_{\d'}(x;a,b) \quad\hbox{for all $x\in\R^n$}.\]
\end{lem}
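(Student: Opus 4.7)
The plan splits naturally according to the two claims. For the $\d = 0$ identity, I would first unwind the definition in~\eqref{eq:hfun} at $\d = 0$: the middle interval $-\d \le \la a,x\ra - b \le \d$ collapses to the boundary $\{\la a,x\ra = b\}$, and the remaining two pieces give $h_0(x;a,b) = \|a\|^{-1}\max\{0,\la a,x\ra - b\}$. The one substantive step is then to identify this with $\dist(x, Y)$. I would invoke the standard half-space projection formula $\Pi_Y[x] = x - \|a\|^{-2}\max\{0,\la a,x\ra - b\}\,a$, which is verified directly by noting that this point lies in $Y$ and that $x - \Pi_Y[x]$ is normal to the bounding hyperplane whenever $\la a,x\ra > b$. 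Taking norms yields $\|x - \Pi_Y[x]\| = \|a\|^{-1}\max\{0,\la a,x\ra - b\}$, matching $h_0(x;a,b)$ exactly.

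For the monotonicity in $\d$, I would exploit the scalar representation $h_\d(x;a,b) = \|a\|^{-1}p_\d(\la a,x\ra - b)$ from~\eqref{eq:handp} to reduce the claim to proving $p_\d(s) \le p_{\d'}(s)$ for every fixed $s\in\R$ and every $0 < \d \le \d'$. The cleanest route is to show that, for each fixed $s$, the map $\d\mapsto p_\d(s)$ is non-decreasing on $(0,\infty)$. Outside the quadratic regime (i.e.\ when $|s| > \d$) the value of $p_\d(s)$ is either $s$ or $0$, independent of $\d$, so the derivative vanishes. Inside the quadratic regime ($|s| \le \d$) a direct computation gives
\[
\frac{\partial}{\partial \d}\,\frac{(s+\d)^2}{4\d} \;=\; \frac{\d^2 - s^2}{4\d^2} \;\ge\; 0,
\]
and continuity of $p_\d$ in $s$ at the transition points $s = \pm\d$ (which is immediate from~\eqref{eq:sfun}) patches the three pieces together into a continuous, piecewise differentiable, non-decreasing function of $\d$.

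The only real obstacle is making sure the case split is handled cleanly, since the formula defining $p_\d(s)$ changes its branch as $\d$ crosses $|s|$; this is why I prefer the derivative argument together with continuity at $s = \pm\d$ rather than a brute comparison. An equivalent but more pedestrian alternative would be to verify $p_\d(s) \le p_{\d'}(s)$ by distinguishing the five cases $s > \d'$, $\d < s \le \d'$, $|s| \le \d$, $-\d' \le s < -\d$, and $s < -\d'$, each of which reduces to an elementary inequality (e.g.\ the case $\d < s \le \d'$ amounts to $s \le (s+\d')^2/(4\d')$, which is just $(s-\d')^2 \ge 0$). Either route renders the lemma a calculus exercise rather than a conceptually hard result.
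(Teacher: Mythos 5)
Your proposal is correct and follows essentially the same route as the paper: the half-space projection formula for the $\d=0$ identity, reduction to the scalar function $p_\d$ via \eqref{eq:handp}, and the computation $\frac{\partial}{\partial\d}\frac{(s+\d)^2}{4\d}=\frac{\d^2-s^2}{4\d^2}\ge 0$ for monotonicity. The only cosmetic difference is that you run the derivative-plus-continuity argument uniformly in $\d$ across the branch transition, whereas the paper treats the case $\d<s\le\d'$ by the separate elementary inequality $4s\d'\le(s+\d')^2$ — exactly the alternative you mention at the end.
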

\begin{proof}
Given a vector $x\in\R$, we have
\[\Pi_Y[x]=x-\frac{\max\{\la a,x\ra-b,\,0\}}{\|a\|^2}\, a,\]
so that
\[\dist(x,Y)=\|x-\Pi_Y[x]\|=\frac{\max\{\la a,x\ra-b,\,0\}}{\|a\|}.\]
If $\d=0$, then the last two cases in the definition of  $h_\d(x;a,b)$ reduce to
$h_0(x;a,b)=0$ when $\la a,x\ra-b\le 0$, corresponding to $h_0(x;a,b)=\dist(x,Y)=0$ when $x\in Y$.
When $x\notin Y$,  we have $h_0(x;a,b)=\frac{\la a,x\ra-b}{\|a\|}=\dist(x,Y).$

To prove the monotonicity property, in view of relation~\eqref{eq:handp},
where $p_\d(\cdot)$ is defined in~\eqref{eq:sfun},
it suffices to show that the function $p_\d(\cdot)$ has the monotonicity property, i.e.,
that we have for $0<\d \le \d'$,
\[p_\d(s)\le p_{\d'}(s)\qquad\hbox{for all }s\in\R.\]
To show this let $0<\d\le\d'$.
Note that, for  $s< -\d'$ and $s> \d'$ the functions
$p_\d(\cdot)$ and $p_{\d'}(\cdot)$ coincide, i.e.,
\[p_\d(s)=p_{\d'}(s)\qquad\hbox{when $s< -\d'$ or $s> \d'$}.\]
When $-\d' \le s< -\delta$ we have
\[p_\d(s)=0\le p_{\d'}(s).\]
Next, consider the case when $-\d \le s\le \d$.
Let $s$ be fixed and we view the function $p_\d(s)$ as a function of $\delta$.
For the partial derivative with respect to $\d$, we have
\[\frac{\partial p_\d(s)}{\partial \d}=\frac{1}{4}\,\frac{2(s + \d)\d-(s+\d)^2}{\d^2}
=\frac{1}{4}\, \frac{\d^2-s^2}{\d^2}\ge 0,\]
where the inequality follows by $\d\ge |s|$ and $0<\d$. Thus, $p_\d(s)$ is non-decreasing in $\d$,
implying that $p_\d(s)\le p_{\d'}(s)$. Since $s\in[-\d,\d]$ was arbitrary,
it follows that
\[p_\d(s)\le p_{\d'}(s) \qquad\hbox{when $-\d\le s< \d$}.\]

Finally, let $\d< s\le \d'$, in which case we have
\[p_\d(s)=s=\frac{4s\d'}{4\d'}\le \frac{(s+\d')^2}{4\d'}=p_{\d'}(s),\]
where the inequality is obtained by using $4st\le (s+t)^2$ valid for any $s,t\in\R$.
\end{proof}

In view of Lemma~\ref{lem:penalty}, for the function $F_{\g\d}$ in~\eqref{eq:penfun}
we obtain for any $\g>0$ and any $\d'\ge \d\ge0$,
\[F_{\g\d'}(x)
\ge F_{\g\d}(x)
\ge f(x) + \frac{\gamma}{m} \sum_{i=1}^{m} \dist(x, X_i)\ge f(x)
\qquad\hbox{for all }x\in\R^n.\]
This relation implies an inclusion relation for the level sets of the functions $F_{\g\d}$
and $f$, as given by the following corollary.

\begin{cor}\label{cor:bound}
For any $\g>0$ and for any $t\in\R$, we have
\[\{x\in\R^n\mid F_{\g\d'}(x)\le t\}\subseteq
\{x\in\R^n\mid F_{\g\d}(x)\le t\}\subseteq \{x\in\R^n\mid f(x)\le t\}\]
for all $\d'\ge \d\ge0$.
In particular, if the function $f$ has bounded level sets, then the functions $F_{\d,\g}$ also have bounded level sets
for any $\g>0$ and $\d\ge0$.
\end{cor}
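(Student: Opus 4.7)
The plan is essentially to unwrap the pointwise comparison
\[
F_{\g\d'}(x)\ge F_{\g\d}(x)\ge f(x)+\frac{\g}{m}\sum_{i=1}^m \dist(x,X_i)\ge f(x),
\]
which is displayed immediately before the corollary, into an inclusion of sublevel sets. This is a standard elementary fact: if $g\ge h$ pointwise on $\R^n$, then $\{g\le t\}\subseteq\{h\le t\}$ for every $t\in\R$.

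First I would recall how the pointwise chain above is obtained. For any $\d'\ge \d\ge 0$, Lemma~\ref{lem:penalty} gives $h_\d(x;a_i,b_i)\le h_{\d'}(x;a_i,b_i)$ pointwise for each $i$; summing in $i$ and scaling by $\g/m$ yields $F_{\g\d}(x)\le F_{\g\d'}(x)$ for all $x$. Likewise, the non-negativity bound~\eqref{eq:hfunineq} applied termwise gives $F_{\g\d}(x)\ge f(x)$ for all $x$ (and in fact the sharper intermediate inequality with $\dist(x,X_i)$ obtained by taking $\d=0$ in Lemma~\ref{lem:penalty}, though only the weaker form is needed here).

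Next I would argue the two inclusions separately. If $x$ satisfies $F_{\g\d'}(x)\le t$, then by the first pointwise inequality $F_{\g\d}(x)\le F_{\g\d'}(x)\le t$, which establishes the first inclusion; the identical argument with the second pointwise inequality establishes the second. For the final assertion about bounded level sets, suppose $f$ has bounded level sets. For any $\g>0$, $\d\ge 0$, and $t\in\R$, the inclusion $\{F_{\g\d}\le t\}\subseteq\{f\le t\}$ just derived places $\{F_{\g\d}\le t\}$ inside a bounded set, hence it is bounded. There is no real obstacle: the whole corollary is a direct consequence of the pointwise inequality already displayed, and the only care needed is to correctly cite Lemma~\ref{lem:penalty} for the monotonicity in $\d$ and~\eqref{eq:hfunineq} for non-negativity of $h_\d$.
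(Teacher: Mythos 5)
Your proposal is correct and follows essentially the same route as the paper: the corollary is obtained there as an immediate consequence of the displayed pointwise chain $F_{\g\d'}(x)\ge F_{\g\d}(x)\ge f(x)$, which itself rests on Lemma~\ref{lem:penalty} and the non-negativity of $h_\d$, exactly as you argue. The passage to sublevel-set inclusions and the boundedness conclusion are handled identically.
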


While Corollary~\ref{cor:bound} shows some inclusion relations for the level sets of $F_{\g\d}$ and $f$,
for the same value $t$, it will be important in our analysis to identify a value of $t$  for which
these level sets are nonempty. The following corollary shows that choosing $f(\hat x)$, for any
feasible $\hat x$, can be used to construct non-empty level sets.
\begin{cor}\label{cor:lset}
Let $\g>0$ and $\d\ge 0$ be arbitary, and let
$\hat x$ be a feasible point for the original problem~\eqref{eq:problem}.
Then, for the scalar $t_{\g\d}(\hat x)$ defined by
\[t_{\g\d}(\hat x)= f(\hat x)+\frac{\g\d}{4\min_{1\le i\le m}\|a_i\|},\]
the level set
$\{x\in\R^n\mid F_{\g\d}(x)\le t_{\g\d}(\hat x)\}$ is nonempty and
the solution set $X^*_{\g\d}$ of
the penalized problem~\eqref{eq:pen-problem} is contained in the level set
$\{x\in\R^n\mid f(x)\le t_{\g\d}(\hat x)\}$.
\end{cor}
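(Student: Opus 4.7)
The plan is to verify the two claims in order, both by exhibiting $\hat x$ as a convenient test point.

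For nonemptiness of the level set $\{x\in\R^n\mid F_{\g\d}(x)\le t_{\g\d}(\hat x)\}$, I would show that $\hat x$ itself lies in it. Since $\hat x$ is feasible for~\eqref{eq:problem}, we have $\la a_i,\hat x\ra\le b_i$ for every $i$, so inequality~\eqref{eq:hfunineq1} gives $h_\d(\hat x;a_i,b_i)\le\frac{\d}{4\|a_i\|}$ for each $i$. Summing and averaging, and then bounding $\frac{1}{\|a_i\|}\le\frac{1}{\min_j\|a_j\|}$, yields
\[
F_{\g\d}(\hat x)=f(\hat x)+\frac{\g}{m}\sum_{i=1}^m h_\d(\hat x;a_i,b_i)\le f(\hat x)+\frac{\g\d}{4\min_{1\le i\le m}\|a_i\|}=t_{\g\d}(\hat x),
\]
so $\hat x$ belongs to the level set.

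For the second claim, let $x^*\in X^*_{\g\d}$. By optimality of $x^*$ and the inequality just established,
\[
F_{\g\d}(x^*)\le F_{\g\d}(\hat x)\le t_{\g\d}(\hat x).
\]
Now I invoke Corollary~\ref{cor:bound} (with $\d'=\d$) to conclude $f(x^*)\le F_{\g\d}(x^*)$; equivalently, one may observe directly that $h_\d(\cdot;a_i,b_i)\ge 0$ by~\eqref{eq:hfunineq}, so $F_{\g\d}(x)\ge f(x)$ pointwise. Combining the two inequalities gives $f(x^*)\le t_{\g\d}(\hat x)$, i.e.\ $x^*\in\{x\in\R^n\mid f(x)\le t_{\g\d}(\hat x)\}$, proving the inclusion $X^*_{\g\d}\subseteq\{x\mid f(x)\le t_{\g\d}(\hat x)\}$.

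There is no genuine obstacle here: both assertions reduce to combining the upper bound~\eqref{eq:hfunineq1} on the penalty at feasible points with the lower bound~\eqref{eq:hfunineq} that the penalty is nonnegative. The only step that requires a small computation is the uniform upper bound $\frac{1}{m}\sum_i\frac{1}{\|a_i\|}\le\frac{1}{\min_j\|a_j\|}$, which is immediate. The statement does not claim that $X^*_{\g\d}$ is nonempty, so no existence argument (via coercivity or the level-set boundedness in Corollary~\ref{cor:bound}) is needed in the proof.
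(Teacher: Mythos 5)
Your proof is correct and follows essentially the same route as the paper: exhibit $\hat x$ in the level set of $F_{\g\d}$ via the feasibility bound~\eqref{eq:hfunineq1}, then pass from $F_{\g\d}(x^*)\le t_{\g\d}(\hat x)$ to $f(x^*)\le t_{\g\d}(\hat x)$ using $F_{\g\d}\ge f$ (the paper cites Corollary~\ref{cor:bound}; you additionally note the direct argument via nonnegativity of the penalty, which is the same fact). Your closing observation that no existence argument for $X^*_{\g\d}$ is needed also matches the paper, which remarks immediately after the corollary that $X^*_{\g\d}$ may be empty.
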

\begin{proof}
Let $\g>0$ and $\d\ge0$ be arbitrary, and $\hat x$ be any feasible point for the original problem.
Since $\hat x$ is feasible, by relation~\eqref{eq:hfunineq1}, we have
\[h_\delta(\hat x; a_i,b_i) \le \frac{\delta}{4\|a_i\|}\qquad\hbox{for all }i=1,\ldots,m.\]
Therefore,
\[F_{\g\d}(\hat x)
\le f(\hat x)+ \frac{\g\d}{4m} \sum_{i=1}^{m} \frac{1}{\|a_i\|}
\le f(\hat x)+ \frac{\g\d}{4 \min_{1\le i\le m}\|a_i\|}=t_{\g\d}(\hat x),\]
implying that
$\hat x$ belongs to the level set $\{x\in\R^n\mid F_{\g\d}(x)\le t_{\g\d}(\hat x)\}$.
Noting that
\[X^*_{\g\d}\subseteq \{x\in\R^n\mid F_{\g\d}(x)\le t_{\g\d}(\hat x)\},\]
by Corollary~\ref{cor:bound}, we obtain
\[X^*_{\g\d}\subseteq \{x\in\R^n\mid f(x)\le t_{\g\d}(\hat x)\}.\]
\end{proof}

In Corollary~\ref{cor:lset}, the solution set $X^*_{\g\d}$ of
the penalized problem~\eqref{eq:pen-problem} may be empty.
In the next section, we will consider the cases when the solution sets are nonempty
for both the original and the penalized problems.

\section{Relations for Penalized Problem and Original Problem Solutions}\label{solution-relation}
In what follows, we establish some important relations between the solutions of the penalized problem
and the original problem. We assume that the constraint set of problem~\eqref{eq:problem} has a nonempty interior. This assumption implies a special property of the constraint set under consideration which plays a key role in the analysis.
which is valid when the constraint set of problem~\eqref{eq:problem} has a nonempty interior.
To provide this property, we let $X_i$ be the set defined by the $i$th inequality in the constraint set of problem~\eqref{eq:problem}, i.e.,
\[X_i=\{x\in\R^n \mid \la a_i, x\ra - b_i\le 0\},\]
and we define the set $X$ as the intersection of these sets
\[X=\cap_{i=1}^m X_i.\]

We make the following assumption on the interior of the set $X$.
\begin{assum}\label{assum:constrSet}
 The interior of the set $X$ is not empty, i.e.,
 there is a point $\bar x$ such that for some $\epsilon>0$,
 \[\la a_j,\bar x\ra -b_j \le -\epsilon\qquad\hbox{for all $j=1,\ldots,m$}.\]
\end{assum}

In what follows we establish some conditions for solution feasibility of the penalized problem. These conditions involve a constant $\beta$
from the following Hoffman's lemma~\cite{Hoffman}:
\begin{lem}\label{lem:hof}
For the sets $X_i$ there exists
$\beta=\beta(a_1,\ldots,a_m)>0$ such that
\[
\beta \sum_{i=1}^{m} \dist(x, X_i) \ge \dist(x,X)
\]
{for all } $x\in\R^n$.
\end{lem}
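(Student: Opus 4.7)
The plan is to invoke the KKT conditions for the Euclidean projection of $x$ onto the polyhedron $X$, and then use a Carath\'eodory-type reduction to obtain a uniform bound on the multipliers. Given $x\in\R^n$, let $\hat x=\Pi_X[x]$ and let $I\subseteq\{1,\ldots,m\}$ denote the set of indices at which the constraint is active, i.e.\ $\la a_i,\hat x\ra=b_i$. Optimality of $\hat x$ yields multipliers $\lambda_i\ge0$, $i\in I$, satisfying $x-\hat x=\sum_{i\in I}\lambda_i a_i$.

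By the standard Carath\'eodory reduction for conic combinations (iteratively perturbing the multipliers along a nontrivial linear dependence of $\{a_i\}_{i\in I}$ until one of them becomes zero and can be removed), I may assume without loss of generality that $\{a_i\}_{i\in I}$ is linearly independent, so that $|I|\le n$ and $\lambda_i\ge 0$ for all $i\in I$. Taking the inner product of $x-\hat x=\sum_{i\in I}\lambda_i a_i$ with $x-\hat x$ and using $\la a_i,x-\hat x\ra=\la a_i,x\ra-b_i\le\|a_i\|\dist(x,X_i)$ for every $i\in I$ gives
\[\|x-\hat x\|^2\le\sum_{i\in I}\lambda_i\|a_i\|\dist(x,X_i)\le\Bigl(\max_{i\in I}\lambda_i\|a_i\|\Bigr)\sum_{i=1}^{m}\dist(x,X_i).\]

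Because $\{a_i\}_{i\in I}$ is linearly independent, the linear map $\lambda\mapsto\sum_{i\in I}\lambda_i a_i$ has a bounded left inverse on its image; hence there is a constant $c_I>0$ depending only on the subfamily $\{a_i\}_{i\in I}$ such that $\max_{i\in I}\lambda_i\le c_I\|x-\hat x\|$. Substituting this bound into the inequality above and dividing through by $\|x-\hat x\|$ (the case $\hat x=x$ being trivial), one arrives at $\dist(x,X)\le c_I\max_{i\in I}\|a_i\|\sum_{i=1}^{m}\dist(x,X_i)$. Since there are only finitely many possible subsets $I\subseteq\{1,\ldots,m\}$, setting $\beta$ to be the maximum of $c_I\max_{i\in I}\|a_i\|$ over all such $I$ produces a single constant depending only on $a_1,\ldots,a_m$ and delivers the claimed bound uniformly in $x$.

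The main technical obstacle is justifying the Carath\'eodory step while preserving nonnegativity of the multipliers; this is standard but must be executed carefully, because one needs to both shrink $I$ to a linearly independent subset and confirm that the resulting $\lambda_i$ remain admissible KKT multipliers so that the identity $x-\hat x=\sum_i\lambda_i a_i$ continues to hold. Note that the argument does not explicitly use the interior-point Assumption~\ref{assum:constrSet}; it only requires $X\ne\emptyset$ so that the projection $\hat x$ exists.
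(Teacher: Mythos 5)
Your argument is correct. There is nothing in the paper to compare it against line by line: the paper does not prove this lemma at all, but simply cites Hoffman's classical error bound~\cite{Hoffman} and takes the stated form (with the residuals rescaled into the distances $\dist(x,X_i)=\max\{0,\la a_i,x\ra-b_i\}/\|a_i\|$) as given. Your KKT-plus-conic-Carath\'eodory derivation is a legitimate, self-contained route to exactly this form. The projection identity $x-\hat x=\sum_{i\in I}\lambda_i a_i$ with $\lambda_i\ge 0$ requires no constraint qualification because the constraints are affine; the reduction to a linearly independent active subfamily preserves both the nonnegativity of the multipliers and the representation identity, and the identity together with $\lambda_i\ge0$ and $i\in I$ being active is all your subsequent computation uses (the bound $\la a_i,x\ra-b_i\le\|a_i\|\dist(x,X_i)$ holds regardless of the sign of the left-hand side, so every term is controlled). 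The estimate $\max_{i\in I}\lambda_i\le c_I\|x-\hat x\|$ follows from invertibility of the Gram matrix of the independent subfamily, and maximizing $c_I\max_{i\in I}\|a_i\|$ over the finitely many linearly independent subfamilies yields a constant depending only on $a_1,\ldots,a_m$. That last feature is worth emphasizing: your construction makes explicit that $\beta$ is independent of $b_1,\ldots,b_m$, which is precisely the property the paper invokes without proof when it reapplies the lemma to the shrunken set $X_\d$ in the proof of Lemma~\ref{lem:xin}. You are also right that only $X\ne\emptyset$ is needed, not Assumption~\ref{assum:constrSet}.
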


We next prove a lemma that will be important for our analysis of
solution feasibility of the penalized problem.
In this lemma and later on, we use the following notation
\be\label{eq:aminmax}
\a_{\min} = \min_{j=1,\ldots,m}\|a_j\|,\qquad \a_{\max} = \max_{j=1,\ldots,m}\|a_j\|.\ee
\tat{The lemma below proves that for any non-feasible point $x$ there exists a feasible point that is not penalized and whose distance to $x$ can be upper bounded by the distance between $x$ and the feasible set plus a term dependent on the problem specific parameters $\b,m,\d,$ and $\a_{\min}$. By an appropriate choice of the smoothness parameter $\d$ we will be able to refine this bound and to use the corresponding result to prove feasibility of the solution for the penalized problem. }

\begin{lem}\label{lem:xin}
 Let Assumption~\ref{assum:constrSet} hold and
 let $\d$ be a positive constant such that $\d\le \e$, where
 $\e$ is defined by Assumption~\ref{assum:constrSet}.
 Then, for any $x\notin X$ there exists a feasible point $x_{in}\in X$ such that
 \begin{enumerate}
  \item[(a)] $h_\d(x_{in};a_j,b_j)=0\quad\hbox{for all } j=1,\ldots,m,$
  \item[(b)] $\|x-x_{in}\|\le \|x- \Pi_X[x]\|+ \frac{\b m \d}{\a_{\min}},$
 \end{enumerate}
where $\a_{\min}$ is defined in~\eqref{eq:aminmax} and $\b$ is Hoffman's constant defined in~Lemma~\ref{lem:hof}.
\end{lem}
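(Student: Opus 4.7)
My plan is to construct $x_{in}$ as a projection onto a ``shrunken'' feasible region. For each $j=1,\ldots,m$, define the shifted halfspace
\[X_j^\d = \{x \in \R^n \mid \la a_j, x\ra - b_j \le -\d\},\]
and set $X_\d = \cap_{j=1}^m X_j^\d$. By direct inspection of the definition~\eqref{eq:hfun}, any $y\in X_\d$ satisfies $h_\d(y;a_j,b_j) = 0$ for every $j$, so part~(a) follows as soon as $x_{in}\in X_\d$. The natural candidate is therefore
\[x_{in} = \Pi_{X_\d}\bigl[\Pi_X[x]\bigr].\]
Before using this, I must check that $X_\d$ is nonempty. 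This is immediate from Assumption~\ref{assum:constrSet} together with the standing hypothesis $\d\le\e$: the Slater point $\bar x$ satisfies $\la a_j,\bar x\ra - b_j \le -\e \le -\d$ for all $j$, so $\bar x\in X_\d$ and the projection above is well defined.

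The main technical step, and the one I expect to be the crux, is the estimate $\dist(\Pi_X[x], X_\d) \le \b m\d/\a_{\min}$. To obtain it I plan to apply Hoffman's lemma (Lemma~\ref{lem:hof}) to the shifted system $\{X_j^\d\}$. The key observation is that Hoffman's constant depends only on the coefficient vectors $a_1,\ldots,a_m$ and not on the right-hand sides of the inequalities, so the same $\b$ as in Lemma~\ref{lem:hof} governs the shifted system. Since $\Pi_X[x]\in X$ satisfies $\la a_j,\Pi_X[x]\ra - b_j \le 0$, the standard distance-to-halfspace formula gives
\[\dist\bigl(\Pi_X[x], X_j^\d\bigr) = \frac{\max\{\la a_j,\Pi_X[x]\ra - b_j + \d,\,0\}}{\|a_j\|} \le \frac{\d}{\|a_j\|} \le \frac{\d}{\a_{\min}}\]
for each $j$. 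Summing over $j$ and invoking the shifted Hoffman bound then yields $\dist(\Pi_X[x], X_\d) \le \b m\d/\a_{\min}$.

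To conclude, the triangle inequality combined with the identity $\|\Pi_X[x] - x_{in}\| = \dist(\Pi_X[x], X_\d)$, which holds because $x_{in}$ is by construction the projection of $\Pi_X[x]$ onto $X_\d$, gives
\[\|x - x_{in}\| \le \|x - \Pi_X[x]\| + \|\Pi_X[x] - x_{in}\| \le \|x - \Pi_X[x]\| + \frac{\b m \d}{\a_{\min}},\]
which is exactly part~(b). The only delicate point in the whole argument is the translation-invariance of Hoffman's constant, which I would justify by recalling that the standard derivation of the bound yields a constant depending only on the rank and spectral data of the matrix whose rows are the $a_j$; the remainder of the proof reduces to the halfspace-distance formula and the triangle inequality.
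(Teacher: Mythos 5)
Your proposal is correct and follows essentially the same route as the paper: the shrunken set $X_\d$, the translation-invariance of Hoffman's constant, the halfspace-distance formula, and the triangle inequality through $\Pi_X[x]$. The only (immaterial) difference is that you take $x_{in}=\Pi_{X_\d}[\Pi_X[x]]$ directly, whereas the paper sets $x_{in}=\Pi_{X_\d}[x]$ and then uses the optimality of that projection to compare against your double-projection point, which serves there only as an intermediate comparison point.
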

\begin{proof}
Let  $0<\delta\le \e$ and consider the perturbed set $X_\d$, which is obtained by perturbing the inequalities by
amount of $\d$ toward the interior of $X$ (see Figure~\ref{fig:radius}), i.e.,
\[X_{\d j} =\{x\in\R^n\mid \la a_j, x\ra -b_j \le -\d\},\qquad X_\d=\cap_{i=1}^m X_{\d i}.\]
Assumption~\ref{assum:constrSet} and the condition $\d\le\e$ imply that  $X_\d\ne\emptyset$.

\begin{figure}[!t]
\centering
\includegraphics[width=0.5\textwidth]{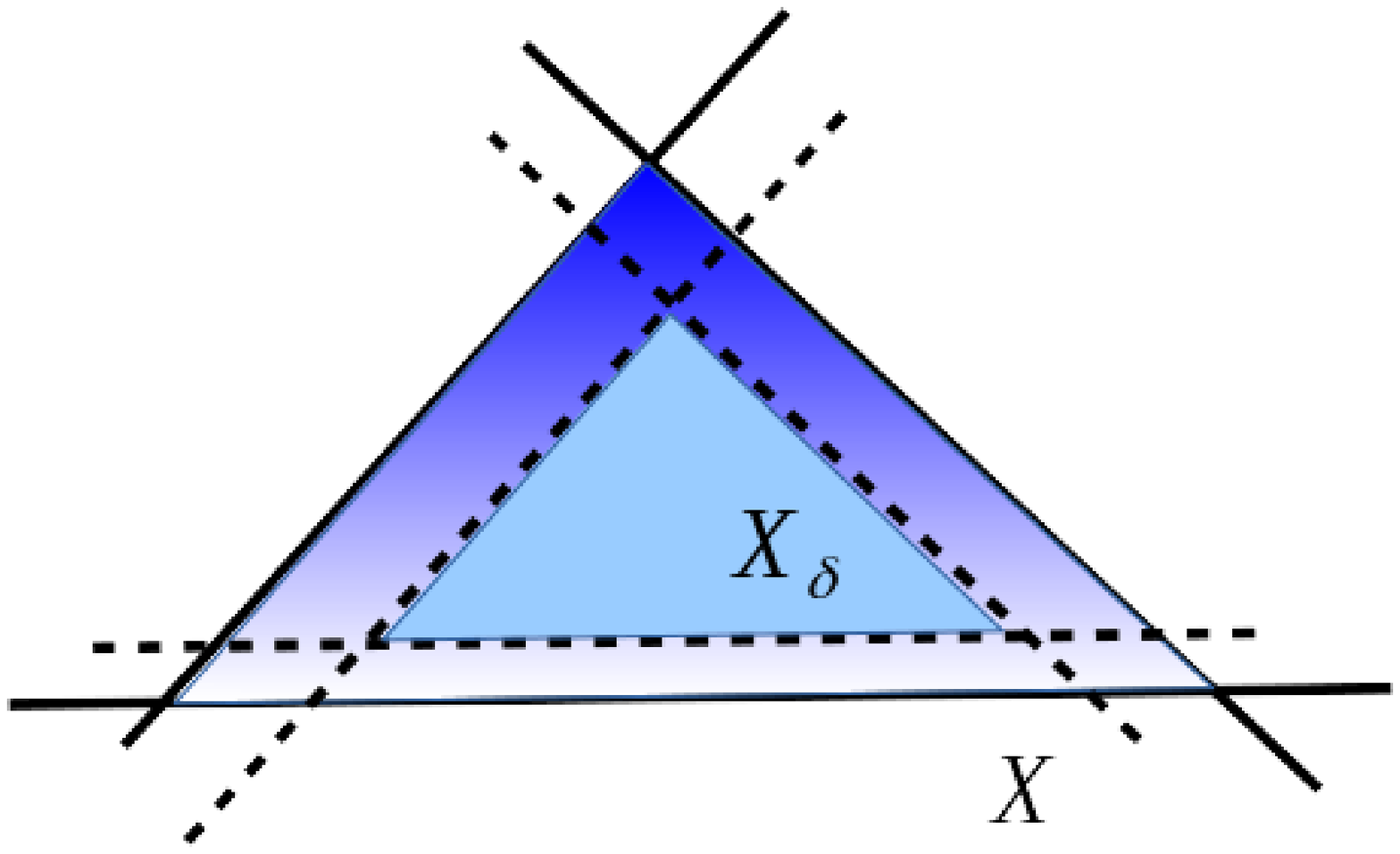}
\caption{Illustration of the set $X_{\d}$.}
\label{fig:radius}
\end{figure}

Let us define
\[x_{in}=\Pi_{X_\d}[x].\]
By the definition of $X_\d$, we have $\la a_j,\Pi_{X_\d}[x]\ra -b_j \le -\d$ for all $j=1,\ldots,m$.
Hence, taking into account the definition of the penalty functions $h_\d(x_{in};a_j,b_j)$, $j=1,\ldots,m,$
(see~\eqref{eq:hfun}),
we obtain
\[h_\d(x_{in};a_j,b_j)=0\qquad\hbox{for all } j=1,\ldots,m,\]
thus showing the relation in part (a).

To estimate  $\|x-x_{in}\|$, let us
consider an intermittent point $\Pi_{X_\d}[\Pi_X[x]]$ obtained by projecting $x$ on $X$
and by projecting the resulting point on the set $X_\d$.
Since $x_{in}=\Pi_{X_\d}[x]$ is the closest point in the set $X_\d$ to $x$,
\be\label{eq:dist-xinx}
\|x-x_{in}\|\le \|x - \Pi_{X_\d}[\Pi_X[x]]\|\le \|x- \Pi_X[x]\|+ \left\|\Pi_X[x] -\Pi_{X_\d}[\Pi_X[x]]\right\|.\ee
Next, note that the constant $\b$ in Hoffman's lemma (see Lemma~\ref{lem:hof}) depends only on
the vectors $a_i$, $i=1,\ldots,m$ (not on the values $b_i$).
Thus, Hoffman's result in Lemma~\ref{lem:hof} applies to the set $X_\d$ with the same constant $\b$ as it holds in respect to the set  $X$, which implies that
\[\left\|\Pi_X[x] -\Pi_{X_\d}[\Pi_X[x]]\right\|=\dist( \Pi_X[x],X_\d)\le \b\sum_{i=1}^m\dist(\Pi_X[x],X_{\d i}).\]

Therefore, according to the definition of $X_{\d j}$, it follows that
\[\left\|\Pi_X[x] -\Pi_{X_\d}[\Pi_X[x]] \right\|\le \b\sum_{j=1}^m\frac{\max\{0,\la a_j, \Pi_X[x]\ra -b_j+\d\}}{\|a_j\|}.\]
Since $\Pi_X[x]\in X$, we have that $\la a_j, \Pi_X[x]\ra -b_j\le 0$ for all $j$. Hence,
\[\left\|\Pi_X[x] -\Pi_{X_\d}[\Pi_X[x]]\right\|\le \b\sum_{j=1}^m\frac{\d}{\|a_j\|}\le \frac{\b m \d}{\a_{\min}}.\]
From the preceding relation and relation~\eqref{eq:dist-xinx} it follows that
\[\|x-x_{in}\|\le \|x- \Pi_X[x]\|+ \frac{\b m \d}{\a_{\min}},\]
thus establishing the result in part (b).
\end{proof}

We next turn our attention to the solution sets of the problems.
We let
$X^*$ and $X^*_{\g\d}$ denote the solution sets of the original problem and the penalized problem, respectively,
i.e.,
\[X^*=\left\{ x\in X \mid f(x)=\min_{z\in X} f(z) \right\},\qquad
X^*_{\g\d}=\left\{ x\in \R^n \mid F_{\g\d}(x)=\min_{z\in \R^n} F_{\g\d}(z) \right\}.\]

In our main result establishing that $X^*_{\g\d}\subseteq X$, under some conditions on the penalty parameters
$\g$ and $\d$,
we will require that the function $f$ has uniformly bounded subgradients over a suitably defined region.
If the constraint set $X$ is bounded, then the set $X$ can be taken as such a region and an upper bound
for the subgradient norms can be defined by
\[L=\max\{\|s\|\mid s\in\partial f(x), \ x\in X\},\]
where $\partial f(x)$ is the subdifferential set of $f$ at $x$.
If $X$ is unbounded, we identify a suitable region in the following lemma. In particular, the region
should be large enough to contain the sets $X^*_{\g\d}$ for a range of penalty values,
and also the points $x_{in}$ from Lemma~\ref{lem:xin}(b) for each $x\in X^*_{\g\d}$.

\begin{lem}\label{lem:Lipconst}
Let Assumption~\ref{assum:constrSet} hold and
 let $\d$ be a positive constant such that $\d\le \e$, where
 $\e$ is defined by Assumption~\ref{assum:constrSet}.
 Assume that $f$ has bounded level sets.
 Then, for all $\g>0$ and $\d>0$ satisfying $\g\d\le c$ for some $c>0$,
 there is a ball centered at the origin that contains  all the points
 $\Pi_X[x]$ with $x\in X^*_{\g\d}$ and the points $x_{in}$ satisfying Lemma~\ref{lem:xin}(b) with
 $x\in X^*_{\g\d}$.
 The radius of this ball depends on some feasible point $\hat x\in X$,
 the given value of $c$, the value $\e$ from Assumption~\ref{assum:constrSet},
 and the problem characteristics reflected in the constants $\a_{\min}$, $m$ and $\b$ from Hoffman's result
 (see~Lemma~\ref{lem:hof}).
\end{lem}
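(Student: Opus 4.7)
The strategy is to first use the level-set characterization in Corollary~\ref{cor:lset} to confine $X^*_{\g\d}$ to a ball whose radius is uniform over the regime $\g\d\le c$, and then enlarge that radius by a constant that accommodates the projections $\Pi_X[x]$ and the perturbed feasible points $x_{in}$ associated with $x\in X^*_{\g\d}$.

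First I would invoke Corollary~\ref{cor:lset} to obtain $X^*_{\g\d}\subseteq\{x\in\R^n\mid f(x)\le t_{\g\d}(\hat x)\}$ with $t_{\g\d}(\hat x)=f(\hat x)+\frac{\g\d}{4\a_{\min}}$. Using $\g\d\le c$, this yields the single $(\g,\d)$-independent inclusion $X^*_{\g\d}\subseteq L$, where $L:=\{x\in\R^n\mid f(x)\le f(\hat x)+\frac{c}{4\a_{\min}}\}$. Since $f$ has bounded level sets, $L$ is bounded and is contained in some ball $B(0,R_1)$ whose radius $R_1$ depends only on $f$, $\hat x$, $c$ and $\a_{\min}$. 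In particular, $\|x\|\le R_1$ for every $x\in X^*_{\g\d}$.

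Next, for each such $x$, since $\hat x\in X$ the non-expansiveness of $\Pi_X$ gives $\|\Pi_X[x]-\hat x\|\le\|x-\hat x\|\le R_1+\|\hat x\|$, and therefore $\|\Pi_X[x]\|\le R_1+2\|\hat x\|$. For the associated $x_{in}$ from Lemma~\ref{lem:xin}(b), the estimate $\|x-x_{in}\|\le\|x-\Pi_X[x]\|+\frac{\b m\d}{\a_{\min}}$ combined with $\|x-\Pi_X[x]\|\le\|x-\hat x\|\le R_1+\|\hat x\|$ and the standing hypothesis $\d\le\e$ yields $\|x_{in}\|\le 2R_1+\|\hat x\|+\frac{\b m\e}{\a_{\min}}$. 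Taking $R$ to be the maximum of $R_1$, $R_1+2\|\hat x\|$, and $2R_1+\|\hat x\|+\frac{\b m\e}{\a_{\min}}$ then delivers the common ball, with dependencies precisely those listed in the statement.

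The main obstacle, though modest, is ensuring uniformity in $(\g,\d)$: the term $\frac{\b m\d}{\a_{\min}}$ arising in Lemma~\ref{lem:xin}(b) cannot be controlled through $\g\d\le c$ alone (since $\g$ may be small and $\d$ correspondingly large), which is exactly where the preamble hypothesis $\d\le\e$ of Lemma~\ref{lem:xin} becomes indispensable. Once it is recognized that \emph{both} conditions $\g\d\le c$ and $\d\le\e$ must be used simultaneously, the argument reduces to a short chain of triangle inequalities and the projection-nonexpansiveness inequality applied at the reference point $\hat x$.
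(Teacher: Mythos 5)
Your proposal is correct and follows essentially the same route as the paper: Corollary~\ref{cor:lset} plus $\g\d\le c$ and bounded level sets to confine $X^*_{\g\d}$ to a fixed ball, non-expansiveness of $\Pi_X$ for the projections, and a triangle-inequality chain with Lemma~\ref{lem:xin}(b) and $\d\le\e$ for the points $x_{in}$. The only cosmetic difference is that you bound $\|x-\Pi_X[x]\|$ by $\|x-\hat x\|$ at the reference point, whereas the paper uses $\|x\|+\|\Pi_X[x]\|$; both yield the same qualitative dependencies for the radius.
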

\begin{proof}
Since $f$ has bounded level sets, by Corollary~\ref{cor:bound}, the functions $F_{\g\d}$ also have bounded
level sets for all $\d\ge0$ and $\g\ge0$. Hence,
the solution set $X^*$ is nonempty and, also, the solution sets $X^*_{\g\d}$ that
are  nonempty for all $\g>0$ and $\d>0$.
We next employ Corollary~\ref{cor:lset} to construct a compact set that contains
the optimal sets $X^*_{\g\d}$
are  nonempty for all $\g>0$ and $\d>0$ for a range of values of these penalty parameters.

To start, we choose some feasible point $\hat x\in X$ and, by Corollary~\ref{cor:lset},
we obtain
\[X^*_{\g\d}\subseteq \{x\in\R^n\mid f(x)\le t_{\g\d}(\hat x)\}\qquad\hbox{for all $\d\ge0$ and $\g>0$},\]
where
\[t_{\g\d}(\hat x)=f(\hat x)+\frac{\g\d}{4\a_{\min}}.\]
Under the assumption that $\g\d\le c$ for some $c>0$,
we have $t_{\g\d}\le f(\hat x) + \hat c,$
where $\hat c=\frac{c}{4\a_{\min}}$ (see~\eqref{eq:aminmax} for the definition of $\a_{\min}$).
Thus, we consider the level set
\[ \{x\in\R^n\mid f(x)\le f(\hat x)+ \hat c\},\]
which is bounded by the assumption that $f$ has bounded level sets.
Furthermore,
\[X^*_{\g\d}\subseteq \{x\in\R^n\mid f(x)\le f(\hat x)+ \hat c\}\quad
\hbox{for all $\g>0$ and $\d>0$ satisfying $\g\d\le c$}.\]
Hence, these optimal sets are uniformly bounded, i.e., for some constant $B_1(\hat x,\hat c)>0$,
\[\|x\|\le B_1(\hat x,\hat c)\qquad\hbox
{for all $x\in X_{\g\d}^*$, and all $\g>0$ and $\d>0$ with $\g\d\le c$}.\]
Since the projection operator is non-expansive, the projections of the points in the set $X^*_{\g\d}$
on the set $X$ are also bounded,
i.e., for some constant $B_2(\hat x,\hat c) = B_2(B_1(\hat x,\hat c))>0$,
\be\label{eq:prxbound}
\|\Pi_X[x]\|\le B_2(\hat x,\hat c)
\qquad\hbox{for all $x\in X_{\g\d}^*$, and all $\g>0$ and $\d>0$ with $\g\d\le c$}.\ee
Finally, for each $x\in X^*_{\g\d}$, consider a point $x_{in}$ as given in Lemma~\ref{lem:xin}.
Then, by Lemma~\ref{lem:xin}(b) for each $x\in X^*_{\g\d}$, it follows that
\[\|x_{in}\|\le \| x_{in}- x\| +  \|x\|
\le \|x - \Pi_X[x]\| + \frac{\b m\d}{\a_{\min}} + \|x\|
\le 2\|x\| +\|\Pi_X[x]\| + \frac{\b m\e}{\a_{\min}}, \]
where we use assumption that $\d\le\e$.
Thus, for each $x\in X^*_{\g\d}$, the point $x_{in}$ from Lemma~\ref{lem:xin}(b) satisfies
the following relation
\[\|x_{in}\|\le B(\hat x,\hat c,\e)
\qquad\hbox{for all $\g>0$ and $\d>0$ with $\d\le \e$ and $\g\d\le c$},\]
where
\[B(\hat x,\hat c,\e)= 2B_1(\hat x,\hat c)+B_2(\hat x,\hat c)+\frac{\b m\e}{\a_{\min}}.\]
In view of~\eqref{eq:prxbound}, the ball centered at the origin with the radius $B(\hat x,\hat c,\e)$
also contains $\Pi_X[x]$ for all $x\in X_{\g\d}^*$ and for all $\g>0$ and $\d>0$, with $\d\le \e$ and $\g\d\le c$.
Since $\hat c=\frac{c}{4\a_{\min}}$, we see that the constant $B(\hat x,\hat c,\e)$ depends
on the choice of the feasible point $\hat x\in X$, the given value of $c$, the value $\e$ from Assumption~\ref{assum:constrSet},
and the problem characteristics reflected in the constants $\a_{\min}$, $m$ and $\b$ from Hoffman's result
(see~Lemma~\ref{lem:hof}).
\end{proof}

In what follows, we will let $R(c,\e)$ denote the radius of the ball identified in Lemma~\ref{lem:Lipconst},
and suppress the dependence on the other parameters. We define
\be\label{eq:Lipc}
L(c,\e)=\max\{\|s\|\mid s\in\partial f(x),\, \|x\|\le R(c,\e)\}.\ee
\tat{Note that the constant $L(c,\e)$ defined above is a bound for the subgradients of $f$ in the ball with the center at the origin and the radius $R(c,\e)$. The main properties of this ball are formulated by Lemma~\ref{lem:Lipconst}. The parameters defining $R(c,\e)$ include $\e$ from Assumption~\ref{assum:constrSet} and a predefined constant $c$ which controls the relation between $\d$ and $\g$, namely $\g\d<c$, and, thus, guarantees boundedness of the optimal sets of the penalized problems given such $\g$ and $\d$ (see proof of Lemma~\ref{lem:Lipconst}).}

\tat{With Lemma~\ref{lem:xin} and Lemma~\ref{lem:Lipconst},
we are ready to provide a key relation for the solutions of the penalized problem and the original problem. Before stating this result, we emphasize one time more the meaning of the parameters used in the analysis. Specifically, $\d$ and $\g$ are used to define the penalized problem that can be solved by unconstrained optimization methods. The parameter $\d$ is chosen to define the penalty functions. These functions penalize a region in the feasible set. Such choice of penalties is motivated by the goal to be able to apply fast incremental optimization procedures to a smooth unconstrained penalized problem. The parameter $\g$ defines the weight of the penalty functions. By increasing $\g$ we increase cost for constraints violation and, thus, let any solution of the penalized problem approach the feasible region.
Next, we show that for sufficiently small values of the smoothing parameter $\d$ and sufficiently large values of the penalty parameter $\g$, the solutions of the penalized problem are feasible for the original problem. The choice of these values depends on the number of constraints $m$, $\a_{\min}$ and $\a_{\max}$, Hoffman's constant $\beta$, the parameter $\e$ from Assumption~\ref{assum:constrSet}, as well as $L(c,\e)$ defined in \eqref{eq:Lipc}.}

\begin{prop}\label{prop:penSol}
  Let
  Assumption~\ref{assum:constrSet} hold and
  assume that $f$ has bounded level sets.
  Let the parameters $\gamma$ and $\d$ be chosen such that
  \[0<\d<\min\left\{\e,\frac{16\a_{\min}^2}{\b^2 m^2}\right\}, \qquad \g\d\le c,\qquad \g\ge \Gamma,\]
  with
    \[\Gamma =\max\left\{L\left(\frac{1}{m\beta} - \frac{\sqrt{\de}}{4\a_{\min}}\right)^{-1},4mL\a_{\max} \left( \frac{1}{\sqrt{\d}} + \frac{\b m}{\a_{\min}}\right)\right\},\]
   where $c>0$ is arbitrary, $\e$ is the constant from Assumption~\ref{assum:constrSet},
   $\beta$ is the constant from Hoffman's bound (see~Lemma~\ref{lem:hof}),
   the scalars $\a_{\min}$ and $\a_{\max}$ are defined in~\eqref{eq:aminmax},
   while $L=L(c,\e)$ is defined by~\eqref{eq:Lipc}.
   Then, every point in the solution set $X^*_{\g\d}$ of the penalized problem is feasible
    for the problem~\eqref{eq:problem}, namely $X^*_{\g\d}\subset X$.
 \end{prop}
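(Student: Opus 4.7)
The plan is proof by contradiction: assume some $x^* \in X^*_{\gamma\delta}$ has $x^* \notin X$, and derive a contradiction with $\gamma \ge \Gamma$. Lemma~\ref{lem:Lipconst} places $x^*$, $\Pi_X[x^*]$, and the point $x_{in}$ from Lemma~\ref{lem:xin} inside the ball of radius $R(c,\epsilon)$, so by convexity of $f$ the Lipschitz bound $|f(u)-f(v)| \le L\|u-v\|$ with $L = L(c,\epsilon)$ holds between any two of these points. Two lower bounds on the penalty sum at $x^*$ will be used: Lemma~\ref{lem:penalty} gives $h_\delta(\cdot; a_i, b_i) \ge \dist(\cdot, X_i)$, which combined with Hoffman's Lemma~\ref{lem:hof} yields $\sum_{i=1}^m h_\delta(x^*; a_i, b_i) \ge \dist(x^*,X)/\beta$; and since $x^* \notin X$, at least one constraint is violated, so~\eqref{eq:hfunineq2} gives the constant bound $\sum_i h_\delta(x^*; a_i, b_i) > \delta/(4\alpha_{\max})$.

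The expression for $\Gamma$ as a maximum of two terms suggests a dichotomy at $\dist(x^*,X) = \sqrt{\delta}$. In the regime $\dist(x^*,X) > \sqrt{\delta}$, I compare $x^*$ with $\Pi_X[x^*]$: since $\Pi_X[x^*] \in X$, relation~\eqref{eq:hfunineq1} gives $\sum_i h_\delta(\Pi_X[x^*]; a_i, b_i) \le m\delta/(4\alpha_{\min})$, so the optimality inequality $F_{\gamma\delta}(x^*) \le F_{\gamma\delta}(\Pi_X[x^*])$ becomes $(\gamma/m)\sum_i h_\delta(x^*; a_i, b_i) \le L\dist(x^*,X) + \gamma\delta/(4\alpha_{\min})$. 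Substituting the Hoffman-based lower bound on the left, using $\delta < \sqrt{\delta}\cdot\dist(x^*,X)$ on the right, and cancelling $\dist(x^*,X)$ yields $\gamma(1/(m\beta) - \sqrt{\delta}/(4\alpha_{\min})) < L$. The hypothesis $\delta < 16\alpha_{\min}^2/(m^2\beta^2)$ makes the parenthesised quantity strictly positive, so $\gamma < L(1/(m\beta) - \sqrt{\delta}/(4\alpha_{\min}))^{-1}$, contradicting $\gamma \ge \Gamma$.

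In the complementary regime $0 < \dist(x^*,X) \le \sqrt{\delta}$, I compare with $x_{in}$, using Lemma~\ref{lem:xin}(a) to eliminate the penalty at $x_{in}$ and Lemma~\ref{lem:xin}(b) to bound $\|x^* - x_{in}\| \le \dist(x^*,X) + \beta m\delta/\alpha_{\min}$. The optimality inequality $F_{\gamma\delta}(x^*) \le F_{\gamma\delta}(x_{in}) = f(x_{in})$ then gives $(\gamma/m)\sum_i h_\delta(x^*; a_i, b_i) \le L\dist(x^*,X) + L\beta m\delta/\alpha_{\min}$. Inserting the strict constant lower bound $\sum_i h_\delta(x^*; a_i, b_i) > \delta/(4\alpha_{\max})$ on the left, replacing $\dist(x^*,X)$ by its upper bound $\sqrt{\delta}$ on the right, and dividing through by $\delta/(4m\alpha_{\max})$ reduces the inequality to $\gamma < 4mL\alpha_{\max}(1/\sqrt{\delta} + \beta m/\alpha_{\min})$, once again contradicting $\gamma \ge \Gamma$.

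The main conceptual obstacle is the choice of comparison point: $\Pi_X[x^*]$ yields the Hoffman-type estimate cleanly but carries a residual penalty of order $\gamma\delta/\alpha_{\min}$ that dominates when $\dist(x^*,X)$ is small, while $x_{in}$ has no residual penalty but may be at distance as large as $\beta m\delta/\alpha_{\min}$ beyond $\dist(x^*,X)$, which is wasteful when $\dist(x^*,X)$ is large. Splitting at $\sqrt{\delta}$ is the natural threshold because in each regime one of the substitutions $\delta \le \sqrt{\delta}\cdot\dist(x^*,X)$ or $\dist(x^*,X) \le \sqrt{\delta}$ brings the incompatible terms to a common scale and reduces the comparison to a linear inequality in $\gamma$ that is forced to fail once $\gamma \ge \Gamma$.
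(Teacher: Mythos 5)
Your proposal is correct and follows essentially the same route as the paper's proof: a contradiction argument with the dichotomy at $\dist(x^*,X)=\sqrt{\d}$, comparing against $\Pi_X[x^*]$ via the Hoffman-based lower bound $\sum_i h_\d \ge \dist(\cdot,X)/\b$ in the far regime, and against the unpenalized point $x_{in}$ of Lemma~\ref{lem:xin} via the violated-constraint bound $h_\d > \d/(4\a_{\max})$ in the near regime, with each branch collapsing to exactly the corresponding term of $\Gamma$. The only difference is cosmetic: you rearrange the optimality inequality into a linear inequality in $\g$ and cancel $\dist(x^*,X)$, whereas the paper keeps the comparison at the level of $F_{\g\d}$ values.
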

 \tat{\begin{remark}\label{rem:param1}
 Note that the conditions for $\d$ and $\g$ in Proposition~\ref{prop:penSol} are not contradictive. Indeed, $\Gamma\le\frac{c}{\d}$ for $\delta$ such that
 \begin{align}\label{eq:param1}
   \sqrt{\d} \le \min\left\{\frac{\sqrt{c^2 + \frac{64\a^2_{\min}Lc}{m\b}}-c}{8\a_{\min}L}, \frac{\sqrt{\a^2_{\min}+\frac{c\b\a_{\min}}{L\a_{\max}}}-\a_{\min}}{2m\b}\right\}.
 \end{align}
 The inequality $\Gamma\le\frac{c}{\d}$ implies existence of $\g$ such that $\Gamma\le\g\le\frac{c}{\d}$ holds for any choice of $c$.
 \end{remark}}
 \begin{proof}
 	Since $f$ has bounded level sets, the solution set $X^*$ and the solution sets $X^*_{\g\d}$
	are nonempty for all $\g>0$ and $\d>0$.
	To arrive at a contradiction, let us assume that there exists some $\g$ and $\d$
	satisfying the conditions in the proposition and that $X^*_{\g\d}\not\subseteq X$.
	Thus, there exists a solution $x^*_{\g\d}\in X^*_{\g\d}$ and $x^*_{\g\d}\not \in X$.
	Define
	\[\hat x^*_{\g\d}=\Pi_{X} [x^*_{\g\d}].\]
	We consider two possibilities:
	$\|\hat x^*_{\g\d} - x^*_{\g\d}\|\ge \sqrt{\delta}$ and  $\|\hat x^*_{\g\d} - x^*_{\g\d}\|< \sqrt{\delta}$.
	
	{\it Case 1: $\|\hat x^*_{\g\d} - x^*_{\g\d}\| \ge \sqrt{\delta}$}.
	By Lemma~\ref{lem:penalty} we have that $h_{\de}(x;a_i,b_i)\ge\dist(x, X_i)$ for all $i=1,\ldots,m$.
	Thus, by the definition of the functions $F_{\g\d}$, for any $x\in\R^n$ we can write
	\[F_{\g\d}(x) \ge f(x) + \frac{\gamma}{m} \sum_{i=1}^{m} \dist(x, X_i).\]
	Then, by Hoffman's lemma (see~Lemma~\ref{lem:hof}), for some $\beta>0$ we have
	\[F_{\g\d}(x) \ge f(x) + \frac{\gamma}{m\beta} \dist(x, X).\]
	Letting $x=x^*_{\g\d}$ in the preceding relation, we obtain
	\begin{align*}
	F_{\g\d}(x^*_{\g\d})
	& \ge f(x^*_{\g\d})
	+ \frac{\gamma}{m\beta} \|\hat x^*_{\g\d} - x^*_{\g\d}\| +f(\hat x^*_{\g\d})-f(\hat x^*_{\g\d}) \cr
	& \ge  \frac{\gamma}{m\beta} \|\hat x^*_{\g\d} - x^*_{\g\d}\|
	- L\|\hat x^*_{\g\d} - x^*_{\g\d}\| +f(\hat x^*_{\g\d}) \cr
	& = \left(\frac{\gamma}{m\beta}  - L \right)\|\hat x^*_{\g\d} - x^*_{\g\d}\|
	+F_{\g\d}(\hat x^*_{\g\d}) - \frac{\gamma}{m}\sum_{i=1}^{m} h_{\d}(\hat x^*_{\g\d};a_i,b_i),
        \end{align*}
        where in the second inequality we use the assumption that the norms of
        the subgradients in the subdifferential set $\partial f(x)$ are bounded by $L$ in a region
        containing the points $x=\hat x^*_{\g\d}$ and $x^*_{\g\d}$ (see Lemma~\ref{lem:Lipconst} and~\eqref{eq:Lipc}).
        Taking into the account that $h_\d(x;a_i,b_i)\le \frac{\d}{4\|a_i\|}$ when $x\in X_i$
        (see inequality~\eqref{eq:hfunineq1} and the definition of the set $X_i$)
        and using $\hat x^*_{\g\d}\in X\subseteq X_i$, we see that
        \[F_{\g\d}(x^*_{\g\d}) \ge
        \left(\frac{\gamma}{m\beta}  - L \right)\|x^*_{\g\d} - \hat x^*_{\g\d}\|
         +F_{\g\d}(\hat x^*_{\g\d}) - \frac{\gamma\d}{4m}\sum_{i=1}^{m}\frac{1} {\|a_i\|}.\]
         Note that the condition $\g\ge \Gamma$ and the definition of $\Gamma$ imply
         $\gamma\ge Lm\beta$\footnote{Indeed, $\g\ge \Gamma\ge 4mL\a_{\max} \left( \frac{1}{\sqrt{\d}} + \frac{\b m}{\a_{\min}}\right)\ge4m^2L\b\ge Lm\beta$.}. Using  the relations $\gamma\ge Lm\beta$
         and $\|\hat x^*_{\g\d} - x^*_{\g\d}\|\ge\sqrt{\delta}$, which we assumed,
         we further obtain
         \begin{align}\label{eq:caseone}
         F_{\g\d}(x^*_{\g\d}) >\left(\frac{\gamma}{m\beta}  - L\right)\sqrt{\d} -  \frac{\g\d}{4\a_{\min}}
         + F_{\g\d}(\hat x^*_{\g\d})\ge F_{\g\d}(\hat x^*_{\g\d}),
         \end{align}
         where the last inequality is obtained by using
         $\left(\frac{\gamma}{m\beta}  - L\right)\sqrt{\d} -  \frac{\g\d}{4\a_{\min}} \ge 0$, which is equivalent to
         \[\frac{\gamma}{m\beta}  - L-\frac{\g\sqrt{\d}}{4\a_{\min}}\ge 0\quad\iff\quad
         \g \left(\frac{1}{m\beta}-\frac{\sqrt{\d}}{4\a_{\min}}\right)\ge L. \]
         The last inequality holds
         due to the conditions that we imposed on the parameters $\gamma$ and $\delta$, namely, that
         $\d < \frac{16\a_{\min}^2}{\b^2 m^2}$ and
          $\gamma\ge L\left(\frac{1}{m\beta} - \frac{\sqrt{\d}}{4\a_{\min}}\right)^{-1}$.
          Thus, relation~\eqref{eq:caseone} implies that $F_{\g\d}(x^*_{\g\d})>F_{\g\d}(\hat x^*_{\g\d})$,
          which contradicts the fact that $x_{\g\d}^*$ is an unconstrained minimizer of $F_{\g\d}$.

          {\it Case 2: $\|\hat x^*_{\g\d} - x^*_{\g\d}\|< \sqrt{\d}$}.
          Since $x^*_{\g\d}\notin X$, under Assumption~\ref{assum:constrSet} and the condition $\d\le\e$,
          we can apply Lemma~\ref{lem:xin} with $x=x^*_{\g\d}$.
          According to Lemma~\ref{lem:xin}, there exists a feasible point $x'_{\g\d}\in X$ such that
          \begin{align}\label{eq:pennonfeas1}
          h_{\d}(x'_{\g\d};a_i,b_i)= 0 \qquad \mbox{ for all $i=1,\ldots, m$,}
          \end{align}
          and
          \begin{align}\label{eq:dist}
          \|x^*_{\g\d} - x'_{\g\d}\|\le \| x^*_{\g\d} - \Pi_X[x^*_{\g\d}]\|+ \frac{\b m \d}{\a_{\min}}.
          \end{align}
          Using the point $x'_{\g\d}$, we have
          \begin{align}\label{eq:ineqfinal1}
          F_{\g\d}(x^*_{\g\d}) - F_{\g\d}(x'_{\g\d})
          & = f(x^*_{\g\d})- f(x'_{\g\d})
          + \frac{\gamma}{m}\left( \sum_{i=1}^{m}h_{\d}(x^*_{\g\d};a_i,b_i) - h_{\d}(x'_{\g\d};a_i,b_i)\right)\cr
          & \ge   - L \|x^*_{\g\d} - x'_{\g\d}\|
          + \frac{\gamma}{m} \sum_{i=1}^{m}h_{\d}(x^*_{\g\d};a_i,b_i),
          \end{align}
          where we use the assumption that $f$ has bounded subgradients over the region
          containing the point $x'_{\g\d}$ (see Lemma~\ref{lem:Lipconst} and~\eqref{eq:Lipc})
          and relation~\eqref{eq:pennonfeas1}.
          Since $x^*_{\g\d}\notin X$, there exists a constraint $j$ that is violated at $x^*_{\g\d}$, i.e., we have
          \[\la a_j, x^*_{\g\d}\ra-b_j>0.\]
          For the violated constraint $j$, by property~\eqref{eq:hfunineq2}, for the penalty function
          $h_\d(\cdot; a_j,b_j)$, we have \begin{align}\label{eq:pennonfeas}
          h_{\d}(x^*_{\g\d};a_j,b_j) > \frac{\d}{4\|a_j\|}\ge \frac{\d}{4\a_{\max}}.
          \end{align}
          Using~\eqref{eq:pennonfeas} and the fact that the penalty functions are non-negative
          (see~\eqref{eq:hfunineq}),
          we obtain
          \begin{align}\label{eq:pennonfeas2}
          \sum_{i=1}^{m}h_{\d}(x^*_{\g\d};a_i,b_i)\ge h_{\d}(x^*_{\g\d};a_j,b_j) >  \frac{\d}{4\a_{\max}}.
           \end{align}
           Substituting estimate~\eqref{eq:pennonfeas2} in relation~\eqref{eq:ineqfinal1} we further obtain
           \begin{align}\label{eq:ineqfinal2}
           F_{\g\d}(x^*_{\g\d}) - F_{\g\d}(x'_{\g\d})
           &> - L \|x^*_{\g\d} - x'_{\g\d}\|+\frac{\g\d}{4m\a_{\max}}\cr
           &\ge - L\left(\|x^*_{\g\d} - \Pi_X[x^*_{\g\d}]\|+ \frac{\b m \d}{\a_{\min}}\right) + \frac{\g\d}{4m\a_{\max}},
         \end{align}
         where the last inequality is obtained by using~\eqref{eq:dist}.
         Since  $\hat x^*_{\g\d}= \Pi_X[x^*_{\g\d}]$ and we work under the assumption that
         $\|\hat x^*_{\g\d} - x^*_{\g\d}\|< \sqrt{\d}$, from~\eqref{eq:ineqfinal2}
         we have
         \be\label{eq:ineqfinal} F_{\g\d}(x^*_{\g\d}) - F_{\g\d}(x'_{\g\d})
         > -L\left(\sqrt{\d}+ \frac{\b m \d}{\a_{\min}}\right) + \frac{\gamma\d}{4m\a_{\max}} \ge0,\ee
         where the last inequality is due to the conditions imposed on $\d$ and $\gamma$,
         namely, the condition that
         $\g\ge4mL\a_{\max}\left( \frac{1}{\sqrt{\d}} + \frac{\b m}{\a_{\min}}\right)$,
         which is equivalent to $\frac{\g\d}{4m\a_{\max}} - L\left(\sqrt{\d}+ \frac{\b m \d}{\a_{\min}}\right) \ge0$.
         Hence, it follows that
         \[F_{\g\d}(x^*_{\g\d}) - F_{\g\d}(x'_{\g\d})>0,\]
         which contradicts the fact that $x_{\g\d}^*$ is an unconstrained minimizer of $F_{\g\d}$.
   \end{proof}

   Let us note that, for the constant $\Gamma$ in Proposition~\ref{prop:penSol}, we have $\Gamma>0$
   in view of the condition
   \[ 0<\d<\frac{16\a_{\min}^2}{\b^2 m^2}.\]
   The condition $\g\d\le c$ in Proposition~\ref{prop:penSol} is imposed only to ensure
   the existence of the subgradient norm bound $L(c,\e)$.
   \tat{In view of Remark~\ref{rem:param1}, one way to think about the choices of $\g$ and $\d$ that satisfy the conditions in
   Proposition~\ref{prop:penSol} is as follows. We first select a large $c$ and determine an estimate $\hat L\ge L(c,\e)$.
   Having $\hat L$, we choose  a penalty value $\d>0$ that satisfies
   $\d<\min\left\{\e,\,\frac{16\a_{\min}^2}{\b^2 m^2}\right\}$ and \eqref{eq:param1}, where $L$ is replaced by $\hat L$.
   Then we set up $\g$ such that $\d\le\g\le \frac{c}{\d}$ holds.}

   Now, we provide a relation between optimal values for the original and penalized problems.
   We consider the cases when $f$ is strongly convex and non-strongly convex, separately.

   The following proposition establishes a key relation between solutions
   $x^*_{\g\d}$ and $x^*$ for the case when $f$ is strongly convex.
   In particular, the proposition provides a set of conditions on the parameters
   $\d$ and $\g$ ensuring that  the distance between $x^*_{\g\d}$ and $x^*$ does not exceed
   a desired accuracy $\d^0$, i.e., $\|x^*_{\g\d} - x^*\|^2\le \d^0$.

   \begin{prop}\label{prop:strong}
   Let $\d^0$ be a given accuracy parameter. Let Assumption~\ref{assum:constrSet} hold and
   let $f$ be strongly convex with a constant $\mu_f>0$.
   Let the parameters $\gamma$ and $\d$ be chosen such that
   \[0<\d < \min\left\{\e,\frac{16\a_{\min}^2}{\b^2 m^2}\right\},\qquad
    \Gamma \le\g\le \frac{2\mu_f \a_{\min}\delta^0}{\de}, \]
    with
    \[\Gamma
    =\max\left\{L\left(\frac{1}{m\beta} - \frac{\sqrt{\de}}{4\a_{\min}}\right)^{-1}, \,4mL\a_{\max}\left( \frac{1}{\sqrt{\d}} + \frac{\b m}{\a_{\min}}\right)\right\},\]
   where $\e$ is the constant from Assumption~\ref{assum:constrSet},
   $\beta$ is the constant from Hoffman's bound (see~Lemma~\ref{lem:hof}), the scalars
   $\a_{\min}$ and $\a_{\max}$ are defined in~\eqref{eq:aminmax},  while
   $L=L(c,\e)$ is the bound on
   the subgradient norms as defined in~\eqref{eq:Lipc} with $c>2\mu_f \a_{\min}\delta^0$.
   Then, the original problem~\eqref{eq:problem} and
   the penalized problem~\eqref{eq:pen-problem}
   have unique solutions, $x^*$ and $x^*_{\g\d}$, respectively, which satisfy the following relation:
   \[\|x^*_{\g\d} - x^*\|^2\le \de^0.\]
 \end{prop}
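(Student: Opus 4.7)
The plan is to chain together three ingredients: (i) feasibility of $x^*_{\g\d}$ via Proposition~\ref{prop:penSol}, (ii) an upper bound on $f(x^*_{\g\d})-f(x^*)$ obtained by comparing the penalized objective at $x^*_{\g\d}$ and $x^*$, and (iii) a quadratic lower bound on $f(x^*_{\g\d})-f(x^*)$ coming from strong convexity together with the first-order optimality of $x^*$ on $X$.

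First I would note that strong convexity of $f$ implies that $f$ (and hence $F_{\g\d}$, being $f$ plus a convex penalty) has bounded level sets and is strongly convex with constant $\mu_f$, so both $X^*$ and $X^*_{\g\d}$ are singletons. The choice $c>2\mu_f\a_{\min}\d^0$ guarantees that the prescribed $\g$ satisfies $\g\d\le 2\mu_f\a_{\min}\d^0<c$, so the bound $L(c,\e)$ used in the definition of $\Gamma$ is valid. All other hypotheses of Proposition~\ref{prop:penSol} are built into the assumed ranges for $\d$ and $\g$. Applying Proposition~\ref{prop:penSol} then yields $x^*_{\g\d}\in X$.

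Next I would compare $F_{\g\d}$ at the two solutions. On one hand, since $x^*_{\g\d}$ is an unconstrained minimizer of $F_{\g\d}$,
\[F_{\g\d}(x^*_{\g\d})\le F_{\g\d}(x^*).\]
Since $x^*\in X$, property~\eqref{eq:hfunineq1} gives $h_\d(x^*;a_i,b_i)\le \d/(4\|a_i\|)\le \d/(4\a_{\min})$, so
\[F_{\g\d}(x^*)\le f(x^*)+\frac{\g\d}{4\a_{\min}}.\]
On the other hand, $x^*_{\g\d}\in X$ and $h_\d\ge 0$ imply $F_{\g\d}(x^*_{\g\d})\ge f(x^*_{\g\d})$. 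Combining these,
\[f(x^*_{\g\d})-f(x^*)\le \frac{\g\d}{4\a_{\min}}.\]

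Finally I would invoke strong convexity: for every $y$,
\[f(y)\ge f(x^*)+\la\nabla f(x^*),y-x^*\ra+\frac{\mu_f}{2}\|y-x^*\|^2,\]
and since $x^*$ minimizes $f$ over $X$, the first-order optimality condition gives $\la\nabla f(x^*),y-x^*\ra\ge 0$ for every $y\in X$. Setting $y=x^*_{\g\d}\in X$ yields
\[\frac{\mu_f}{2}\|x^*_{\g\d}-x^*\|^2\le f(x^*_{\g\d})-f(x^*)\le \frac{\g\d}{4\a_{\min}}.\]
The upper bound $\g\le 2\mu_f\a_{\min}\d^0/\d$ in the hypothesis gives $\g\d/(4\a_{\min})\le \mu_f\d^0/2$, so $\|x^*_{\g\d}-x^*\|^2\le\d^0$.

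I do not expect a serious obstacle, since the heavy lifting (guaranteeing feasibility of $x^*_{\g\d}$, and the existence of the subgradient bound $L$) has already been done in Proposition~\ref{prop:penSol} and Lemma~\ref{lem:Lipconst}; the subtle point is merely bookkeeping on constants, namely checking that $c>2\mu_f\a_{\min}\d^0$ is the right threshold so the admissible range $[\Gamma,2\mu_f\a_{\min}\d^0/\d]$ for $\g$ is compatible with the range $[\Gamma,c/\d]$ required by Proposition~\ref{prop:penSol}.
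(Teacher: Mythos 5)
Your proposal is correct and takes essentially the same route as the paper: feasibility of $x^*_{\g\d}$ via Proposition~\ref{prop:penSol} after verifying $\g\d\le c$, the bound $\g\d/(4\a_{\min})$ on the objective gap via \eqref{eq:hfunineq1}, and strong convexity to convert that gap into $\|x^*_{\g\d}-x^*\|^2\le\d^0$. The only (immaterial) difference is in the final step: the paper invokes strong convexity of $F_{\g\d}$ at its unconstrained minimizer $x^*_{\g\d}$, whereas you invoke strong convexity of $f$ at $x^*$ together with the variational inequality $\la \nabla f(x^*),x^*_{\g\d}-x^*\ra\ge0$; both yield the same constant, though since $f$ is not assumed differentiable you should phrase the latter with a subgradient $s\in\partial f(x^*)$.
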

  \tat{\begin{remark}\label{rem:param2}
 Note that the conditions for $\d$ and $\g$ in Proposition~\ref{prop:strong} are not contradictive. Indeed, $\Gamma\le\frac{2\mu_f \a_{\min}\delta^0}{\d}$ for $\delta$ such that
 \begin{align}\label{eq:param2}
   \sqrt{\d} \le \min\left\{\frac{\sqrt{\tilde{c}^2 + \frac{64\a^2_{\min}L\tilde{c}}{m\b}}-\tilde{c}}{8\a_{\min}L}, \frac{\sqrt{\a^2_{\min}+\frac{\tilde{c}\b\a_{\min}}{L\a_{\max}}}-\a_{\min}}{2m\b}\right\}, \mbox{ where }\tilde{c} = 2\mu_f \a_{\min}\delta^0.
 \end{align}
 The inequality $\Gamma\le\frac{2\mu_f \a_{\min}\delta^0}{\d}$ implies existence of $\g$ such that $\Gamma\le\g\le\frac{2\mu_f \a_{\min}\delta^0}{\d}$ holds for any choice of $\d^0$.
 \end{remark}}
 \begin{proof}
 Since the function $f:\R^n\to\R$ is strongly convex with a constant $\mu_f>0$,
   by the convexity of the penalty function $h_\delta$, the penalized objective function
   $F_{\g\d}$ in~\eqref{eq:penfun}
   is also strongly convex with the same strong convexity constant $\mu_f$,
   for any $\g\ge 0$. Hence, the original problem~\eqref{eq:problem} and
   the penalized problem~\eqref{eq:pen-problem}
   have unique solutions, denoted respectively by
   $x^*$ and $x^*_{\g\d}$.

   By the relations $c>2\mu_f \a_{\min}\delta^0$ and $\g\le \frac{2\mu_f \a_{\min}\delta^0}{\de}$,
   it follows that $\g\d\le c$. Thus, the conditions of Proposition~\ref{prop:penSol} are satisfied.
   According to Proposition~\ref{prop:penSol},
   the vector $x^*_{\g\d}$ is feasible i.e., $x^*_{\g\d}\in X$, implying that
   \be\label{eq:optvals}
   f(x^*)\le f(x^*_{\g\d}).\ee
   Since the penalty functions are non-negative, we have $h_{\d}(x^*_{\g\d};a_i,b_i)\ge 0$ for all
   $i=1,\ldots,m$ (see~\eqref{eq:hfunineq}). The point $x^*$ is feasible but it may be penalized, in which case
   $h_{\d}(x^*;a_i,b_i)\le \frac{\d}{4\|a_i\|}$ for all $i=1,\ldots,m$ (see~\eqref{eq:hfunineq1}).
   Therefore, we have
   \be\label{eq:pendiff}
   h_{\d}(x^*;a_i,b_i) - h_{\d}(x^*_{\g\d};a_i,b_i)\le \frac{\d}{4\|a_i\|}\qquad\hbox{for all $i=1,\ldots,m$}.\ee
   Using the relations~\eqref{eq:optvals} and~\eqref{eq:pendiff}, we obtain
   \begin{align*}
   F_{\g\d}(x^*) &- F_{\g\d}(x^*_{\g\d})\cr& = f(x^*)- f(x^*_{\g\d})
   + \frac{\g}{m}\left( \sum_{i=1}^{m}h_{\d}(x^*;a_i,b_i)- h_{\d}(x^*_{\g\d};a_i,b_i)\right)
    \le  \frac{\g\d}{4\a_{\min}}.
    \end{align*}
    By the strong convexity of $F_{\g\d}$, it follows that
    \begin{align}\label{eq:strconv}
     \|x^* - x^*_{\g\d}\|^2 \le \frac{2}{\mu_f}(F_{\g\d}(x^*) - F_{\g\d}(x^*_{\g\d}))
     \le \frac{\g \de}{2\mu_f\a_{\min}}\le \de^0,
     \end{align}
     where the last inequality in the preceding relation is
     due to the choice of $\gamma\le\frac{2\mu_f \a_{\min}\delta^0}{\de }$.
 \end{proof}

     By slightly adapting the choice of $\g$, we can provide an estimate
     for the  function value $f(x_{\g,\d})$ at a solution $x_{\g,\d}$ of the penalized problem.
     For this, let us define
     \[f^*=\min_{z\in X} f(z).\]
    We have the following result for these optimal values.

\begin{prop}\label{prop:nonstrong}
   Let $\d^0$ be a given accuracy parameter. Let Assumption~\ref{assum:constrSet} hold, and
   assume that $f$ is convex and has bounded level sets.
   Let the parameters $\gamma$ and $\d$ be chosen such that
   \[0<\d < \min\left\{\e,\frac{16\a_{\min}^2}{\b^2 m^2}\right\},\qquad
    \Gamma \le\g\le \frac{4\a_{\min}\delta^0}{\de}, \]
    with
    \[\Gamma
    =\max\left\{L\left(\frac{1}{m\beta} - \frac{\sqrt{\de}}{4\a_{\min}}\right)^{-1},\,4mL\a_{\max}\left( \frac{1}{\sqrt{\d}} + \frac{\b m}{\a_{\min}}\right)\right\},\]
   where $\e$ is the constant from Assumption~\ref{assum:constrSet},
   $\beta$ is the constant from Hoffman's bound (see~Lemma~\ref{lem:hof}), the scalars
   $\a_{\min}$ and $\a_{\max}$ are defined in~\eqref{eq:aminmax},  while
   $L=L(c,\e)$ is the bound on
   the subgradient norms as defined in~\eqref{eq:Lipc} with $c> 4 \a_{\min}\delta^0$.
   Then, we have
   \[0\le f(x_{\g\d}^*) - f^* \le \de^0\qquad\hbox{for any $x_{\g\d}^*\in X_{\g\d}^*$.}\]
 \end{prop}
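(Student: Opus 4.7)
The plan is to mimic the structure of the proof of Proposition~\ref{prop:strong}, but without invoking strong convexity: since we no longer have a quadratic lower bound on $F_{\g\d}(x^*)-F_{\g\d}(x^*_{\g\d})$, we cannot hope to control the distance $\|x^*-x^*_{\g\d}\|$, but we can still control the suboptimality gap $f(x^*_{\g\d})-f^*$ directly from a comparison of penalized values. First I would note that, since $f$ is convex and has bounded level sets, the solution set $X^*$ of \eqref{eq:problem} is nonempty, and so is $X^*_{\g\d}$ for every $\g>0,\ \d>0$ (by Corollary~\ref{cor:bound}).

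Next I would verify that the conditions of Proposition~\ref{prop:penSol} are in force. The product bound $\g\d\le c$ follows directly from the chosen upper bound $\g\le \frac{4\a_{\min}\d^0}{\d}$ together with the assumption $c> 4\a_{\min}\d^0$. All other conditions ($0<\d<\min\{\e,16\a_{\min}^2/(\b^2m^2)\}$ and $\g\ge \Gamma$) are assumed verbatim. Hence every $x^*_{\g\d}\in X^*_{\g\d}$ is feasible for \eqref{eq:problem}, which immediately yields the lower bound $f^*\le f(x^*_{\g\d})$, i.e., $f(x^*_{\g\d})-f^*\ge 0$.

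For the upper bound I would pick any $x^*\in X^*$ and exploit the unconstrained optimality of $x^*_{\g\d}$: $F_{\g\d}(x^*_{\g\d})\le F_{\g\d}(x^*)$. Rearranging the definition~\eqref{eq:penfun},
\begin{equation*}
 f(x^*_{\g\d})-f(x^*)\le \frac{\g}{m}\sum_{i=1}^m\bigl(h_\d(x^*;a_i,b_i)-h_\d(x^*_{\g\d};a_i,b_i)\bigr).
\end{equation*}
Now $x^*$ is feasible, so \eqref{eq:hfunineq1} gives $h_\d(x^*;a_i,b_i)\le \frac{\d}{4\|a_i\|}$; $x^*_{\g\d}$ is feasible by the step above, so by \eqref{eq:hfunineq} $h_\d(x^*_{\g\d};a_i,b_i)\ge 0$. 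Combining these and using the definition of $\a_{\min}$,
\begin{equation*}
 f(x^*_{\g\d})-f^*=f(x^*_{\g\d})-f(x^*)\le \frac{\g}{m}\sum_{i=1}^m \frac{\d}{4\|a_i\|}\le \frac{\g\d}{4\a_{\min}}.
\end{equation*}
Finally, the upper bound $\g\le \frac{4\a_{\min}\d^0}{\d}$ yields $\frac{\g\d}{4\a_{\min}}\le \d^0$, completing the argument.

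There is no real obstacle here; the proof is essentially a lighter version of Proposition~\ref{prop:strong}'s argument, and the only thing worth checking carefully is that the chosen range for $\g\d$ is consistent with the applicability of Proposition~\ref{prop:penSol} (which is why the statement tightens the bound from $c$ to $4\a_{\min}\d^0<c$), so that the feasibility step that underpins both inequalities can be invoked.
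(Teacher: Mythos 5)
Your proposal is correct and follows essentially the same route as the paper's own proof: establish feasibility of $X^*_{\g\d}$ via Proposition~\ref{prop:penSol} (checking $\g\d\le 4\a_{\min}\d^0< c$) to get the lower bound, then compare $F_{\g\d}(x^*_{\g\d})\le F_{\g\d}(x^*)$ and bound the penalty terms by $\frac{\g\d}{4\a_{\min}}\le\d^0$. The only cosmetic remark is that the nonnegativity $h_\d(x^*_{\g\d};a_i,b_i)\ge 0$ in \eqref{eq:hfunineq} holds for all $x\in\R^n$, so you do not need feasibility of $x^*_{\g\d}$ for that particular step (only for the lower bound $f^*\le f(x^*_{\g\d})$).
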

   \tat{\begin{remark}\label{rem:param3}
 Note that the conditions for $\d$ and $\g$ in Proposition~\ref{prop:nonstrong} are not contradictive. Indeed, $\Gamma\le\frac{4 \a_{\min}\delta^0}{\d}$ for $\delta$ such that
 \begin{align}\label{eq:param3}
   \sqrt{\d} \le \min\left\{\frac{\sqrt{\tilde{c}^2 + \frac{64\a^2_{\min}L\tilde{c}}{m\b}}-\tilde{c}}{8\a_{\min}L}, \frac{\sqrt{\a^2_{\min}+\frac{\tilde{c}\b\a_{\min}}{L\a_{\max}}}-\a_{\min}}{2m\b}\right\}, \mbox{ where }\tilde{c} = 4 \a_{\min}\delta^0.
 \end{align}
 The inequality $\Gamma\le\frac{2\mu_f \a_{\min}\delta^0}{\d}$ implies existence of $\g$ such that $\Gamma\le\g\le\frac{4\a_{\min}\delta^0}{\d}$ holds for any choice of $\d^0$.
 \end{remark}}
 \begin{proof}
  By the assumption that $f$ has bounded level sets, the solution sets $X^*$ and $X^*_{\g\d}$,
   for any $\d,\g\ge0$, are nonempty.
  In view of relations $c> 4 \a_{\min}\delta^0$ and $\g\le \frac{4\a_{\min}\delta^0}{\de}$, it follows that
  $\g\d\le c$. Hence, all the conditions of Proposition~\ref{prop:penSol} are satisfied.
  By Proposition~\ref{prop:penSol}, the solutions of the penalized problem are feasible,
   i.e., $X^*_{\g\d}\subseteq X$, implying that
   \be
   0\le f(x^*_{\g\d}) - f^*.\ee

  Now, let $x^*_{\g\d}\in X^*_{\g\d}$ and $x^*\in X^*$ be arbitrary solutions, and consider the difference
  $F_{\g\d}(x^*_{\g\d}) - F_{\g\d}(x^*).$
  By the definition of the functions $F_{\g\d}$ we have
  \[F_{\g\d}(x^*_{\g\d}) - F_{\g\d}(x^*)
  =f(x^*_{\g\d}) - f(x^*)+ \frac{\gamma}{m}\sum_{i=1}^{m}\left(h_{\d}(x^*_{\g\d};a_i,b_i) - h_{\d}(x^*;a_i,b_i)\right).\]
  Since  $x^*_{\g\d}\in X^*_{\g\d}$, it follows that $F_{\g\d}(x^*_{\g\d}) - F_{\g\d}(x^*)\le0$,
  thus implying that
  \[f(x^*_{\g\d}) - f(x^*)\le
  \frac{\gamma}{m}\sum_{i=1}^{m}\left(h_{\d}(x^*;a_i,b_i) - h_{\d}(x^*_{\g\d};a_i,b_i)\right).\]
  The functions $h_{\d}(\cdot;a_i,b_i)$ are nonnegative, so it follows that
  \[f(x^*_{\g\d}) - f(x^*)\le
  \frac{\gamma}{m}\sum_{i=1}^{m}h_{\d}(x^*;a_i,b_i).\]
  In view of the maximum penalty over feasible region (cf.~\eqref{eq:hfunineq1}) and since $x^*\in X$,
  we have that $h_{\d}(x^*;a_i,b_i)\le \frac{\d}{4\|a_i\|}$ for all $i=1,\ldots,m$. Therefore,
  \[f(x^*_{\g\d}) - f(x^*)\le \frac{\g\d}{4\a_{\min}}.\]
  By the condition $\g\le \frac{4 \a_{\min}\delta^0}{\d}$, it follows that
  \[f(x^*_{\g\d}) - f(x^*)\le \d^0.\]
  \end{proof}

\tat{Propositions~\ref{prop:penSol}-\ref{prop:nonstrong} above connect the solution set of the penalized problem~\eqref{eq:pen-problem} with the solution set of the original problem~\eqref{eq:problem}. They demonstrate that under an appropriate choice of the penalty parameters $\g$ and $\d$ every solution of the penalized problem is feasible and its distance to the solution of~\eqref{eq:problem} does not exceed a predefined constant $\d^0$. This choice depends in particular on the number of constraints $m$, $\a_{\min}$, $\a_{\max}$, Hoffman's constant $\b$, the upper bound of the gradients $L$ in some suitable region.  It is worth noting that a large number of constraints causes a large penalty constant $\g$ and a small smoothness parameter $\d$, what in its turn can lead to an ill-conditioned penalized problem. Moreover, the constants $\b$ and $L$ might be difficult to estimate for a given optimization problem~\eqref{eq:problem}. In the next section we consider a fast incremental method to find a solution of the penalized problem~\eqref{eq:pen-problem} with constant $\g$ and $\d$. According to Propositions~\ref{prop:penSol}-\ref{prop:nonstrong}, this solution will provide a feasible but an inexact solution for the original problem. After addressing this approach, in Section~\ref{sec:varpar}  we will turn our attention to time-varying penalty parameters which do not require an apriori knowledge about $\b$ and $L$. }

\section{Applying SAGA to Penalized Problem}\label{sec:incr}
In this section we formulate fast incremental methods
to find a solution of the penalized problem~\eqref{eq:pen-problem}. Moreover, with the results in Proposition~\ref{prop:strong} and Proposition~\ref{prop:nonstrong} in place we present convergence results for the original optimization problem~\eqref{eq:problem}.

Recently, many algorithms have been proposed to incrementally solve the following optimization problem of minimizing the average sum of functions:
\begin{align}\label{eq:genpr}
 \min_{x\in\R^n} G(x),\qquad G(x) = \frac 1N\sum_{i=1}^N g_i(x).
\end{align}
Among these algorithms are, for example, SAG, SAGA, and SVRG~\cite{saga, finito, svrg},
which leverage the idea to randomly sample the full gradient by processing only one function per iteration
in a way to reduce the variance in the gradient estimation.
Under the assumption of Lipschitz continuous gradients $\nabla g_i$, these algorithms possess the same asymptotic convergence rate to an optimal solution as the standard full gradient method requiring
the full sum of the gradients $\nabla g_i$ at each iteration.
More precisely, given an optimal choice of step size parameters, the aforementioned incremental methods  approach an optimal solution with the convergence rate $O(q^t)$, $q\in(0,1)$, in the case of strongly convex function $G$, and the convergence rate $O(1/t)$ in the case of non-strongly convex function $G$.

As an example of a fast incremental method, we will consider
the SAGA algorithm\footnote{The SAGA method in~\cite{saga} is formulated for a composite objective function
$G(x) = \frac 1N\sum_{i=1}^N g_i(x)+h(x)$, where  the  proximal  operator associated with the convex function
$h$ is  easy  to  evaluate. However, in our setting, it suffices to consider the case $h(x)=0$.}.
The algorithm is summarized as follows.
\begin{algorithm}
\caption{SAGA Algorithm}\label{eq:saga}
0. Let $x^0\in\R^n$ and $\nabla g_i(\phi_i^0)$ with $\phi_i^0 = x^0$, $i=1,\ldots,N$, be known.
\smallskip

1. Pick an index  $j$ uniformly at random.
\smallskip

2. $\phi_j^{t+1} = x^t$  and store $\nabla g_j(\phi_j^{t+1})$.
\smallskip

3.  $x^{t+1} = x^t - \alpha\left[\nabla g_j(\phi_j^{t+1}) - \nabla g_j(\phi_j^{t}) + \frac 1N\sum_{i=1}^N \nabla g_i(\phi_j^{t})\right]$.
\end{algorithm}

The main result for the SAGA algorithm is formulated in the following theorem,
which is adapted from~\cite{saga}.

\begin{thrm}\label{th:saga}(\cite{saga})
\begin{itemize}
\item[(a)]
 Let the functions $g_i$, $i=1,\ldots, N$, be strongly convex with a parameter $\mu>0$ and have Lipschitz continuous gradients with a constant $L_g>0$. Let $x_g^*$ be the solution of the problem~\eqref{eq:genpr}.
 Then, if the step size $\alpha = \frac{1}{2(\mu N + L_g)}$ is chosen in SAGA algorithm, then the following convergence rate result holds:
 $$\mathbb E\|x^t-x_g^*\|^2\le O\left(q^t\right),\qquad q=1-\frac{\mu}{2(\mu N+L_g)}.$$
\item[(b)]
 Let the functions $g_i$, $i=1,\ldots, N$, be non-strongly convex and have Lipschitz continuous gradients with a constant $L_g>0$. Let $G^*$ be the optimal value of the problem~\eqref{eq:genpr} and $\bar x^t = \frac 1t\sum_{k=1}^t x^k$.
 Then, if the step size $\alpha = \frac{1}{3 L_g}$ is chosen, then the following convergence rate result is valid:
 $$\mathbb E[G(\bar x^t)]-G^*\le O\left(\frac{4N}{t}\right).$$\
 \end{itemize}
\end{thrm}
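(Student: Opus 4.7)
The statement is a direct citation of the SAGA convergence result, so the cleanest route is to reproduce the Lyapunov argument from \cite{saga} rather than reinvent it. The plan is to define a Lyapunov function that couples the iterate error with a term measuring how stale the stored gradient table is, show that this Lyapunov function contracts in expectation at each step, and then translate that contraction into the two rate statements.

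More concretely, for part (a) I would set
\[
T^t \;=\; \frac{1}{N}\sum_{i=1}^{N}\bigl\|\nabla g_i(\phi_i^t)-\nabla g_i(x_g^*)\bigr\|^2 \;+\; c\,\|x^t-x_g^*\|^2
\]
for a constant $c>0$ to be tuned. Using that the index $j$ at iteration $t$ is sampled uniformly and independently of $\mathcal{F}^t$, the SAGA update can be written as $x^{t+1}=x^t-\alpha\, v^t$ where $\mathbb{E}[v^t\mid\mathcal{F}^t]=\nabla G(x^t)$. The first key step is to bound $\mathbb{E}\|v^t-\nabla G(x_g^*)\|^2$ in terms of $T^t$ and the usual $L_g$\nobreakdash-smoothness/$\mu$\nobreakdash-strong\nobreakdash-convexity inequalities; this is where the variance reduction shows up, because the cross terms between the stored and the fresh gradients cancel in expectation. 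After expanding $\mathbb{E}[\|x^{t+1}-x_g^*\|^2\mid\mathcal{F}^t]$ via the standard one-step identity, and separately expanding the table term using the fact that exactly one of the $N$ summands is refreshed to $x^t$, one obtains $\mathbb{E}[T^{t+1}\mid\mathcal{F}^t]\le(1-\kappa^{-1})T^t$ with $\kappa=2(\mu N+L_g)/\mu$ for the choice $\alpha=1/(2(\mu N+L_g))$ and an appropriate $c$. Iterating this contraction yields $\mathbb{E}\|x^t-x_g^*\|^2=O(q^t)$ with $q=1-\mu/(2(\mu N+L_g))$.

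For part (b), I would use the same style of Lyapunov function but, instead of strong convexity, apply the convex descent inequality $G(x^t)-G^*\le\langle\nabla G(x^t),x^t-x_g^*\rangle$ combined with the variance bound above, chosen so that the coefficient of $\|x^t-x_g^*\|^2$ cancels in a telescoping sense. Summing the resulting one-step inequality from $1$ to $t$ and using Jensen's inequality on $G$ applied to $\bar x^t=\frac1t\sum_{k=1}^t x^k$, the telescoped distance-squared terms contribute $O(\|x^0-x_g^*\|^2)$, the stored-gradient terms contribute $O(N)$ because each summand in the table can be charged only after it has been refreshed, and dividing by $t$ gives $\mathbb{E}[G(\bar x^t)]-G^*=O(N/t)$ once the step size is set to $\alpha=1/(3L_g)$.

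The main obstacle in both cases is the accounting of the table term: one must exploit precisely that the update refreshes a single, uniformly random coordinate, and choose the constant $c$ (and, in part (b), the averaging weights) so that the unfavorable $O(1)$ change in the refreshed coordinate is dominated by the geometric decay (resp. telescoping) of the other contributions. Everything else is routine manipulation of smoothness and (strong) convexity, so the bulk of the effort is verifying that the tuned constants $\alpha$, $c$, and the resulting $q$ match those stated in Theorem~\ref{th:saga}; the remainder of the argument then follows \cite{saga} verbatim.
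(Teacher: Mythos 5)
This statement is imported verbatim (up to notation) from the SAGA paper and the present paper supplies no proof of it whatsoever --- it is used as a black box --- so there is no in-paper argument to compare yours against. Your sketch is a faithful outline of the original analysis in \cite{saga}: a coupled Lyapunov function combining the iterate error with a table-staleness term, a variance bound on the unbiased gradient estimator exploiting that exactly one uniformly random table entry is refreshed per step, a one-step contraction for the strongly convex case, and a telescoping/Jensen argument for the averaged iterate in the convex case. The one substantive deviation is your choice of table term $\frac{1}{N}\sum_i\|\nabla g_i(\phi_i^t)-\nabla g_i(x_g^*)\|^2$; the original uses the Bregman-type quantity $\frac{1}{N}\sum_i\bigl(g_i(\phi_i^t)-g_i(x_g^*)-\langle\nabla g_i(x_g^*),\phi_i^t-x_g^*\rangle\bigr)$, and the two are related only through the smoothness inequality $\|\nabla g_i(\phi)-\nabla g_i(x_g^*)\|^2\le 2L_g\bigl(g_i(\phi)-g_i(x_g^*)-\langle\nabla g_i(x_g^*),\phi-x_g^*\rangle\bigr)$. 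A contraction argument can be pushed through with your version, but the specific constants $\alpha=\frac{1}{2(\mu N+L_g)}$, $q=1-\frac{\mu}{2(\mu N+L_g)}$, and $\alpha=\frac{1}{3L_g}$ quoted in the theorem are tuned to the Bregman form, so to recover exactly these values you would either need to switch to that Lyapunov function or accept slightly degraded constants. Since the theorem is a citation, this is a cosmetic rather than a substantive issue.
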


By applying Algorithm~\ref{eq:saga} to the penalized problem~\eqref{eq:pen-problem} under our consideration, namely by taking $g_i(x)=f(x)+\gamma h_\d\left(x; a_i, b_i\right)$, we get the following incremental algorithm to find its solution.

\begin{algorithm}
\caption{SAGA-based Fast Incremental Method for Solving Penalized Problem}\label{eq:saga_pen}
0. Let $x^0\in\R^n$ and $\nabla f(\phi_i^0) + \gamma \nabla h_\d\left(\phi_i^0; a_i, b_i\right)$ with $\phi_i^0 = x^0$, $i=1,\ldots,m$, be known.
\smallskip

1. Pick an index  $j$ uniformly at random.
\smallskip

2. $\phi_j^{t+1} = x^t$ and store $\nabla g_j(\phi_j^{t+1})$.
\smallskip

3. $x^{t+1} = x^t - \alpha[\nabla f(\phi_j^{t+1}) + \gamma \nabla h_\d\left(\phi_j^{t+1}; a_j, b_j\right) - \nabla f(\phi_j^t) - \gamma \nabla h_\d\left(\phi_j^t; a_j, b_j\right)\\  \mbox{\qquad\qquad\qquad\qquad\qquad\qquad\qquad\qquad\qquad}+  \frac 1m\sum_{i=1}^m \left(\nabla f(\phi_i^t) + \gamma \nabla h_\d\left(\phi_i^t; a_i, b_i\right)\right)].     $
\end{algorithm}

In terms of the original optimization problem~\eqref{eq:problem} the following result holds, as a
direct consequence
of Theorem~\ref{th:saga}, and Proposition~\ref{prop:strong} and Proposition~\ref{prop:nonstrong}.

\begin{thrm}\label{th:saga_pen}
Let Assumption~\ref{assum:constrSet} hold.
\begin{itemize}
\item[(a)]
Let the function $f$ be strongly convex with a parameter $\mu_f>0$ and have Lipschitz continuous gradients with a constant $L_f>0$. Let $x^*$ be the solution of the problem \eqref{eq:problem}.
Assume that an accuracy level $\delta^0$ is given, the penalty parameters $\gamma$ and $\delta$ are chosen to satisfy the conditions of Proposition~\ref{prop:strong}, and the step size
 $\alpha = \frac{1}{2(\mu_f m+ L_f + \frac{\g\a_{\max}}{2\d})}$ is selected.
 Then, the following convergence rate result is valid for the iterates of Algorithm~\ref{eq:saga_pen}:
 \begin{align*}
  \mathbb E[\|x^t-\Pi_X[x^t]\|^2]\le O\left(q_{\g}^t\right),\quad
 \mathbb E[\|x^t-x^*\|]^2&\le O\left(q_{\g}^t\right) + 2\delta^0,\cr
 &q_{\g}=1-\frac{\mu_f}{2(\mu_f m +L_f + \frac{\g\a_{\max}}{2\d})}.
 \end{align*}
\item[(b)]
 Let the function $f$ be convex, have bounded level sets and have Lipschitz continuous gradients
 with a constant $L_f>0$. Let $f^*$ be the optimal value problem~\eqref{eq:problem}.
 Suppose that a desired accuracy level $\delta^0$ is given, the penalty parameters $\gamma$ and $\delta$
 are chosen to satisfy the conditions of Proposition~\ref{prop:nonstrong}, and the step size $\a$ is given by
 $\alpha = \frac{1}{3(L_f+\frac{\g\a_{\max}}{2\d})}$.
 Then, for the averages $\bar x^t = \frac{1}{t}\sum_{k=1}^t x^k$ of the iterates $x^k$ generated  by
 Algorithm~\ref{eq:saga_pen} the following holds:
 \[E [f(\bar x^t)] - f^*\le O\left(\frac{4m}{t}\right)+2\delta^0\qquad\hbox{for all }t,\]
 and for any $ \tilde \e>0$ there exists $T>0$ such that for all
 $t>T$,
 $$-\g\tilde\e\le\mathbb E [f(\bar x^t)] - f^*.$$
 \end{itemize}
\end{thrm}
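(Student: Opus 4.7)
The central idea is to recognize the penalized objective~\eqref{eq:penfun} as a finite-sum~\eqref{eq:genpr} with $N=m$ and $g_i(x)=f(x)+\gamma h_\d(x;a_i,b_i)$, and then invoke Theorem~\ref{th:saga} directly. Each $g_i$ inherits strong convexity from $f$ (with constant $\mu_f$ in part (a), since $h_\d$ is convex), and its gradient $\nabla g_i=\nabla f+\gamma\nabla h_\d(\cdot;a_i,b_i)$ is Lipschitz with constant at most $L_f+\gamma\|a_i\|/(2\d)\le L_f+\gamma\a_{\max}/(2\d)$, by the smoothness estimate~\eqref{eq:Lipc-gradh}. This identifies the SAGA step size and the rate constant $q_\g$ exactly as stated. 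Under the assumed bounds on $\gamma$ and $\d$, the hypotheses of Proposition~\ref{prop:strong} (respectively Proposition~\ref{prop:nonstrong}) are satisfied, so any $x^*_{\g\d}\in X^*_{\g\d}$ is feasible for~\eqref{eq:problem}; this feasibility is what will bridge the SAGA guarantees on the penalized problem to guarantees on the original one.

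For part (a), Theorem~\ref{th:saga}(a) with the above parameters yields $\mathbb{E}\|x^t-x^*_{\g\d}\|^2\le O(q_\g^t)$. Since $x^*_{\g\d}\in X$ (Proposition~\ref{prop:penSol}), the projection is the closest point in $X$, so $\|x^t-\Pi_X[x^t]\|\le\|x^t-x^*_{\g\d}\|$, giving the feasibility-error rate. For the distance to $x^*$, I would apply the triangle inequality $\|x^t-x^*\|^2\le 2\|x^t-x^*_{\g\d}\|^2+2\|x^*_{\g\d}-x^*\|^2$ and combine the SAGA rate with the bound $\|x^*_{\g\d}-x^*\|^2\le\d^0$ from Proposition~\ref{prop:strong}.

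For part (b), Theorem~\ref{th:saga}(b) delivers $\mathbb{E}[F_{\g\d}(\bar x^t)]-F_{\g\d}^*\le O(4m/t)$. For the upper bound, I would use that the penalties are nonnegative so $f\le F_{\g\d}$ pointwise, combined with the estimate $F_{\g\d}^*\le f^*+2\d^0$ obtained by evaluating $F_{\g\d}$ at $x^*_{\g\d}\in X$, invoking Proposition~\ref{prop:nonstrong} for $f(x^*_{\g\d})-f^*\le\d^0$, and bounding the penalty term at a feasible point by $\g\d/(4\a_{\min})\le\d^0$ via~\eqref{eq:hfunineq1} and $\gamma\le 4\a_{\min}\d^0/\d$. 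Adding the two pieces yields the stated $O(4m/t)+2\d^0$ bound.

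The main obstacle is the asymptotic lower bound in part (b). My plan is to combine Hoffman's lemma with the Lipschitz continuity of $f$ (with constant $L=L(c,\e)$ from~\eqref{eq:Lipc} valid on the relevant ball from Lemma~\ref{lem:Lipconst}) to produce a quantitative control on the infeasibility of $\bar x^t$. Concretely, Lemma~\ref{lem:penalty} gives $h_\d(x;a_i,b_i)\ge\dist(x,X_i)$, which together with Hoffman's estimate yields
\begin{equation*}
F_{\g\d}(\bar x^t)\ge f(\bar x^t)+\frac{\gamma}{m\beta}\dist(\bar x^t,X)\ge f^{*}+\Bigl(\frac{\gamma}{m\beta}-L\Bigr)\dist(\bar x^t,X),
\end{equation*}
using $f(\bar x^t)\ge f(\Pi_X[\bar x^t])-L\dist(\bar x^t,X)\ge f^*-L\dist(\bar x^t,X)$. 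Since the condition on $\gamma$ implies $\gamma>Lm\beta$, one can solve for $\dist(\bar x^t,X)$ and substitute back to get $f(\bar x^t)-f^*\ge-\tfrac{Lm\beta}{\gamma-Lm\beta}(F_{\g\d}(\bar x^t)-f^*)$. Taking expectations and using both the SAGA rate and $F_{\g\d}^*-f^*\le\d^0$, one obtains $\mathbb{E}[f(\bar x^t)]-f^*\ge-\tfrac{Lm\beta}{\gamma-Lm\beta}\bigl(O(4m/t)+\d^0\bigr)$. Given any $\tilde\e>0$, choosing $T$ so that the $O(4m/t)$ term is dominated by the remaining slack reduces the right-hand side below $\g\tilde\e$ in absolute value for all $t>T$, completing the argument.
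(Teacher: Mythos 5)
Your part (a) and the upper bound in part (b) follow the paper's proof essentially verbatim: identify $g_i=f+\g h_\d(\cdot;a_i,b_i)$, read off the strong convexity constant $\mu_f$ and the gradient Lipschitz constant $L_f+\frac{\g\a_{\max}}{2\d}$ from~\eqref{eq:Lipc-gradh}, invoke Theorem~\ref{th:saga}, and transfer the guarantees to the original problem through the feasibility of $x^*_{\g\d}$ (Proposition~\ref{prop:penSol}) together with Propositions~\ref{prop:strong} and~\ref{prop:nonstrong}. Those pieces are correct.

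The gap is in your lower bound for part (b). Your chain yields
$\mathbb E[f(\bar x^t)]-f^*\ge -\frac{Lm\b}{\g-Lm\b}\left(\mathbb E[F_{\g\d}(\bar x^t)]-f^*\right)$,
and the quantity in parentheses does \emph{not} vanish as $t\to\infty$: it converges to $F^*_{\g\d}-f^*$, which is in general strictly positive (it is bounded by $2\d^0$, not by zero, since $x^*$ itself is penalized). Hence your argument only gives $\liminf_{t\to\infty}\left(\mathbb E[f(\bar x^t)]-f^*\right)\ge -\frac{Lm\b}{\g-Lm\b}\,(F^*_{\g\d}-f^*)$, a fixed negative constant of order $\d^0$, whereas the theorem asserts that for \emph{every} $\tilde\e>0$ the bound $-\g\tilde\e$ eventually holds, i.e.\ the liminf is nonnegative. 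A fixed negative constant cannot be dominated by $-\g\tilde\e$ for arbitrarily small $\tilde\e$. The loss comes from anchoring the comparison at $f^*$ and the feasible set $X$; the residual $F^*_{\g\d}-f^*$ is precisely the penalty gap that this route cannot remove. The paper instead anchors at the minimizer set of the \emph{penalized} problem: by Jensen's inequality and $F_{\g\d}(\mathbb E[\bar x^t])-F^*_{\g\d}\ge0$, together with the fact that each $h_\d(\cdot;a_i,b_i)$ is $1$-Lipschitz (see~\eqref{eq:gradh}), one obtains $\mathbb E[f(\bar x^t)]-f^*\ge -\g\,\|\mathbb E[\bar x^t]-\Pi_{X^*_{\g\d}}[\mathbb E[\bar x^t]]\|$, and this distance tends to zero because $F_{\g\d}(\mathbb E[\bar x^t])\to F^*_{\g\d}$ and $F_{\g\d}$ has bounded level sets. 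To salvage your Hoffman-based plan you would need to bound the infeasibility by a quantity that actually vanishes, such as $\dist(\bar x^t,X^*_{\g\d})$, which is in effect the paper's argument.
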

\begin{proof}
We apply Theorem~\ref{th:saga} to the problem \eqref{eq:pen-problem} with the objective function
 \[F_{\g\d} = \frac 1m\sum_{i=1}^m g_i(x),\]
 where $g_i(x)=f(x)+\gamma h_\d\left(x; a_i, b_i\right)$, $i=1,\ldots,m$.
 Recall that, if the function $f$ is strongly convex with a constant $\mu_f>0$, then the functions $g_i(x)$ are strongly convex with the same constant
 $\mu_f$. Moreover, the gradients of the functions $g_i(x)$ are Lipschitz continuous with the constant $L_f + \frac{\g\a_{\max}}{2\d}$,
since each penalty function $h_\d\left(x; a_i, b_i\right)$, $i=1,\ldots,m$,
 has the Lipschitz continuous gradient with the constant $\frac{\|a_i\|}{2\d}$ (see~\eqref{eq:Lipc-gradh}).

 To obtain the result in part (a), let us notice that, according to Proposition~\ref{prop:penSol}
 we have $x^*_{\g\d}\in X$. Hence,
 $$\mathbb E[\|x^t-\Pi_X[x^t]\|^2]\le \mathbb E[\|x^t-x^*_{\g\d}\|^2].$$
 Thus, due to Theorem~\ref{th:saga}(a), Proposition~\ref{prop:strong}, and
 the inequality $\|a+b\|^2\le 2\|a\|^2 + 2\|b\|^2$, which is valid for any $a,b\in\R^n$, we  conclude that
 $$\mathbb E[\|x^t-\Pi_X[x^t]\|^2]\le O\left(q_{\g}^t\right),\qquad \mathbb E[\|x^t-x^*\|^2]\le O\left(q_{\g}^t\right) + 2\delta^0,$$
 where $q_{\g}=1-\frac{\mu_f}{2(\mu_f m +L_f + \frac{\g}{2\d})}$.

To prove part (b),
 we consider $\mathbb E [F_{\g\d}(\bar x^t)] - F^*_{\g\d}$, for which, according to Theorem~\ref{th:saga}(b), we have
 \begin{align}\label{eq:ineq}
   0\le \mathbb E [F_{\g\d}(\bar x^t)] - F^*_{\g\d} \le  O\left(\frac{4m}{t}\right).
 \end{align}
By using the definition of the penalty function  $F_{\g\d}$, for any $x^*_{\g\d}\in X^*_{\g\d}$ we can write
\[  \mathbb E [F_{\g\d}(\bar x^t)] - F^*_{\g\d}
=\mathbb E [f(\bar x^t)] - f(x^*_{\g\d}) + \frac{\g}{m}\sum_{i=1}^{m}\left\{\mathbb E [h_{\d}(\bar x^t; a_i, b_i)] -  h_{\d}(x^*_{\g\d}; a_i, b_i)\right\}.\]
From the preceding relation, using the fact that the functions $h_{\d}(\cdot; a_i, b_i)$ are nonnegative
and using~\eqref{eq:ineq}, we obtain
\[\mathbb E [f(\bar x^t)] - f(x^*_{\g\d})
\le O\left(\frac{4m}{t}\right)
+ \frac{\g}{m}\sum_{i=1}^{m} h_{\d}(x^*_{\g\d}; a_i, b_i).\]
By adding and subtracting $f^*$ and re-arranging the terms, we further obtain
\[\mathbb E [f(\bar x^t)] - f^*
\le O\left(\frac{4m}{t}\right) +f(x^*_{\g\d})-f^*+ \frac{\g}{m}\sum_{i=1}^{m} h_{\d}(x^*_{\g\d}; a_i, b_i).\]
By Proposition~\ref{prop:nonstrong}, we have $f(x^*_{\g\d}) - f^*\le\d^0$, implying that
\[\mathbb E [f(\bar x^t)] - f^*
\le O\left(\frac{4m}{t}\right) +\d^0+ \frac{\g}{m}\sum_{i=1}^{m} h_{\d}(x^*_{\g\d}; a_i, b_i).\]
By Proposition~\ref{prop:penSol}, the point $x_{\g\d}^*$ is feasible for the original problem, so that we have
$h_{\d}(x^*_{\g\d}; a_i, b_i)\le \frac{\d}{4\|a_i\|}$ for all $i=1,\ldots,m$ (see~\eqref{eq:hfunineq1}). Therefore,
for all $t$ we have
\[\mathbb E [f(\bar x^t)] - f^*
\le O\left(\frac{4m}{t}\right) +\d^0+ \frac{\g\d}{4\a_{\min}}\le O\left(\frac{4m}{t}\right) +2\d^0,\]
where the last inequality follows in view of the condition $\g\le \frac{4\a_{\min}\d^0} {\d}$
of Proposition~\ref{prop:nonstrong}.

Next, we provide a lower bound on $\mathbb E [f(\bar x^t)] - f^*$.
We write
\[\mathbb E [f(\bar x^t)] - f^*=\mathbb E [f(\bar x^t)] -  f(x^*_{\g\d}) + f(x^*_{\g\d})-f^*
\ge \mathbb E [f(\bar x^t)] -  f(x^*_{\g\d}),\]
where $x^*_{\g\d}$ is an arbitrary solution of the penalized problem, i.e., $F(x^*_{\g\d})= F^*_{\g\d}$,
and  the last inequality is obtained by using
$f(x^*_{\g\d})-f^*\ge0$ (see Proposition~\ref{prop:nonstrong}).
By using the convexity of $f$, we further have
\[[\mathbb E [f(\bar x^t)] - f^*\ge f( \mathbb E [\bar x^t]) -  f(x^*_{\g\d}).\]
By the definition of the penalty function, we have
\[f(\mathbb E [\bar x^t]) - f(x^*_{\g\d})=F_{\g\d}(\mathbb E [\bar x^t]) - F^*_{\g\d}
- \frac{\g}{m}\sum_{i=1}^{m}\left\{h_{\d}(\mathbb E [\bar x^t]; a_i, b_i) -  h_{\d}(x^*_{\g\d}; a_i, b_i)\right\}.\]
Hence, for any $x^*_{\g\d}\in X^*_{\g\d}$,
\begin{align*}
\mathbb E [f(\bar x^t)] - f^*
&\ge F_{\g\d}(\mathbb E [\bar x^t]) - F^*_{\g\d}
+ \frac{\g}{m}\sum_{i=1}^{m}\left\{h_{\d}(x^*_{\g\d}; a_i, b_i) - h_{\d}(\mathbb{E} [\bar x^t]; a_i, b_i)\right\}\cr
&\ge
\frac{\g}{m}\sum_{i=1}^{m}\left\{ h_{\d}(x^*_{\g\d}; a_i, b_i) - h_{\d}(\mathbb{E} [\bar x^t]; a_i, b_i)\right\},
\end{align*}
where the last inequality follows by $F_{\g\d}(\mathbb E [\bar x^t]) - F^*_{\g\d}\ge 0$ since $x^*_{\g\d}$ is a minimizer of $F_{\g\d}$. The function $h_\d(\cdot;a,b)$ has bounded gradient norms by 1 (see~\eqref{eq:sfun}--\eqref{eq:gradh}), implying that for all $i=1,\ldots,m$ and for all $x^*_{\g\d}\in X^*_{\g\d}$,
\[h_{\d}(x^*_{\g\d}; a_i, b_i) - h_{\d}(\mathbb{E} [\bar x^t]; a_i, b_i)\ge -\|x^*_{\g\d}-\mathbb{E} [\bar x^t]\|,\]
By choosing a particular solution $\Pi_{X^*_{\g\d}}[\mathbb{E} [\bar x^t]],$
we have
\begin{align}\label{eq:p1}
\mathbb E [f(\bar x^t)] - f^*
\ge-\g\|\Pi_{X^*_{\g\d}} [ \mathbb{E} [\bar x^t]] -\mathbb{E} [\bar x^t]\|.
\end{align}
 According to Theorem~\ref{th:saga}(b), for Algorithm~\ref{eq:saga_pen} there holds
 \[\lim_{t\to\infty}\mathbb E F_{\g\d}(\bar x^t) = F^*_{\g\d}.\]
 By the convexity of the function $F_{\g\d}$ and the fact that $F^*_{\g\d}$ is its unconstrained minimum value,
 it follows that
 \[\lim_{t\to\infty}F_{\g\d}(\mathbb E \bar x^t) = F^*_{\g\d}.\]
 Thus, any limit point of the sequence $\{\mathbb E \bar x^t\}$ belongs to the set of minimizers $X^*_{\g\d}$
 of the function $F_{\g\d}$.
Hence, for any given $\tilde \epsilon>0$ there exists $T>0$ such that for all $t>T$,
 \begin{align}\label{eq:lim}
  \|\mathbb E [\bar x^t] - \Pi_{X^*_{\g\d}}[\mathbb{E}[ \bar x^t]]\|\le \tilde \epsilon.
 \end{align}
 The result follows from~\eqref{eq:p1} and~\eqref{eq:lim}.
\end{proof}

We emphasize that Algorithm~\ref{eq:saga_pen} presented above is just an example of fast incremental methods which use the Lipschitz gradient property of the objective function and are, thus, applicable to the penalized optimization problem~\eqref{eq:pen-problem}. Other methods with potentially better rate dependence on the problem's parameters include \cite{Katusha, finito, svrg, accSAGA, scda}. All these algorithms guarantee a fast convergence to a feasible point lying within some $\delta^0$-neighborhood of an optimal solution
for a predefined accuracy parameter $\delta^0>0$. \tat{Unfortunately (and naturally) the following issues may arise while applying these incremental methods to the penalized problem~\eqref{eq:pen-problem} in the case of large $m$.
As it has been mentioned above, a large number of constraints leads to a small $\delta$. It implies a small step size and a low convergence rate (see Theorem~\ref{th:saga_pen}).
Moreover, a small smoothing parameter $\d$ makes the penalized problem~\eqref{eq:pen-problem} closer to an ill-conditioned one. In the case of strongly convex $f$, we rectify this issue in Section~\ref{sec:varpar}. In Section~\ref{sec:varpar} we propose an alternative approach to solve the penalized problem. This approach is based on time-dependent penalty parameters and guarantees convergence of the full gradient-based method to the exact solution of the original problem~\eqref{eq:problem}.}

\subsection{Simulation Results}\label{sec:numer}
\tat{Before addressing time-varying penalty parameters, we test the theoretic results presented above. We consider the following regression problem
\begin{align}\label{eq:implement}
 &\min_{x\in\R^n} \|\Phi x- x^0\|^2\cr
 &\mbox{s.t. } Ax\le b,
\end{align}
where $\Phi\in \R^{l\times n}$ is a feature matrix, $x^0\in\R^l$ is a reference point, and $A\in \R^{m\times n}$, $b\in\R^m$ define the set of linear inequality constraints.
Here the coordinates of $x^0\in\R^l$ are chosen at random from the normal distribution with the mean value 0 and variance 10. The entries of $\Phi$ were independently
generated according to a uniform distribution in $[-1,1]$, whereas the elements of $A$ and $b$ are chosen at random from the normal distribution with the mean value 0 and variance 100\footnote{The entries of $A$ and $b$ were adapted until Assumption~\ref{assum:constrSet} is satisfied for the reference point $\bar x = 0$ and $\e=0.01$.}.
For implementation we set $l=n=30$.

Figure~\ref{fig:effG} demonstrates the distance $\|x^t-x^*\|$, where $x^*$ is the solution of \eqref{eq:implement}, estimated for the iterations of the standard full gradient procedure averaged over 100 runs, that was implemented for the penalized problem in the case $m=500$ with the fixed $\d = 0.001$ and different parameters $\g$ starting by $\g=100m^2$. As we can see, the increase of $\g$ implies a faster approach of the solution. However, without adapting the smoothness parameter $\d$ the output after 1000 iterations may stay infeasible. In the current implementation the choice $\g=100m^2$, $\g=2*100m^2$,  $\g=5*100m^2$, $\g=10*100m^2$ resulted in a feasible $x^{100}$ in 100\% of the implementations, whereas the choice $\g=20*100m^2$ led to an infeasible  $x^{100}$ in 33\% of the runs.

The run of the full gradient method (FullGrad) and two other algorithms, namely the SAGA procedure for solving the problem based on the penalized function approach (PA/SAGA) from Algorithm~\ref{eq:saga_pen} and the random projection algorithm (RandProj) from \cite{Nedich2011}, are presented on Figures~\ref{fig:m500}-\ref{fig:m1000_av} for the number of inequality constraints $m=500$ and $m=1000$. Note that Figures~\ref{fig:m500}-\ref{fig:m1000} demonstrate the single run of the procedures, whereas Figure~\ref{fig:m1000_av} shows the result averaged over 100 runs. The step-size parameters for FullGrad and PA/SAGA as well as the penalty parameters in PA/SAGA were tuned. The step-size parameter for RandProj was chosen as $\frac{1}{t}$ according to the theoretic results provided in~\cite{Nedich2011}.  As we can see, during the first $1000$ iterations the SAGA-based algorithm outperforms the random projection procedure by decreasing the relative error ${\|x^t-x^*\|}$ faster. Moreover, the termination state $x^{1000}$ in the case of the averaged runs (Figure~\ref{fig:m1000_av}) occurs to be non-feasible in around $26\%$ of implementations, whereas for the tuned parameters $\delta$ and $\gamma$ all runs of PA/SAGA terminate at a feasible point $x^{1000}$.

}

\begin{figure}[!h]

\begin{minipage}[b]{.5\textwidth}
\centering
\psfrag{0}[c][b]{\tiny$0$}
\psfrag{50}[c][b]{\tiny $50$}
\psfrag{150}[c][b]{\tiny$150$}
\psfrag{250}[c][b]{\tiny$250$}
\psfrag{350}[c][b]{\tiny$350$}
\psfrag{450}[c][b]{\tiny$450$}
\psfrag{100}[c][b]{\tiny$100$}
\psfrag{200}[c][b]{\tiny$200$}
\psfrag{300}[c][b]{\tiny$300$}
\psfrag{400}[c][b]{\tiny$400$}
\psfrag{500}[c][b]{\tiny$500$}
\psfrag{600}[c][b]{\tiny$600$}
\psfrag{700}[c][b]{\tiny$700$}
\psfrag{800}[c][b]{\tiny$800$}
\psfrag{900}[c][b]{\tiny$900$}
\psfrag{1000}[c][b]{\tiny$1000$}
\psfrag{5}[c][l]{\tiny$5$}
\psfrag{15}[c][l]{\tiny $15$}
\psfrag{25}[c][l]{\tiny$25$}
\psfrag{20}[c][l]{\tiny$20$}
\psfrag{40}[c][l]{\tiny $40$}
\psfrag{45}[c][l]{\tiny $45$}
\psfrag{35}[c][l]{\tiny$35$}
\psfrag{30}[c][l]{\tiny$30$}
\psfrag{time}[c][b]{\tiny time}
\psfrag{Relative Error}[b][r]{\tiny \vspace{-5cm}}
\begin{overpic}[width=1\textwidth]{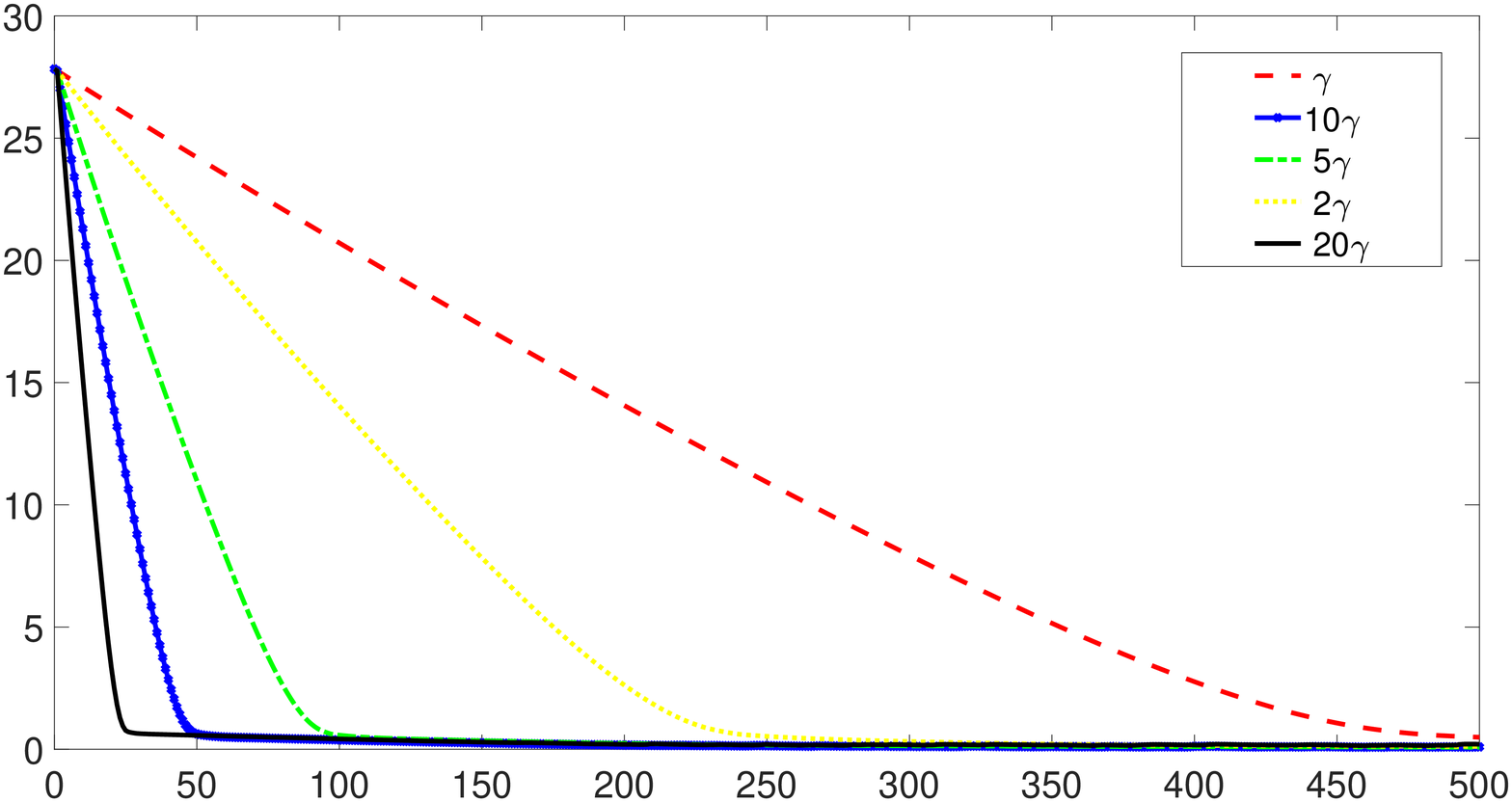}
\put(6,18){\rotatebox{90}{\tiny{Relative Error}}}
\end{overpic}
\caption{Effect of $\g$.}
\label{fig:effG}
\end{minipage}
\hfill
\begin{minipage}[b]{.5\textwidth}
\centering
\psfrag{0}[c][b]{\tiny$0$}
\psfrag{100}[c][b]{\tiny$100$}
\psfrag{200}[c][b]{\tiny$200$}
\psfrag{300}[c][b]{\tiny$300$}
\psfrag{400}[c][b]{\tiny$400$}
\psfrag{500}[c][b]{\tiny$500$}
\psfrag{600}[c][b]{\tiny$600$}
\psfrag{700}[c][b]{\tiny$700$}
\psfrag{800}[c][b]{\tiny$800$}
\psfrag{900}[c][b]{\tiny$900$}
\psfrag{1000}[c][b]{\tiny$1000$}
\psfrag{5}[c][l]{\tiny$5$}
\psfrag{10}[c][l]{\tiny $10$}
\psfrag{15}[c][l]{\tiny $15$}
\psfrag{25}[c][l]{\tiny$25$}
\psfrag{20}[c][l]{\tiny$20$}
\psfrag{30}[c][l]{\tiny$30$}
\psfrag{time}[c][b]{\tiny time}
\psfrag{Relative Error}[b][r]{\tiny \vspace{-5cm}}
\begin{overpic}[width=1\textwidth]{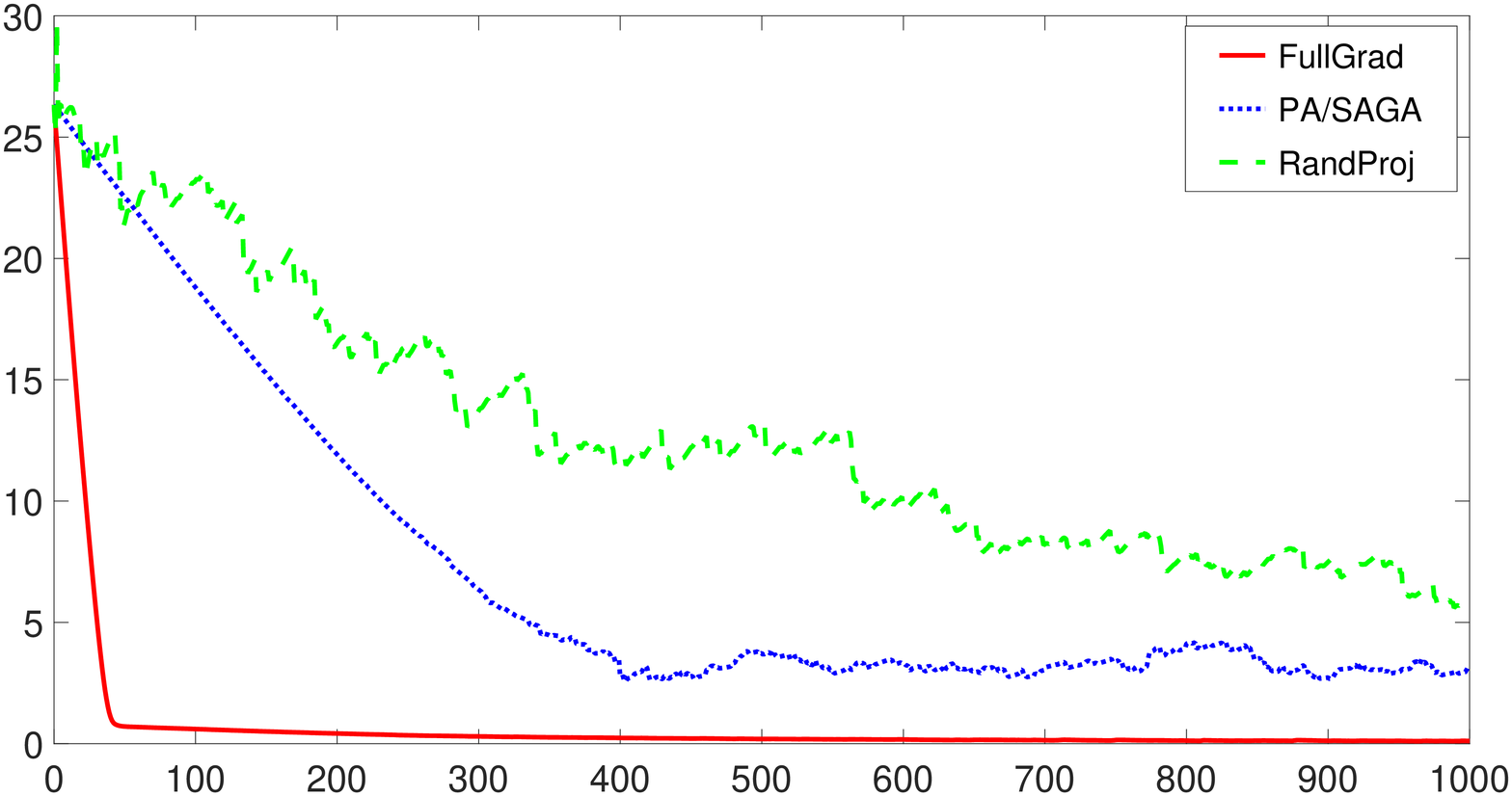}
\put(6,18){\rotatebox{90}{\tiny{Relative Error}}}
\end{overpic}
\caption{PA/SAGA vs RandProj, $m=500$.}
\label{fig:m500}
\end{minipage}


\vspace{2ex}


\noindent\begin{minipage}[b]{.5\textwidth}
\centering
\psfrag{0}[c][b]{\tiny$0$}
\psfrag{100}[c][b]{\tiny$100$}
\psfrag{200}[c][b]{\tiny$200$}
\psfrag{300}[c][b]{\tiny$300$}
\psfrag{400}[c][b]{\tiny$400$}
\psfrag{500}[c][b]{\tiny$500$}
\psfrag{600}[c][b]{\tiny$600$}
\psfrag{700}[c][b]{\tiny$700$}
\psfrag{800}[c][b]{\tiny$800$}
\psfrag{900}[c][b]{\tiny$900$}
\psfrag{1000}[c][b]{\tiny$1000$}
\psfrag{5}[c][l]{\tiny$5$}
\psfrag{10}[c][l]{\tiny $10$}
\psfrag{15}[c][l]{\tiny $15$}
\psfrag{25}[c][l]{\tiny$25$}
\psfrag{20}[c][l]{\tiny$20$}
\psfrag{40}[c][l]{\tiny $40$}
\psfrag{45}[c][l]{\tiny $45$}
\psfrag{35}[c][l]{\tiny$35$}
\psfrag{30}[c][l]{\tiny$30$}
\psfrag{time}[c][b]{\tiny time}
\psfrag{Relative Error}[b][r]{\tiny \vspace{-5cm}}
\begin{overpic}[width=1\textwidth]{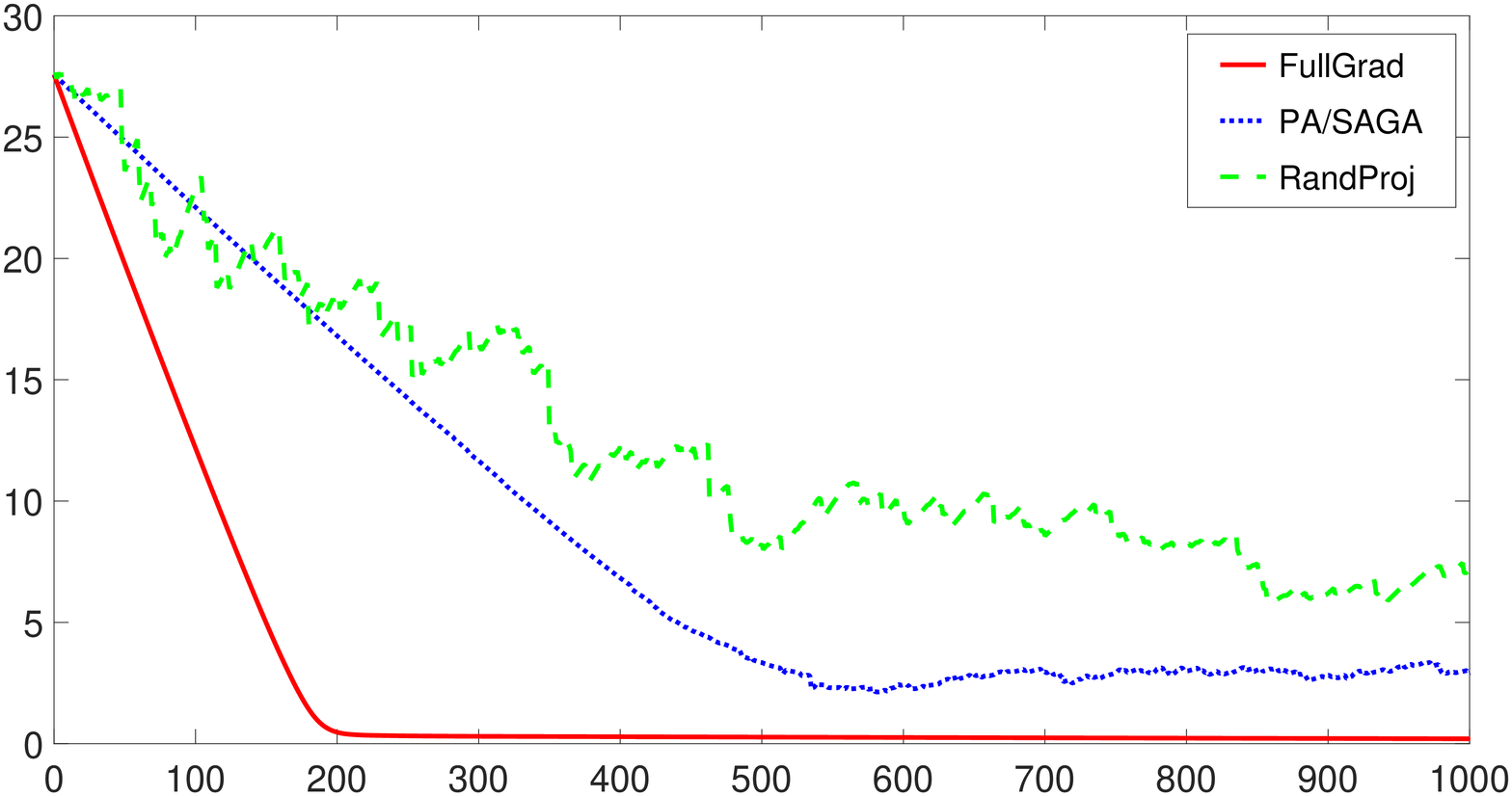}
\put(6,18){\rotatebox{90}{\tiny{Relative Error}}}
\end{overpic}
\caption{PA/SAGA vs RandProj, $m=1000$.}
\label{fig:m1000}
\end{minipage}
\hfill
\begin{minipage}[b]{.5\textwidth}
\centering
\psfrag{0}[c][b]{\tiny$0$}
\psfrag{100}[c][b]{\tiny$100$}
\psfrag{200}[c][b]{\tiny$200$}
\psfrag{300}[c][b]{\tiny$300$}
\psfrag{400}[c][b]{\tiny$400$}
\psfrag{500}[c][b]{\tiny$500$}
\psfrag{600}[c][b]{\tiny$600$}
\psfrag{700}[c][b]{\tiny$700$}
\psfrag{800}[c][b]{\tiny$800$}
\psfrag{900}[c][b]{\tiny$900$}
\psfrag{1000}[c][b]{\tiny$1000$}
\psfrag{5}[c][l]{\tiny$5$}
\psfrag{10}[c][l]{\tiny $10$}
\psfrag{15}[c][l]{\tiny $15$}
\psfrag{25}[c][l]{\tiny$25$}
\psfrag{20}[c][l]{\tiny$20$}
\psfrag{60}[c][l]{\tiny $60$}
\psfrag{50}[c][l]{\tiny $50$}
\psfrag{40}[c][l]{\tiny$40$}
\psfrag{30}[c][l]{\tiny$30$}
\psfrag{time}[c][b]{\tiny time}
\psfrag{Relative Error}[b][r]{\tiny \vspace{-5cm}}
\begin{overpic}[width=1\textwidth]{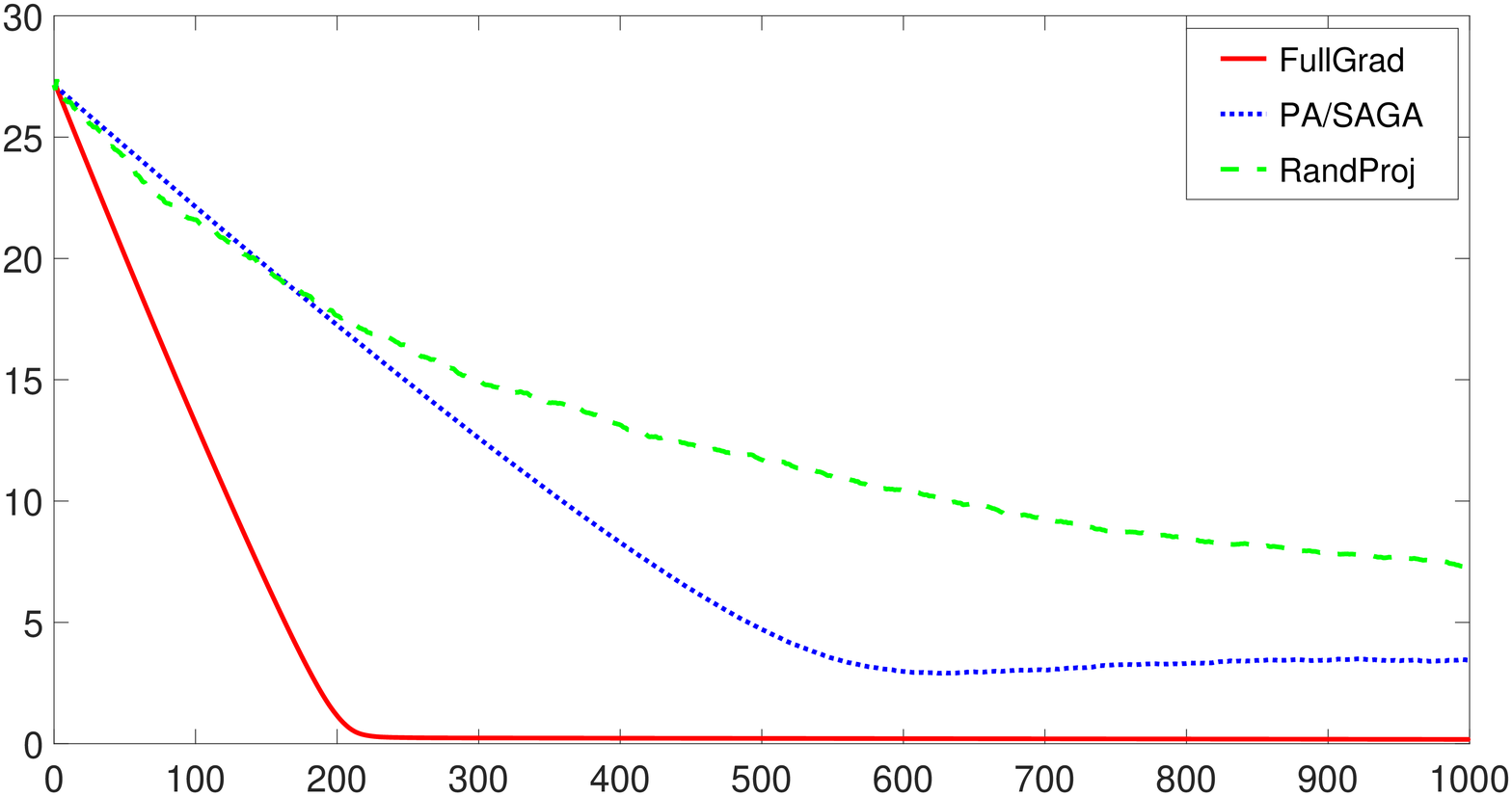}
\put(6,18){\rotatebox{90}{\tiny{Relative Error}}}
\end{overpic}
\caption{av PA/SAGA vs RandProj, $m=1000$.}
\label{fig:m1000_av}
\end{minipage}

\end{figure}

\tat{
\section{Varying Penalty Parameters}\label{sec:varpar}
The results in the preceding section impose
particular conditions on the penalty parameters $\delta$ and $\gamma$
which involve several constants, including
$\e$ from Assumption~\ref{assum:constrSet} and
Hoffman's constant $\b$. These constants may be difficult to obtain for a given problem,
and one may need to resort to an alternative approach, where
the penalty parameters $\delta$ and $\gamma$ are varying. In particular, one may consider
decreasing the values of $\d$ to 0, which we investigate in this section, under the assumption that
the function $f$ is strongly convex and has Lipschitz continuous gradients.
}
\subsection{Behavior of sequence of solutions of penalized problems}
\tat{We consider sequences $\{\delta_k\}$ and $\{\g_k\}$ of positive scalars,
and we denote the corresponding penalized
function $F_{\d_k\g_k}(x)$ simply by $F_k(x)$,
i.e.,
\begin{equation}
F_k(x) = f(x) + \frac{\g_k}{m} \sum_{i=1}^{m} h_k\left(x; a_i, b_i\right),
\end{equation}
where we use $h_k$ to denote the function $h_{\d_k}$.
The corresponding unconstrained optimization problem is
\begin{equation}\label{eq:penprob_k}
\min_{x\in \R^n} F_k(x).
\end{equation}
When $f$ is strongly convex, each of these penalized problems has a unique solution,
denoted by $x_k^*$, and the original problem also has a unique solution $x^*\in X$.
Our next result provides relations for the points $x_k^*$ and
the optimal solution $x^*$ of the original problem.

\begin{lemma}\label{lema-solsbded}
Let $f$ be strongly convex with a constant $\mu>0$.
Assume that for all $k$ the sequences $\{\d_k\}$ and $\{\g_k\}$ are such that $\g_k>0$, $\d_k>0$
and
$\g_k\d_k\le c$ for some positive $c$.
Then, the sequence $\{x_k^*\}$ of solutions to~\eqref{eq:penprob_k}
is contained in the level set
\[\mathcal{L}=\left\{x\in\R^n\mid f(x)\le f(x^*)+\frac{c}{4\a_{\min}}\right\},\]
where $x^*$ is the solution of the original problem and
$\a_{\min}=\min_{1\le i\le m}\|a_i\|$.
In particular, the sequence $\{x_k^*\}$ is bounded.
\end{lemma}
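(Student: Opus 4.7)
The plan is to apply Corollary~\ref{cor:lset} directly, choosing the feasible point $\hat x$ in that corollary to be the solution $x^*$ of the original problem. The strong convexity of $f$ guarantees that $x^*$ exists uniquely and is feasible, so it is an admissible choice for $\hat x$. For each $k$, Corollary~\ref{cor:lset} applied with $\g=\g_k$, $\d=\d_k$, and $\hat x = x^*$ yields
\[
X^*_{\g_k\d_k}\subseteq \{x\in\R^n\mid f(x)\le t_{\g_k\d_k}(x^*)\},
\qquad t_{\g_k\d_k}(x^*)=f(x^*)+\frac{\g_k\d_k}{4\a_{\min}}.
\]
Since $\g_k\d_k\le c$, we have $t_{\g_k\d_k}(x^*)\le f(x^*)+\frac{c}{4\a_{\min}}$, and since $f$ is strongly convex the penalized function $F_k$ inherits strong convexity and hence has a unique minimizer $x_k^*$, so the single-point inclusion $x_k^*\in\mathcal{L}$ is immediate.

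For boundedness, I would observe that strong convexity of $f$ with constant $\mu>0$ implies $f(x)\ge f(x^*)+\tfrac{\mu}{2}\|x-x^*\|^2$ for all $x\in\R^n$ (using $\nabla f(x^*)^\top(x-x^*)\ge 0$ for the unconstrained minimizer, or the corresponding first-order optimality condition restricted to the feasible cone at $x^*$; alternatively one can just invoke that strongly convex functions have bounded level sets). Combined with $x_k^*\in\mathcal{L}$, this gives the uniform bound
\[
\|x_k^*-x^*\|^2\le \frac{c}{2\mu\a_{\min}},
\]
so $\{x_k^*\}$ is bounded.

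There is essentially no obstacle here: the substance is contained in Corollary~\ref{cor:lset}, whose hypothesis (feasibility of $\hat x$) is met by the existing solution $x^*$, and the constraint $\g_k\d_k\le c$ is exactly what is needed to make the upper bound on $t_{\g_k\d_k}(\hat x)$ independent of $k$. The only subtle point is justifying that $f$ has bounded level sets without assuming it as a separate hypothesis; this follows from strong convexity as noted above, so the lemma's conclusion of boundedness follows at no extra cost.
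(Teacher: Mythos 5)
Your proof is correct and follows essentially the same route as the paper: apply Corollary~\ref{cor:lset} with $\hat x=x^*$, bound $t_{\g_k\d_k}(x^*)$ using $\g_k\d_k\le c$, and conclude boundedness from the fact that strongly convex functions have bounded level sets. The only caution is that your explicit bound $\|x_k^*-x^*\|^2\le \frac{c}{2\mu\a_{\min}}$ relies on $\la\nabla f(x^*),x-x^*\ra\ge0$, which need not hold at infeasible points $x_k^*$ since $x^*$ is only the \emph{constrained} minimizer; your fallback appeal to bounded level sets of a strongly convex function is the clean (and the paper's) way to finish.
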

\begin{proof}
By Corollary~\ref{cor:lset}, where we set $\hat x=x^*$,
$\d=\d_k$, $\g=\g_k$,
and
$t_{\g\d}(\hat x)$ is replaced with
$f(x^*)+\frac{\g_k\d_k}{4\a_{\min}}$, we obtain
that
$f(x_k^*)\le f(x^*)+\frac{\g_k\d_k}{4\a_{\min}}$. By using $\g_k\d_k\le c$,
we conclude that $\{x_k\}$ is contained in
the level set $\mathcal{L}=\left\{x\in\R^n\mid f(x)\le f(x^*)+\frac{c}{4\a_{\min}}\right\}$.
Since $f$ is strongly convex, its level sets are bounded, thus implying that
$\{x_k^*\}$ is bounded.
\end{proof}

We next consider a set of conditions on parameters $\d_k$ and $\g_k$ that will ensure
that the sequence $\{x_k^*\}$ converges to $x^*$ as $k\to\infty$.
In what follows, we will use the projections of
the points $x^*_k$ on the feasible set, which
we denote by $p_k$, i.e., $p_k=\Pi_X[x_k^*]$.
Under the assumptions of Lemma~\ref{lema-solsbded},
the sequence$\{x_k^*\}$ is bounded, and so is the sequence
$\{p_k\}$ of the projections of $x_k^*$'s on $X$.
Let $R$ be large enough so that $\{x_k\}\subset \mathbb{B}(0,R)$ and
$\{p_k\}\subset \mathbb{B}(0,R)$, where $\mathbb{B}(0,R)$ denotes the ball centered at the
origin with the radius $R$.
The subgradients of $f(x)$ for $x\in \mathbb{B}(0,R)$ are bounded, and let
$L$ be the maximum norm
of the subgradients of $f(x)$ over $x\in \mathbb{B}(0,R)$,
i.e.,
\begin{equation}\label{eq:sgdbound}
L=\max\{\|s\|\mid s\in\partial f(x), \|x\|\le R\}.
\end{equation}

We have the following lemma.
\begin{lemma}\label{lema-sols}
Let $f$ be strongly convex with a constant $\mu>0$.
Assume that for all $k$ the sequences $\{\d_k\}$ and $\{\g_k\}$ are such that $\g_k>0$, $\d_k>0$
and
$\g_k\d_k\le c$ for some positive $c$. Let $L$ be given by~\eqref{eq:sgdbound}. Then,
for all $k$, we have
\[\frac{\mu}{2}\|x^* -x_k^*\|^2 + \frac{\mu}{2}\|x^*-p_k\|^2
+\left(\frac{\g_k}{4m\b} - L\right) \dist(x_k^*,X)
\le \frac{\g_k\d_k}{4\a_{\min}},\]
where $p_k=\Pi_X[x_k^*]$ for all $k$, and
$\a_{\min}=\min_{1\le i\le m}\|a_i\|$.
\end{lemma}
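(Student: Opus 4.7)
The plan is to combine the strong convexity of $F_k$ at its minimizer $x_k^*$ with a careful splitting of the term $f(x^*)-f(x_k^*)$ through the projection $p_k=\Pi_X[x_k^*]$. First I would exploit that $x_k^*$ minimizes the $\mu$--strongly convex function $F_k$ (it inherits strong convexity from $f$ since the penalties are convex) to obtain
\[
F_k(x^*) - F_k(x_k^*) \ge \frac{\mu}{2}\|x^*-x_k^*\|^2.
\]
Then I would bound the left-hand side of this inequality from above. The upper bound on $F_k(x^*)$ uses feasibility of $x^*$: by~\eqref{eq:hfunineq1}, $h_k(x^*;a_i,b_i)\le \delta_k/(4\|a_i\|)$, so $F_k(x^*)\le f(x^*)+\gamma_k\delta_k/(4\alpha_{\min})$. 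The lower bound on $F_k(x_k^*)$ uses Lemma~\ref{lem:penalty} (giving $h_k(x_k^*;a_i,b_i)\ge \dist(x_k^*,X_i)$) followed by Hoffman's Lemma~\ref{lem:hof}, yielding
\[
F_k(x_k^*)\ \ge\ f(x_k^*)+\frac{\gamma_k}{m\beta}\,\dist(x_k^*,X).
\]

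The core remaining task is to control $f(x^*)-f(x_k^*)$. The key idea is to split this through $p_k=\Pi_X[x_k^*]$:
\[
f(x^*)-f(x_k^*)\ =\ \bigl(f(x^*)-f(p_k)\bigr)+\bigl(f(p_k)-f(x_k^*)\bigr).
\]
Since $p_k\in X$ and $x^*$ minimizes $f$ on $X$, the first-order optimality condition gives some $s^*\in\partial f(x^*)$ with $\langle s^*, p_k-x^*\rangle\ge 0$; strong convexity of $f$ then yields $f(x^*)-f(p_k)\le -\tfrac{\mu}{2}\|x^*-p_k\|^2$. The second summand is controlled by the uniform subgradient bound~\eqref{eq:sgdbound}: since both $p_k$ and $x_k^*$ lie in $\mathbb{B}(0,R)$ by the choice of $R$, the usual mean-value/subgradient inequality gives $f(p_k)-f(x_k^*)\le L\|p_k-x_k^*\|=L\dist(x_k^*,X)$.

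Substituting these bounds into the strong convexity inequality and rearranging produces
\[
\frac{\mu}{2}\|x^*-x_k^*\|^2 + \frac{\mu}{2}\|x^*-p_k\|^2 + \left(\frac{\gamma_k}{m\beta}-L\right)\dist(x_k^*,X)\ \le\ \frac{\gamma_k\delta_k}{4\alpha_{\min}},
\]
which implies the stated inequality (the coefficient $\gamma_k/(m\beta)$ may be weakened to $\gamma_k/(4m\beta)$). The main technical care is in justifying the bound $f(x^*)-f(p_k)\le -\tfrac{\mu}{2}\|x^*-p_k\|^2$ in the non-smooth setting via the subdifferential optimality condition $0\in\partial f(x^*)+N_X(x^*)$, and in verifying that the subgradient bound $L$ applies to both $p_k$ and $x_k^*$, which is exactly what the preliminary Lemma~\ref{lema-solsbded} and the choice of $R$ preceding the lemma ensure.
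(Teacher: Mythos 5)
Your proposal is correct and follows essentially the same route as the paper: strong convexity of $F_k$ at its minimizer, the split of $f(x^*)-f(x_k^*)$ through $p_k=\Pi_X[x_k^*]$ using the optimality of $x^*$ over $X$ and the subgradient bound $L$, the feasibility bound \eqref{eq:hfunineq1} for $h_k(x^*;\cdot)$, and Lemma~\ref{lem:penalty} plus Hoffman's Lemma~\ref{lem:hof} to lower-bound the penalty at $x_k^*$ by $\dist(x_k^*,X)$. The only (harmless) discrepancy is that you obtain the coefficient $\g_k/(m\b)$ where the paper inserts an extra factor $1/4$ in the bound $h_k(x_k^*;a_i,b_i)\ge\tfrac14\dist(x_k^*,X_i)$; your stronger version implies the stated inequality since $\dist(x_k^*,X)\ge0$.
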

\begin{proof}
Since each $F_k$ is strongly convex with the constant $\mu>0$,
by the optimality of $x_k^*$ and by the definition of $F_k$, we have
\begin{equation}\label{eq:11}
\frac{\mu}{2}\|x^* -x_k^*\|^2\le 
f(x^*)-f(x_k^*) + \frac{\g_k}{m} \sum_{i=1}^{m} h_k\left(x^*; a_i, b_i\right)
-\frac{\g_k}{m} \sum_{i=1}^{m} h_k\left(x_k^*; a_i, b_i\right).
\end{equation}
By adding and subtracting $f(p_k)$,
we have
\[f(x^*)-f(x_k^*)=f(x^*)-f(p_k) + f(p_k) - f(x_k^*)\le -\frac{\mu}{2}\|x^*-p_k\|^2
+|f(p_k)-f(x_k^*)|,\]
where in the last inequality is obtained using
\[\frac{\mu}{2}\|p_k-x^*\|^2 + \la \nabla f(x^*), p_k-x^*\ra +f(x^*)\le f(p_k),\]
and the fact $\la \nabla f(x^*), p_k-x^*\ra\ge0$, which holds since $p_k$ is feasible
and $x^*$ is the optimal point.
Since the subgradients of $f$ are uniformly bounded at $x^*_k$ and $p_k$,
we have
\[f(x^*)-f(x_k^*)\le -\frac{\mu}{2}\|x^*-p_k\|^2 + L \|p_k - x_k^*\|.\]
Combining the preceding inequality with relation~\eqref{eq:11}, we obtain
\[\frac{\mu}{2}\|x^* -x_k^*\|^2\le -\frac{\mu}{2}\|x^*-p_k\|^2 + L \|p_k - x_k^*\|
+ \frac{\g_k}{m} \sum_{i=1}^{m} h_k\left(x^*; a_i, b_i\right)
-\frac{\g_k}{m} \sum_{i=1}^{m} h_k\left(x_k^*; a_i, b_i\right).\]
Since $x^*$ is feasible, by relation~\eqref{eq:hfunineq1} (where $\d=\d_k$)
we have for all $i=1,\ldots,m$,
\[h_k(x^*; a_i,b_i) \le \frac{\d_k}{4\|a_i\|}\le \frac{\d_k}{4\a_{\min}}.\]
Hence, it follows that
\[\frac{\mu}{2}\|x^* -x_k^*\|^2 + \frac{\mu}{2}\|x^*-p_k\|^2 \le L \|p_k - x_k^*\|
+ \frac{\g_k\d_k}{4\a_{\min}}
-\frac{\g_k}{m} \sum_{i=1}^{m} h_k\left(x_k^*; a_i, b_i\right).\]

By Lemma~\ref{lem:penalty}, where $\d=\d_k$, $a=a_i$, $b=b_i$, and $Y=X_i$,
we have
\[h_k(x_k^*;a_i,b_i)\ge \frac{1}{4}\dist(x_k^*,X_i)
\qquad\hbox{for all $i=1,\ldots,m$}.\]
Therefore,
\[\frac{\mu}{2}\|x^* -x_k^*\|^2 + \frac{\mu}{2}\|x^*-p_k\|^2 \le L \|p_k - x_k^*\|
+  \frac{\g_k\d_k}{4\a_{\min}}
-\frac{\g_k}{4m} \sum_{i=1}^{m} \dist(x_k^*,X_i).\]
By Lemma~\ref{lem:hof}, we have
$\beta \sum_{i=1}^{m} \dist(x, X_i) \ge \dist(x,X)$ for all  $x\in\R^n$,
implying that
\[\frac{\mu}{2}\|x^* -x_k^*\|^2 + \frac{\mu}{2}\|x^*-p_k\|^2 \le L \|p_k - x_k^*\|
+ \frac{\g_k\d_k}{4\a_{\min}}
-\frac{\g_k}{4m\b} \dist(x_k^*,X).\]
Since $p_k$ is the projection of $x_k^*$ on $X$, we have
$\|p_k-x_k^*\|=\dist(x_k^*,X)$ implying that
\[\frac{\mu}{2}\|x^* -x_k^*\|^2 + \frac{\mu}{2}\|x^*-p_k\|^2
\le \left(L -\frac{\g_k}{4m\b}\right) \dist(x_k^*,X)+ \frac{\g_k\d_k}{4\a_{\min}}.\]
\end{proof}

Lemma~\ref{lema-sols} indicates that, when $\g_k\to+\infty$,
for all large enough $k$, we will have $\frac{\g_k}{4m\b}>0$,
implying that
\[\dist(x_k^*,X)\le \frac{\g_k\d_k}{4\a_{\min} \left(\frac{\g_k}{4m\b}-L\right)}
=\frac{m\b\g_k\d_k}{\a_{\min}(\g_k-4m\b L)}
\approx O(\d_k).\]
Thus, if $\d_k\to0$,  the distance of $x_k^*$ to the feasible set $X$ will go to 0
at the rate of  $O(\d_k)$.
Lemma~\ref{lema-sols} also indicates that, for large enough $k,$
\[\|x^* -x_k^*\|^2\le  \frac{\g_k\d_k}{2\mu\a_{\min}}.\]
Thus, if $\g_k\d_k\to0$, then the points $x_k^*$ approach
the optimal solution $x^*$ of the original problem, with the rate of
$O(\g_k\d_k)$.

To summarize, Lemma~\ref{lema-sols} characterizes the behavior of the sequence $\{x_k^*\}$
in terms of the penalty parameters $\{\g_k\}$ and $\{\d_k\}$. It shows that
under conditions $\g_k\to\infty$, $\d_k\to0$ and $\g_k\d_k\to0$, we have
$\|x_k^*-x^*\|\to0$. Based on Lemma~\ref{lema-sols}, one can construct two-loop approach to compute
the optimal point $x^*$ of the original problem,
where for every outer loop $k$, we have an inner loop of iterations to compute $x_k^*$. This, however,
will be quite inefficient. In the next section, we propose a more efficient single-loop algorithm,
where at each iteration $k$ we use the gradient of the function $F_k$.
}

\subsection{A gradient algorithm for solving the original problem}
\tat{The result of Lemma~\ref{lema-sols} is useful
for analyzing the convergence behavior of an algorithm that
at iteration $k$, when the iterate $x_k$ is available,
uses the gradient  $\nabla F_k(x_k)$ to construct $x_{k+1}$,
as opposed to determining $x^*_k$ for each function $F_k$.
We illustrate this on a simple gradient-based method, given by
\begin{equation}\label{eq:gradmet}
x_{k+1} = x_k-s_k \nabla F_k(x_k)\qquad\hbox{for $k\ge 1$},
\end{equation}
where $x_1$ is an initial point and $s_k>0$ is a stepsize.
The idea behind the analysis of the method is resting on a relation of the form
$\|x_{k+1} - x^*\|\le q_k \|x_k - x^*\|+ r_k$
for some $q_k$ and $r_k$, and explore the conditions on $q_k$ and
$r_k$ that ensure the convergence
of $\|x_k-x^*\|$ to 0, as $k\to\infty$.

In what follows, we make use of the following result which can be found in~\cite{polyakbook},
Lemma 3 in Chapter 2.
\begin{lemma}\label{lema-polyak}
Let $\{u_k\}$ be a nonnegative scalar sequence such that
\[u_{k+1}\le q_k u_k+r_k\qquad\hbox{ for all $k$,}\]
where the scalars $q_k$ and $r_k$
satisfy the following conditions
\[\hbox{$q_k\in[0,1)$ and $r_k\ge 0$ for all $k$,}
\qquad \sum_{k=1}^\infty (1-q_k)=\infty,\qquad
\lim_{k\to\infty}\frac{r_k}{1-q_k}=0.\]
Then $u_k\to 0$ as $k\to\infty$.
\end{lemma}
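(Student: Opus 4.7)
The plan is to unroll the recurrence $u_{k+1}\le q_k u_k + r_k$ by iterating it backward from $k$ down to $1$, which (by a straightforward induction) yields the explicit bound
\[
u_{k+1} \;\le\; u_1 \prod_{j=1}^{k} q_j \;+\; \sum_{i=1}^{k} r_i \prod_{j=i+1}^{k} q_j,
\]
with the convention that the empty product equals $1$. The task then reduces to showing that both terms on the right tend to $0$ as $k\to\infty$.

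The first term is handled by the standard inequality $q_j\le e^{-(1-q_j)}$, which gives $\prod_{j=1}^k q_j\le \exp\!\bigl(-\sum_{j=1}^k(1-q_j)\bigr)$. Combined with the divergence assumption $\sum_{k=1}^\infty(1-q_k)=\infty$, this yields $\prod_{j=1}^k q_j\to 0$, so the first term vanishes. The same inequality also implies that for every fixed index $N$, $\prod_{j=N+1}^k q_j\to 0$ as $k\to\infty$, a fact needed below.

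The main obstacle is controlling the accumulated error term $S_k := \sum_{i=1}^{k} r_i \prod_{j=i+1}^{k} q_j$, because although each $r_i$ is eventually small relative to $1-q_i$, the number of summands grows. The trick is a telescoping identity together with a split at a suitable index. Given $\varepsilon>0$, use the hypothesis $r_i/(1-q_i)\to 0$ to pick $N$ so large that $r_i\le \tfrac{\varepsilon}{2}(1-q_i)$ for every $i\ge N$. Split
\[
S_k \;=\; \sum_{i=1}^{N-1} r_i \prod_{j=i+1}^{k} q_j \;+\; \sum_{i=N}^{k} r_i \prod_{j=i+1}^{k} q_j.
\]
For the tail, observe the telescoping identity
\[
(1-q_i)\prod_{j=i+1}^{k} q_j \;=\; \prod_{j=i+1}^{k} q_j \;-\; \prod_{j=i}^{k} q_j,
\]
so that
\[
\sum_{i=N}^{k} (1-q_i)\prod_{j=i+1}^{k} q_j \;=\; 1 - \prod_{j=N}^{k} q_j \;\le\; 1,
\]
and therefore the tail sum is bounded by $\varepsilon/2$.

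For the head, $\sum_{i=1}^{N-1} r_i \prod_{j=i+1}^{k} q_j$ is a fixed finite linear combination of the quantities $\prod_{j=i+1}^{k} q_j$, each of which tends to $0$ as $k\to\infty$ by the observation in the second paragraph. Hence for all $k$ large enough the head is also less than $\varepsilon/2$. Combining the two estimates with the vanishing of $u_1\prod_{j=1}^k q_j$ shows that $u_{k+1}\le \varepsilon$ for all sufficiently large $k$, and since $\varepsilon>0$ was arbitrary, we conclude $u_k\to 0$.
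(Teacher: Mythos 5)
Your proof is correct and complete: the unrolled bound, the estimate $\prod_{j}q_j\le\exp\bigl(-\sum_j(1-q_j)\bigr)$, the split at $N$, and the telescoping bound $\sum_{i=N}^{k}(1-q_i)\prod_{j=i+1}^{k}q_j=1-\prod_{j=N}^{k}q_j\le 1$ are all valid steps. Note that the paper does not actually prove this lemma; it simply cites Lemma~3 in Chapter~2 of Polyak's book, and your argument is essentially the classical proof given there, so there is nothing to reconcile.
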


With Lemma~\ref{lema-sols} and Lemma~\ref{lema-polyak} in place,
we next establish a set of conditions on $\{\g_k\}$ and $\{\d_k\}$
that ensure convergence of the iterates produced by the method~\eqref{eq:gradmet}.

\begin{proposition}\label{prop:gradconv}
Let $f$ be strongly convex with a constant $\mu>0$ and have Lipschitz continuous gradients
with a constant $L_f$.
Let the sequences $\{\g_k\}$ and $\{\d_k\}$ satisfy
\[\hbox{$\g_k>0$,\ $\d_k>0$, \ $\g_{k+1}\d_{k+1}\le \g_k\d_k$ \ \  for all $k$},
\qquad \lim_{k\to\infty}\g_k=+\infty.\]
Consider the method~\eqref{eq:gradmet} with the stepsize $s_k>0$ satisfying the conditions
\[\sum_{k=1}^\infty s_k=\infty,\qquad\sum_{k=1}^\infty s_k^2<\infty.\]
Moreover, assume that
\[\lim_{k\to\infty}s_k\g_k^2=0,\qquad
\lim_{k\to\infty}\frac{\g_k\d_k}{s_k^2}=0.\]
Then, the iterates $\{x_k\}$ the method~\eqref{eq:gradmet} converge to the solution $x^*$
of the original problem.
\end{proposition}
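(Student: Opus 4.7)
The plan is to derive a Polyak-style recursion $u_{k+1}\le q_k u_k+r_k$ for $u_k=\|x_k-x^*\|^2$, with $q_k\in[0,1)$ eventually, $\sum_k(1-q_k)=\infty$, and $r_k/(1-q_k)\to0$, so that Lemma~\ref{lema-polyak} closes the argument. Expanding the update~\eqref{eq:gradmet} gives
\[
\|x_{k+1}-x^*\|^2=\|x_k-x^*\|^2-2s_k\langle\nabla F_k(x_k),x_k-x^*\rangle+s_k^2\|\nabla F_k(x_k)\|^2.
\]
Since $f$ is $\mu$-strongly convex and every $h_{\delta_k}(\cdot;a_i,b_i)$ is convex, $F_k$ is $\mu$-strongly convex, hence
\[
\langle\nabla F_k(x_k),x_k-x^*\rangle\ge F_k(x_k)-F_k(x^*)+\tfrac{\mu}{2}\|x_k-x^*\|^2.
\]

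Two estimates then drive the analysis. First, since $F_k(x_k)\ge F_k(x_k^*)$, I set $\epsilon_k:=F_k(x^*)-F_k(x_k^*)\ge0$ and bound it by combining the feasibility estimate $h_{\delta_k}(x^*;a_i,b_i)\le\delta_k/(4\|a_i\|)$ from~\eqref{eq:hfunineq1} with Lemma~\ref{lema-sols}, which for $\gamma_k>4m\beta L$ yields $\|x_k^*-x^*\|=O(\sqrt{\gamma_k\delta_k})$; this gives $\epsilon_k=O(\sqrt{\gamma_k\delta_k})\to0$ as $\gamma_k\delta_k\to 0$. Second, for the gradient norm I exploit the crucial observation $|p'_{\delta_k}(\cdot)|\le1$ from~\eqref{eq:pderiv}, which through~\eqref{eq:gradh} yields $\|\nabla h_{\delta_k}(x;a_i,b_i)\|\le1$ \emph{uniformly} in $x$ and $\delta_k$; using the Lipschitz-gradient bound $\|a\|/(2\delta_k)$ instead would blow up as $\delta_k\downarrow0$ and must be avoided. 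Consequently
\[
\|\nabla F_k(x_k)\|\le\|\nabla f(x_k)\|+\gamma_k\le L_f\|x_k-x^*\|+\|\nabla f(x^*)\|+\gamma_k.
\]

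Squaring this bound and substituting produces a recursion of the form
\[
\|x_{k+1}-x^*\|^2\le\bigl(1-s_k\mu+C_1 s_k^2 L_f^2\bigr)\|x_k-x^*\|^2+2s_k\epsilon_k+C_2 s_k^2\bigl(\|\nabla f(x^*)\|^2+\gamma_k^2\bigr).
\]
Since $\sum s_k^2<\infty$ forces $s_k\to0$, for $k$ large one has $C_1 s_k^2 L_f^2\le s_k\mu/2$, so $q_k:=1-s_k\mu/2\in[0,1)$ and $\sum(1-q_k)=\infty$ follows from $\sum s_k=\infty$. Dividing the residual by $s_k\mu/2$ gives a quantity of order $\epsilon_k+s_k+s_k\gamma_k^2$, which tends to $0$ by $\gamma_k\delta_k\to0$ (a consequence of $\gamma_k\delta_k/s_k^2\to0$ together with $s_k\to0$) and $s_k\gamma_k^2\to0$. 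Lemma~\ref{lema-polyak} then delivers $\|x_k-x^*\|^2\to0$. The main technical obstacle is securing the hypothesis of Lemma~\ref{lema-sols} with a \emph{uniform} subgradient-bound constant $L$ along the whole sequence $\{x_k^*\}$; this is ensured because the monotonicity $\gamma_{k+1}\delta_{k+1}\le\gamma_k\delta_k$ bounds $\gamma_k\delta_k$ by $c:=\gamma_1\delta_1$ for every $k$, whence Lemma~\ref{lema-solsbded} confines $\{x_k^*\}$ and $\{\Pi_X[x_k^*]\}$ to a single bounded level set of $f$, supplying one $L$ valid at every iteration.
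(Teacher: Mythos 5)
Your argument is correct, and it takes a genuinely different route from the paper's. The paper runs the Polyak recursion on the distance to the \emph{moving} target, $u_k=\|x_k-x_k^*\|^2$: it expands $\|x_{k+1}-x_k^*\|^2$, then pays an extra price to switch targets from $x_k^*$ to $x_{k+1}^*$ via a Young-type inequality $\|x_{k+1}-x_{k+1}^*\|^2\le(1+s_k\mu)\|x_{k+1}-x_k^*\|^2+(1+s_k^{-1}\mu^{-1})\|x_k^*-x_{k+1}^*\|^2$, controlling the drift $\|x_k^*-x_{k+1}^*\|$ through Lemma~\ref{lema-sols} and the monotonicity of $\g_k\d_k$; it is precisely this drift term, of order $\g_k\d_k/s_k$, that forces the hypothesis $\g_k\d_k/s_k^2\to0$. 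You instead track the distance to the \emph{fixed} solution, $u_k=\|x_k-x^*\|^2$, replacing the drift term by the gap $\epsilon_k=F_k(x^*)-F_k(x_k^*)\ge0$, which you bound by $L\|x^*-x_k^*\|+\g_k\d_k/(4\a_{\min})=O(\sqrt{\g_k\d_k})$ using~\eqref{eq:hfunineq1} and Lemma~\ref{lema-sols}. Both proofs share the same key ingredients (the uniform bound $\|\nabla h_{\d_k}(\cdot;a_i,b_i)\|\le1$ for the gradient-norm estimate, Lemma~\ref{lema-sols} for $\|x_k^*-x^*\|^2=O(\g_k\d_k)$, and Lemma~\ref{lema-polyak} to close), and your handling of the uniform subgradient constant $L$ via $\g_k\d_k\le\g_1\d_1$ and Lemma~\ref{lema-solsbded} is exactly what is needed. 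Your route is slightly cleaner: after dividing the residual by $1-q_k=\Theta(s_k)$ you only need $\sqrt{\g_k\d_k}+s_k+s_k\g_k^2\to0$, so the stated hypothesis $\g_k\d_k/s_k^2\to0$ enters only through its consequence $\g_k\d_k\to0$; in other words, your decomposition would in principle establish the conclusion under a weaker coupling between $\{\g_k\d_k\}$ and $\{s_k\}$ than the paper's.
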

\begin{proof}
For any $k\ge0$, for the iterates of the method we have
\[\|x_{k+1}- x_k^*\|^2 =\|x_k - x_k^*\|^2 -2s_k\la \nabla F_k(x_k),x_k-x_k^*\ra +s_k^2\|\nabla F_k(x_k)\|^2.\]
By the strong convexity of $F_k$ and the fact $\nabla F_k(x_k^*)=0$, it follows that
\begin{equation}
\label{eq-main1}
\|x_{k+1}- x_k^*\|^2 \le (1-2s_k\mu)\|x_k - x_k^*\|^2 +s_k^2\|\nabla F_k(x_k)\|^2.
\end{equation}
For $\|\nabla F_k(x_k)\|^2$
we write
\[\|\nabla F_k(x_k)\|^2
\le 2\|\nabla f(x_k)\|^2 +2\left\|\frac{\g_k}{m} \sum_{i=1}^{m}
 \nabla h_k\left(x_k^*; a_i, b_i\right) \right\|^2
 \le 2\|\nabla f(x_k)\|^2 +2\g_k^2,
\]
where the last inequality is obtained by using the convexity of the squared-norm function
and the fact that $\|\nabla h_k\left(x; a_i, b_i\right) \|\le 1$ for any $x$ (see~\eqref{eq:pderiv}).
We further estimate $\|\nabla f(x_k)\|^2$ as follows:
\[\|\nabla f(x_k)\|^2\le 2\|\nabla f(x_k)-\nabla f(x^*)\|^2 +2\|\nabla f(x^*)\|^2
\le 2L_f^2\|x_k-x^*\|^2 +2\|\nabla f(x^*)\|^2,\]
where in the last inequality we use the Lipschitz gradient property of $f$.
Thus,
\[\|\nabla F_k(x_k)\|^2\le 4L_f^2\|x_k-x^*\|^2 +4\|\nabla f(x^*)\|^2 +2\g_k^2.\]
Further, we have
\[\|x_k-x^*\|^2\le 2\|x_k-x_k^*\|^2 +2\|x_k^*-x^*\|^2,\] so that
\[\|\nabla F_k(x_k)\|^2\le 8L_f^2\|x_k-x_k^*\|^2+ 8L_f^2\|x_k^*-x^*\|^2 +4\|\nabla f(x^*)\|^2 +2\g_k^2.\]
For large enough $k$, by Lemma~\ref{lema-sols} we have
\begin{equation}\label{eq-estimk}
\|x_k^*-x^*\|^2\le \frac{\g_k\d_k}{4\mu\a_{\min}}.\end{equation}
By combining the preceding two relations with relation~\eqref{eq-main1}
we obtain for all large enough $k$,
\begin{align*}
\|x_{k+1}- x_k^*\|^2
\le &(1-2s_k\mu +8L_f^2 s_k^2)\|x_k - x_k^*\|^2 \cr
&+s_k^2\left(\frac{2L_f^2   \g_k\d_k}{\mu\a_{\min}}+4\|\nabla f(x^*)\|^2 +2\g_k^2\right).
\end{align*}
We next consider $\|x_{k+1}- x_{k+1}^*\|^2$ for which we write
\[\|x_{k+1}- x_{k+1}^*\|^2 \le (1+s_k\mu)\|x_{k+1}- x_k^*\|^2 +(1+s_k^{-1}\mu^{-1})\|x_k^* - x_{k+1}^*\|^2.\]
Combining the preceding two relations, we obtain
\begin{align}\label{eq-main2}
\|x_{k+1}- x_k^*\|^2
\le &(1+s_k\mu) (1-2s_k\mu +8L_f^2 s_k^2)\|x_k - x_k^*\|^2 \cr
&+(1+s_k\mu)s_k^2\left(\frac{2L_f^2   \g_k\d_k}{\mu\a_{\min}}+4\|\nabla f(x^*)\|^2 +2\g_k^2\right)\cr
&+ (1+s_k^{-1}\mu^{-1})\|x_k^* - x_{k+1}^*\|^2.
\end{align}

Next we write
\[\|x_k^* - x_{k+1}^*\|^2\le 2\|x_k^* -x^*\|^2 +2\|x^*- x_{k+1}^*\|^2
\le 2\frac{\g_k\d_k}{4\mu\a_{\min}} + 2\frac{\g_{k+1}\d_{k+1}}{4\mu\a_{\min}}
\le \frac{\g_k\d_k}{\mu\a_{\min}},\]
where we use estimate~\eqref{eq-estimk}(cf.\ Lemma~\ref{lema-sols}) and the assumption that
$\g_{k+1}\d_{k+1}\le \g_k\d_k.$
Using the preceding estimate in~\eqref{eq-main2}, we obtain for large enough $k$,
\begin{align}\label{eq-main3}
\|x_{k+1}- x_{k+1}^*\|^2
\le &(1+s_k\mu) (1-2s_k\mu +8L_f^2 s_k^2)\|x_k - x_k^*\|^2 \cr
&+(1+s_k\mu)s_k^2\left(\frac{2L_f^2   \g_k\d_k}{\mu\a_{\min}}+4\|\nabla f(x^*)\|^2 +2\g_k^2\right)\cr
&+ (1+s_k^{-1}\mu^{-1})\frac{\g_k\d_k}{\mu\a_{\min}}.
\end{align}
The rest of the proof is verifyng that Lemma~\ref{lema-polyak} can be applied to the
preceding inequality with the following identification,
\[u_k=\|x_k - x_k^*\|^2,\qquad
q_k=(1+s_k\mu) (1-2s_k\mu +8L_f^2 s_k^2),\]
\[r_k=(1+s_k\mu)s_k^2\left(\frac{2L_f^2   \g_k\d_k}{\mu\a_{\min}}+4\|\nabla f(x^*)\|^2 +2\g_k^2\right)
+ (1+s_k^{-1}\mu^{-1})\frac{\g_k\d_k}{\mu\a_{\min}}.\]

Consider the coefficient $q_k$, for which we have
\begin{align*}
q_k
&=1-2s_k\mu +8L_f ^2s_k^2
+s_k\mu -2s_k^2\mu^2+ 8L_f^2 \mu s_k^3\cr
&= 1- s_k\mu +(8L_f^2 -2\mu^2)s_k^2
+ 8L_f^2 \mu s_k^3\cr
&\ge 1- s_k\mu,\end{align*}
where in the last inequality we use $L_f\ge \mu$.
For large enough $k$, since $s_k\to0$, the coefficient $q_k$ is positive.
Also, since $s_k\to0$, we have for large enough $k$
\[q_k= 1- s_k\mu +(8L_f^2 -2\mu^2)s_k^2
+ 8L_f^2 \mu s_k^2\le 1-s_k\mu +16L_f^2 \mu s_k^2.\]
Hence, $s_k\ge 1-q_k\ge s_k\mu-16 L_f^2\mu s_k^2,$
implying that
\[\sum_{k=1}^\infty 1-q_k=+\infty,\]
since $\sum_{k=1}^\infty s_k=+\infty$ and $\sum_{k=1}^\infty s^2_k<+\infty$.
Moreover,
\[\frac{1}{1-q_k}\approx \Theta\left(\frac{1}{s_k}\right).\]
For the coefficient $r_k$, since $\g_k\d_k$ is nonincreasing, $s_k\to 0$, and $\g_k\to\infty$
we have for large enough $k$,
\[r_k\approx O(s_k^2\g_k^2) +O\left(\frac{\g_k\d_k}{s_k}\right).\]
From the preceding two relations it follows that
\[\frac{r_k}{1-q_k}\approx O(s_k\g_k^2) +O\left(\frac{\g_k\d_k}{s_k^2}\right).\]
In view of the conditions $s_k\g_k^2\to0$ and $\frac{\g_k\d_k}{s_k^2}\to0$,
it follows that
\[\lim_{k\to\infty}\frac{r_k}{1-q_k}=0.\]
Thus, all the conditions of Lemma~\ref{lema-polyak} are satisfied and we conclude that
\[\lim_{k\to\infty}\|x_k -x_k^*\|= 0.\]
Finally, since $\frac{\g_k\d_k}{s_k^2}\to0$ and $s_k\to0$, it follows that $\g_k\d_k\to0$,
which in view of Lemma~\ref{lema-sols}, implies that
\[\lim_{k\to\infty}\|x_k^* - x^*\|= 0.\]
The preceding two relations yield $\lim_{k\to\infty}\|x_k - x^*\|= 0$.
\end{proof}

We next discuss a possible choice of the penalty parameters and the stepsize
that satisfy the conditions of Proposition~\ref{prop:gradconv}.
Let stepsize be of the form
\[s_k=\frac{1}{k^{\frac{1}{2}+\e} }\qquad\hbox{with $0<\e \le \frac{1}{2}$}.\]
This stepsize satisfies the conditions $\sum_{k=1}^\infty s_k=\infty$ and
$\sum_{k=1}^\infty s_k^2<\infty$
of Proposition~\ref{prop:gradconv}.
Next, let
\[\g_k=\ln (k+1)\qquad\hbox{for all }k\ge 1.\]
Thus, we have $\g_k\to\infty$ as required in Proposition~\ref{prop:gradconv}.
Moreover, we have
\[\lim_{k\to\infty}s_k\g_k^2 =0.\]
It remains to choose $\d_k$ so that $\g_k\d_k$ is nonincreasing and
$\lim_{k\to\infty}\frac{\g_k\d_k}{s_k^2}=0.$
Let
\[\d_k=\frac{1}{k^b}\qquad\hbox{for all $k$ and some for $b>0$},\]
which ensures that $\g_k\d_k$ is nonincreasing.
We then have
\[\frac{\g_k\d_k}{s_k^2}=\ln(k+1) k^{1+2\e- b},\]
which would tend to 0, as long as $1+2\e- b<0$.

While the conditions on the penalty parameters and the stepsize choice in
Proposition~\ref{prop:gradconv} can be satisfied, it would be of interest to understand what
is the best convergence rate that can be achieved. If the method~\eqref{eq:gradmet} is implemented
in an incremental
manner, there will be additional error in the order of the squared stepsize value.
Thus, it is expected that the incremental implementation
would have the same rate as that of the method~\eqref{eq:gradmet}.
}

\section{Conclusion}\label{sec:concl}
In this paper, we provided a novel penalty re-formulation
for a convex minimization problem with linear constraints.
The structure of the penalty functions that we used to penalize the linear constraints,
and the suitable choices of the penalty parameters
render the penalized unconstrained
problem with solutions that are \emph{feasible} for the original constrained problem.
In addition, with an additional constraint on the penalty parameters imposed by a desired accuracy level,
the solutions of the penalized unconstrained
problem are guaranteed to be arbitrarily close to the solution set of the original problem.
An advantage of the proposed penalty reformulation is in the ability to
employ fast incremental gradient methods, such as SAGA.
\tat{However, this reformulation requires some apriori knowledge about some properties of the problem under consideration.
To rectify this issue, we proposed an alternative approach to set up a time-dependent penalized problem. We demonstrated that under an appropriate choice of the time-varying penalty parameters one can apply the single-loop gradient based procedure and achieve its convergence to the optimum given a strongly convex objective function.}

\section*{Acknowledgement}
The authors are thankful to the reviewers for providing valuable comments that have
helped us improve the paper substantially.
A.~Nedi\'c gratefully acknowledges a support of this work by the
Office of Naval Research grant no.\ N00014-12-1-0998.

 \bibliographystyle{abbrv}
\bibliography{Literature}

\end{document}